\newtheorem{theorem}{Theorem}[section]
\newtheorem{lemma}[theorem]{Lemma}
\newtheorem{claim}[theorem]{Claim}
\newtheorem{proposition}[theorem]{Proposition}
\newtheorem{corollary}[theorem]{Corollary}
\newtheorem{maintheorem}{Theorem}
\theoremstyle{definition}
\newtheorem{definition}[theorem]{Definition}
\newtheorem{example}[theorem]{Example}
\newtheorem{remark}[theorem]{Remark}
\newtheorem{observation}[theorem]{Observation}
\newtheorem{question}[theorem]{Question}
\newcommand{\yolk}{
\foreach \i in {0,...,5}{
            \draw[fill = white!30] (\i,5) rectangle (\i+1,6);
            \node at (\i+0.5,5.5) {$\mathtt{1}$};
            \draw[fill = white!30] (\i,0) rectangle (\i+1,1); 
            \node at (\i+0.5,0.5) {$\mathtt{1}$};
            \draw[fill = white!30] (5,\i) rectangle (6,\i+1); 
            \node at (5.5,\i+0.5) {$\mathtt{1}$};
            \draw[fill = white!30] (0,\i) rectangle (1,\i+1); 
            \node at (0.5,\i+0.5) {$\mathtt{1}$};
            }
            \foreach \i in {1,...,4}{
            \draw[fill = white!30] (\i,4) rectangle (\i+1,5); 
            \node at (\i+0.5,4.5) {$\mathtt{0}$};
            \draw[fill = white!30] (\i,1) rectangle (\i+1,2); 
            \node at (\i+0.5,1.5) {$\mathtt{0}$};
            \draw[fill = white!30] (4,\i) rectangle (5,\i+1); 
            \node at (4.5,\i+0.5) {$\mathtt{0}$};
            \draw[fill = white!30] (1,\i) rectangle (2,\i+1); 
            \node at (1.5,\i+0.5) {$\mathtt{0}$};
            }
}
\def\fix{\operatorname{Fix}}
\def\stab{\operatorname{Stab}}
\def \NN{\mathbb N}
\def \ZZ{\mathbb Z}
\def\N{\mathbb N}
\def\Z{\mathbb Z}
\newcommand{\diam}{\operatorname{diam}}
\newcommand{\define}[1]{\textbf{#1}}
\def\F{\mathcal F}
\def\aut{\operatorname{Aut}}
\def\Aut{\operatorname{Aut}}
\def\Id{\mathbf{1}}
\newcommand*\quot[2]{{^{\textstyle #1}\big/_{\textstyle #2}}}
\def\L{L}
\title{The automorphism group of a strongly irreducible subshift on a group}
\author{
	Sebasti\'an Barbieri, Nicanor Carrasco-Vargas and Paola Rivera-Burgos
}
\newcommand{\Addresses}{{
		\bigskip
		
		\hskip-\parindent   S.~Barbieri, \textsc{Departamento de Matem\'{a}tica y ciencia de la computaci\'{o}n, Universidad de Santiago de Chile, Santiago, Chile.}\par\nopagebreak
		\textit{E-mail address}: \texttt{sebastian.barbieri@usach.cl}
		
		\medskip
		
		\hskip-\parindent   N.~Carrasco-Vargas, \textsc{
        Mathematics faculty, Jagiellonian University, Krak\'ow, Poland.}\par\nopagebreak
		\textit{E-mail address}: \texttt{nicanor.vargas@uj.edu.pl}
		
		\medskip
		
		\hskip-\parindent   P.~Rivera-Burgos, \textsc{Departamento de Matem\'{a}tica y ciencia de la computaci\'{o}n, Universidad de Santiago de Chile, Santiago, Chile.}\par\nopagebreak
		\textit{E-mail address}: \texttt{paola.rivera.b@usach.cl}
}}
\begin{document}

\maketitle

\begin{abstract}
        We study the automorphism group $\operatorname{Aut}(X)$ of a non-trivial strongly irreducible subshift $X$ on an arbitrary infinite group $G$ and generalize classical results of Ryan, Kim and Roush.
        
        We generalize Ryan's theorem by showing that the center of $\operatorname{Aut}(X)$ is generated by shifts by elements of the center of $G$ modded out by the kernel of the shift action.

       We generalize Kim and Roush's theorem by showing that if the free group $F_k$ of rank $k\geq 1$ embeds into $G$, then the automorphism group of any full $F_k$-shift embeds into $\operatorname{Aut}(X)$. If $X$ is an SFT, or more generally, if $X$ satisfies the strong topological Markov property, then we can weaken the conditions on $G$. In this case we show that the automorphism group of any full $\mathbb{Z}$-shift embeds into $\operatorname{Aut}(X)$ provided $G$ is not locally finite, and that the automorphism group of any full $F_k$-shift embeds into $\operatorname{Aut}(X)$ whenever $G$ is nonamenable. Our results rely on a new marker lemma which is valid for any nonempty strongly irreducible subshift on an infinite group.

        We remark that our results are new even for  $G=\mathbb{Z}$ as they do not require the subshift to be an SFT. 

		\medskip
		
		\noindent
		\emph{Keywords:} Symbolic dynamics, automorphism groups, embeddings, strongly irreducible, markers.
		
		\smallskip
		
		\noindent
		\emph{MSC2020:} \textit{Primary:}
		37B10,  
        20B27. 
	\end{abstract}

\section{Introduction}

The study of the automorphism group of a full $\ZZ$-shift goes back to the work of Hedlund~\cite{Hedlund1969} in the sixties, where it was shown that this group is countable and contains copies of every finite group. A few years later Ryan \cite{Ryan1972} computed its center and proved that it is generated by the shift map. 


These and other initial results were generalized to non-trivial mixing $\ZZ$-subshifts of finite type (SFT) by Boyle, Lind and Rudolph~\cite{BoyleLindRudolph} through a refinement of the ``marker technique'' of Krieger~\cite{Krieger1982} (see also~\cite{Boyle93}). They showed that under these assumptions the automorphism group is residually finite and contains isomorphic copies of several kinds of groups, including non-abelian free groups and the direct sum of countably many copies of $\ZZ$. Kim and Roush~\cite{KimRoush1990} expanded on these results by employing the ``conveyor belt'' technique. They proved that the automorphism group of the full $\ZZ$-shift embeds into the automorphism group of any non-trivial mixing $\ZZ$-SFT.

A central  open problem in this context is that of distinguishing automorphism groups of full $\ZZ$-shifts on distinct alphabets: it is not known whether the automorphism groups of $\{0,1\}^\ZZ$ and $\{0,1,2\}^\ZZ$ are isomorphic. However, they embed into one another by the result of Kim and Roush, and thus they can not be distinguished by their subgroups. In contrast, Ryan's theorem can be used to distinguish the automorphism groups of some pairs of full $\ZZ$-shifts, such as $\{0,1\}^\ZZ$ and $\{0,1,2,3\}^\ZZ$ (see~\cite[Section 4]{BoyleLindRudolph} and \cite{hartman2022stabilized}).

For $\ZZ^d$-subshifts much less is known. One of the first results in this direction is due to Ward~\cite{Ward1994}, who proved that the automorphism group of any non-trivial strongly irreducible $\ZZ^d$-SFT contains isomorphic copies of any finite group. More recently, Hochman~\cite{Hochman2010} generalized Ward's result to $\ZZ^d$-SFTs with positive entropy. Furthermore, he showed that if such a subshift has dense periodic orbits, then the automorphism group of any full $\ZZ$-shift embeds into it. He also showed that the natural generalization of Ryan's theorem holds for transitive $\ZZ^d$-SFTs with positive entropy. 

The purpose of this work is to generalize the classical results of Ryan, Kim and Roush in two different directions. First, we consider strongly irreducible subshifts (also known as subshifts with strong specification) on arbitrary infinite groups, and second, we either remove the SFT hypothesis, or we replace it by a much weaker condition, the strong topological Markov property (strong TMP).


Our first result is a broad generalization of Ryan's theorem. For a group $G$, denote its center by $Z(G)$ and for a $G$-subshift $X$, denote by $\operatorname{Fix}(X) = \{ g \in G: gx=x \mbox{ for all } x \in X\}$ the kernel of the shift action.

\begin{maintheorem}\label{thm:Ryan}
    Let $G$ be an infinite group and $X$ be a nonempty strongly irreducible $G$-subshift. Then $Z(\Aut(X))$ is generated by shifts by elements $g\in G$ with $g\fix(X) \in Z(G/\fix(X))$. In particular \[ Z(\Aut(X)) \cong Z\left(\quot{G}{\operatorname{Fix}(X)}\right). \]
\end{maintheorem}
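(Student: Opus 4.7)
The plan is to establish the set equality $Z(\Aut(X)) = \{\sigma_g : g \in G,\ g\fix(X) \in Z(G/\fix(X))\}$ by proving the two inclusions; the isomorphism with $Z(G/\fix(X))$ stated in the ``in particular'' clause then follows, since the map $g \mapsto \sigma_g$ is a homomorphism from $G$ into $\Aut(X)$ with kernel exactly $\fix(X)$.

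The easy inclusion is essentially a definitional check. If $g\fix(X) \in Z(G/\fix(X))$, then $ghg^{-1}h^{-1} \in \fix(X)$ for every $h \in G$, which is precisely the condition for $\sigma_g \colon X \to X$ to be $G$-equivariant, so $\sigma_g \in \Aut(X)$. That $\sigma_g$ lies in the center is automatic, since every automorphism of a $G$-subshift commutes with the shift action by definition.

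For the hard inclusion, take $\psi \in Z(\Aut(X))$. By Curtis--Hedlund--Lyndon, $\psi$ is a sliding block code with a finite window $W \subseteq G$ and a local rule $\mu$ on admissible $W$-patterns, so that $\psi(x)(e) = \mu(x|_W)$. The strategy is to exploit commutation of $\psi$ with a rich family of marker-based automorphisms supplied by the new marker lemma. Concretely, I expect the marker lemma to produce, for any two admissible patterns $p,q$ of the same sufficiently large support $F \subseteq G$, an automorphism $\phi_{p,q} \in \Aut(X)$ that swaps $p$ with $q$ inside isolated marked regions of any configuration and acts as the identity outside; strong irreducibility is precisely what keeps such swaps admissible. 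Commutation of $\psi$ with enough $\phi_{p,q}$ should force $\mu(x|_W)$ to depend only on the value of $x$ at a single distinguished coordinate $g_0 \in W$. A second round of commutation, now with marker-based automorphisms that locally permute the alphabet at $g_0$, will then force $\mu$ to be the corresponding projection, giving $\psi(x)(e) = x(g_0)$. By $G$-equivariance this identity propagates to $\psi = \sigma_g$ for the element $g \in G$ determined by $g_0$, and since $\sigma_g \in \Aut(X)$ the easy direction run in reverse yields $g\fix(X) \in Z(G/\fix(X))$.

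The main obstacle is realizing the marker-based family $\{\phi_{p,q}\}$ and the local alphabet permutations at this level of generality. Classical marker constructions of Krieger, Boyle--Lind--Rudolph and Kim--Roush exploit either the SFT hypothesis, which makes admissibility a local condition, or the rigid geometry of $\Z^d$, which yields well-separated periodic marker arrays. Here $G$ is an arbitrary infinite group and $X$ is only strongly irreducible, so both crutches are absent. The paper's new marker lemma must therefore produce markers with enough separation that an admissible pattern inside a marked region can be freely replaced by any other admissible pattern of the same support without leaving $X$, and in enough abundance that the induced collection of automorphisms is large enough to pin any central $\psi$ down to a shift.
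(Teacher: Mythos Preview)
Your overall architecture is right (marker-swap automorphisms plus commutation), and the easy inclusion is essentially correct, though the justification is too quick: automorphisms commute with the \emph{left} shift action by definition, while $\tau_g$ is a \emph{right} shift. The point is that when $g\fix(X)\in Z(G/\fix(X))$ one has $\tau_g(x)=g^{-1}x$ on $X$, and then commutation with every $\varphi\in\Aut(X)$ follows from $G$-equivariance.

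For the hard inclusion, your proposed two-stage mechanism does not match what actually happens, and the second stage invokes automorphisms that generally do not exist. You want first to force $\mu(x|_W)$ to depend on a single coordinate $x(g_0)$, and then to use ``marker-based automorphisms that locally permute the alphabet at $g_0$'' to force $\mu$ to be the projection. In a general strongly irreducible subshift there is no way to permute a single symbol at one position and stay in $X$; the only automorphisms the marker lemma hands you permute entire marked \emph{patterns}, not individual letters. Relatedly, it is unclear how commuting with pattern-swaps would, by itself, isolate a single coordinate $g_0$ as an intermediate step.

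The paper's route avoids both issues by working entirely at the level of \emph{egg markers}: a finite set $\mathcal{E}$ of $(Y,W)$-patterns sharing a common marker boundary and pairwise exchangeable yolks, chosen so that $\{q|_F:q\in\mathcal{E}\}=L_F(X)$ for a window $F$ of the given $\varphi\in Z(\Aut(X))$. Every $\sigma\in\operatorname{Sym}(\mathcal{E})$ yields an automorphism $\phi_\sigma\in\Aut(X)$. Commutation of $\varphi$ with all $\phi_\sigma$ is used to show three things in sequence: (i) for each $q\in\mathcal{E}$ there is a well-defined coset $s(q)\in G/\fix(X)$ represented in $Y$ and a pattern $\pi(q)\in\mathcal{E}$ such that $s(q)\varphi(x)\in[\pi(q)]$ for all $x\in[q]$; (ii) the induced permutation $\pi$ commutes with every $\sigma$, hence lies in $Z(\operatorname{Sym}(\mathcal{E}))$, which is trivial since $|\mathcal{E}|\geq 3$, so $\pi=\operatorname{id}$; (iii) using a single transposition, the shift $s$ is constant, say equal to $g\fix(X)$. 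Since $\mathcal{E}$ realizes $L_F(X)$ this gives $g\varphi(x)=x$ for all $x$, i.e.\ $\varphi=\tau_g$. The algebraic crux you are missing is the use of the triviality of the center of $\operatorname{Sym}_n$ for $n\geq3$; this is what replaces your nonexistent ``alphabet permutation at $g_0$'' step.
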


We note that~\Cref{thm:Ryan} fails in the case where $G$ is a finite group. See~\Cref{ex:finite_ryan}. In the case where $G$ has no non-trivial normal finite subgroups, for instance, if $G$ is torsion-free or if $G$ is infinite and simple, then the shift action on a non-trivial strongly irreducible subshift $X$ is faithful (\Cref{lem:if-X-SI-then-Fix-contained-in-K}) and thus the conclusion of~\Cref{thm:Ryan} can be read simply as $Z(\Aut(X)) \cong Z(G)$.

A direct consequence of~\Cref{thm:Ryan} is that whenever $G,H$ are groups with non-isomorphic centers, then $\Aut(\{0,1\}^{G})$ and $\Aut(\{0,1\}^{H})$ are not isomorphic (see~\Cref{cor:hochmananswer} for an even more general statement). In particular, this answers negatively a question of Hochman~\cite{Hochquestion} on whether $\Aut(\{0,1\}^{\ZZ^n})$ is isomorphic to $\Aut(\{0,1\}^{\ZZ^m})$ for distinct $n,m$.

As mentioned before, one of the most famous applications of Ryan's theorem is a proof that $\Aut(\{0,1\}^{\ZZ})$ and $\Aut(\{0,1,2,3\}^{\ZZ})$ are non-isomorphic~\cite{BoyleLindRudolph}. This technique has been generalized to distinguish automorphism groups where the alphabets are different powers of primes using the dimension representation ~\cite[Theorem 2.5]{hartman2022stabilized}. Using~\Cref{thm:Ryan} along with a technique of Lind~\cite[Theorem 8]{Lind1984} and Lind's classification of topological entropies of mixing $\ZZ$-SFTs, we obtain the following general result.

\begin{maintheorem}\label{thm:roots_of_shift_ingroups}
    Let $G$ be an infinite group and suppose there exists an epimorphism $\psi\colon G \to \ZZ$ such that $\psi(Z(G))= \ZZ$. For every integer $n\geq 2$ and positive integers $k,\ell$ we have \[ \Aut( \{1,\dots,n^k\}^G) \cong \Aut( \{1,\dots,n^{\ell}\}^G) \mbox{ if and only if } k=\ell.   \]
\end{maintheorem}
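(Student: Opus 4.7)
The ``if'' direction is immediate, so I focus on the converse. Writing $X_k = \{1,\dots,n^k\}^G$ and letting $d(n)$ denote the largest integer $d$ such that $n = a^d$ for some positive integer $a$, the plan is to show that the supremum of integers $m \geq 1$ for which a specific central element $\sigma_{c_0}$ is an $m$-th power inside $\Aut(X_k)$ equals $kd(n)$. Since this supremum is a group-theoretic invariant of $\sigma_{c_0}$, a symmetric argument then forces $k = \ell$ from any hypothetical isomorphism $\Phi \colon \Aut(X_k) \to \Aut(X_\ell)$. Because $n \geq 2$, a direct check shows $\fix(X_k) = \{1_G\}$, so \Cref{thm:Ryan} gives $Z(\Aut(X_k)) = \{\sigma_c : c \in Z(G)\} \cong Z(G)$; I would pick $c_0 \in Z(G)$ with $\psi(c_0) = 1$, which exists because $\psi(Z(G)) = \ZZ$.

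For the lower bound $kd(n)$, I would write $n^k = a^{kd(n)}$ and identify $\{1,\dots,n^k\}$ with $B^{kd(n)}$, where $|B| = a$, so a configuration in $X_k$ becomes a map $G \times \{0,\dots,kd(n)-1\} \to B$. Define a cellular automaton $\tau$ by rotating the auxiliary coordinate cyclically, with the wrap-around acting by $\sigma_{c_0}$ at the top index. A direct calculation gives $\tau^{kd(n)} = \sigma_{c_0}$, and the centrality of $c_0$ is exactly what is required to make $\tau$ commute with all left shifts of $G$. The same construction works verbatim with any $c \in Z(G)$ in place of $c_0$.

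For the upper bound, assume $\tau \in \Aut(X_k)$ satisfies $\tau^m = \sigma_{c_0}$. Let $H = \ker \psi \trianglelefteq G$ and consider the closed subset
\[ Y = \{x \in X_k : \sigma_h x = x \text{ for every } h \in H\}, \]
consisting of configurations constant on left cosets of $H$. Then $Y$ is $G$-invariant and naturally homeomorphic to $\{1,\dots,n^k\}^{\ZZ}$ in such a way that the residual action of $G/H \cong \ZZ$ corresponds to the standard $\ZZ$-shift. Since $\tau$ commutes with every $\sigma_h$ for $h \in H$, it preserves $Y$, and the transported restriction $\tilde\tau$ lies in $\Aut(\{1,\dots,n^k\}^{\ZZ})$ and satisfies $\tilde\tau^m = \sigma$, the standard shift by $\psi(c_0) = 1$. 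The combination of~\cite[Theorem~8]{Lind1984} with Lind's classification of entropies of mixing $\ZZ$-SFTs as logarithms of Perron numbers then forces $n^k$ to be a perfect $m$-th power, i.e.\ $m \mid kd(n)$.

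Together the two bounds yield $\sup\{m \geq 1 : \sigma_{c_0} = w^m \text{ for some } w \in \Aut(X_k)\} = kd(n)$, and the same computation in $\Aut(X_\ell)$ gives $\ell d(n)$. Given $\Phi$ as above, its restriction to centers combined with \Cref{thm:Ryan} forces $\Phi(\sigma_{c_0}) = \sigma_{c'}$ for some $c' \in Z(G)$, and invariance of this supremum under $\Phi$ gives $kd(n) \geq \ell d(n)$, where the right-hand inequality comes from the lower-bound construction applied to $c'$. Hence $k \geq \ell$, and applying the same reasoning to $\Phi^{-1}$ yields $\ell \geq k$, so $k = \ell$. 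I expect the main obstacle to be the upper-bound step: one must verify cleanly that $\tilde\tau$ is a sliding block code (not merely a homeomorphism) and that the identification $Y \cong \{1,\dots,n^k\}^{\ZZ}$ sends $\sigma_{c_0}|_Y$ to the standard shift rather than to a shift by some other integer, after which the invocation of Lind's classification is formal.
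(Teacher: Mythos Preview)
Your proposal is correct and follows essentially the same route as the paper: the slow-shift construction for the lower bound, the restriction to $\ker\psi$-invariant configurations combined with Lind's Perron-number classification for the upper bound, and Ryan's theorem to identify the center. The only organizational difference is that the paper packages the invariant as the set $\mathcal{R}(X)=\{m: \text{every }\tau_g,\ g\in Z(G),\text{ has an }m\text{-th root}\}$ (which is manifestly preserved by any isomorphism of automorphism groups), whereas you track the supremum of root orders for a single element $\sigma_{c_0}$ with $\psi(c_0)=1$ and then use the lower bound for the image element $\sigma_{c'}$; both lead to the same conclusion $kd(n)=\ell d(n)$.
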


We remark that the condition on~\Cref{thm:roots_of_shift_ingroups} is satisfied by any group which is of the form $G = H \times \ZZ$ for some group $H$. In particular, it holds for any infinite and finitely generated abelian group.

Next we provide two generalizations of the result of Kim and Roush.

\begin{maintheorem}\label{thm:EmbeddingZ}
    Let $G$ be a group and $X$ be a non-trivial strongly irreducible $G$-subshift. For every finite alphabet $A$: \begin{enumerate}[$\mathrm{(}$1$\mathrm{)}$]
        \item If $G$ admits a torsion-free element, then $\Aut(A^{\ZZ})$ embeds into $\Aut(X)$.
        \item If $F_2$ embeds into $G$, then $\Aut(A^{F_k})$ embeds into $\Aut(X)$ for every $k \geq 1$.
    \end{enumerate}
\end{maintheorem}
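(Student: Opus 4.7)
The plan is to generalize the conveyor-belt technique of Kim and Roush. In the classical proof for a mixing $\ZZ$-SFT, markers cut long $\ZZ$-orbits into finite arcs (``belts''); a block code of a full $\ZZ$-shift is then applied belt by belt, and the SFT condition guarantees that the result is still in $X$. I would replace the SFT gluing step by the strong irreducibility of $X$, and replace the explicit $\ZZ$-markers by the marker lemma advertised in the introduction, which is valid in any nonempty strongly irreducible $G$-subshift.

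For part (1), fix a torsion-free $g\in G$ and $\varphi\in\Aut(A^{\ZZ})$ realized by a block code of radius $r$. Applying the marker lemma, I obtain a clopen $M\subseteq X$ and an integer $N\gg r+K$, where $K$ is the strong irreducibility constant of $X$, such that for every $x\in X$ the set $\{n\in\ZZ : g^{n}x\in M\}$ is bi-infinite with consecutive gaps bounded above and below in terms of $N$. Applied to each right coset $\langle g\rangle h$ (through the translate $hx$), this cuts the restriction $x|_{\langle g\rangle h}$ into finite belts. On each belt of length $\ell$ I apply the standard conveyor-belt procedure: form the periodic word of period $2\ell$ obtained by concatenating the belt with its reverse, view it as a point of $A^{\ZZ}$, apply $\varphi$, and overwrite the belt with the first $\ell$ symbols of the output. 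Because $\ell>2r$, the edit on a belt depends only on that belt; because $\ell>r+K$, strong irreducibility lets us keep a collar around each marker fixed while freely editing the interior, so the globally edited configuration still lies in $X$. The resulting map $\Phi_{\varphi}\colon X\to X$ is continuous (as $M$ is clopen and the edit uses bounded information), $G$-equivariant (as the marker set is shift-equivariant), a homomorphism in $\varphi$ (as the conveyor-belt encoding respects composition), and injective (as $\varphi^{-1}$ produces the inverse map $\Phi_{\varphi^{-1}}$).

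Part (2) follows by the same strategy once the linear belts are replaced by finite subtrees of the Cayley graph of $F_{k}$ for $k\geq 2$ (the case $k=1$ is already covered by (1), using a torsion-free element of $F_{2}\hookrightarrow G$). I would fix an embedding $F_{k}\hookrightarrow F_{2}\hookrightarrow G$ and use the marker lemma to obtain a clopen $M\subset X$ whose $M$-free regions along each right $F_{k}$-coset are finite rooted subtrees of the Cayley graph of $F_{k}$ of depth exceeding $r+K$. Each such rooted subtree is canonically identified with a finite subset of $F_{k}$, so the block code of any $\varphi\in\Aut(A^{F_{k}})$ can be applied inside it, and the edited patch pastes back into $X$ by strong irreducibility, again yielding an injective homomorphism $\Aut(A^{F_{k}})\hookrightarrow\Aut(X)$. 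The main obstacle is the marker lemma itself: it must deliver clopen, shift-equivariant markers with both upper and lower control on the gaps along an arbitrary $\langle g\rangle$-orbit or $F_{k}$-coset in the purely strongly irreducible (non-SFT) setting. Once this is in hand, the conveyor-belt and tree edits, together with their verification via strong irreducibility, become routine.
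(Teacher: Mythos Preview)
Your proposal has a genuine gap at the ``pasting back'' step, and it is not a technicality: it is precisely the difficulty that separates the strongly irreducible case from the SFT case.

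You write that ``strong irreducibility lets us keep a collar around each marker fixed while freely editing the interior, so the globally edited configuration still lies in $X$.'' This is false. Strong irreducibility says only that for $K$-separated supports $S,T$ and patterns $p\in L_S(X)$, $q\in L_T(X)$, \emph{there exists some} $x\in X$ with $x|_S=p$ and $x|_T=q$. It does \emph{not} say that if $x\in X$ and you replace $x|_S$ by another pattern $p$ while keeping $x|_{G\smallsetminus S}$ unchanged, the result is still in $X$. That stronger statement is exactly the strong TMP (or SFT) hypothesis that the theorem is supposed to avoid. In your scheme, after the conveyor-belt edit on a belt you need the new belt content glued to the \emph{specific} exterior $x|_{G\smallsetminus(\text{belt})}$ to lie in $X$, and strong irreducibility gives you no control over this.

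The paper circumvents this with a device you did not anticipate: \emph{egg markers}. One first finds, via a minimality argument on extension sets (Lemma~\ref{lem:existence_egg_markers_general}), a finite collection $\mathcal E\subset L_W(X)$ of patterns sharing a common $(Y,W)$-marker ``shell'' and satisfying a \emph{pairwise exchangeable} property: for any $x\in X$, swapping one element of $\mathcal E$ for another at a location where it occurs keeps you in $X$. This exchangeability is not a consequence of strong irreducibility alone; it comes from choosing the shell pattern to have minimal extension set, so that no further outer context can shrink the set of admissible yolks. With this in hand, the paper does not cut cosets into finite belts at all. Instead, the occurrences of egg markers in $x$ serve as the alphabet symbols themselves (via the egg model $\eta_{\mathcal E}$), and the conveyor belt is run on the induced $F_k$-action on these occurrence sites using a fixed spread-out copy $\langle\gamma_1,\dots,\gamma_k\rangle\cong F_k$ inside $G$ (Lemma~\ref{lem:fat_free_group}). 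All edits are swaps within $\mathcal E$, so membership in $X$ is automatic by exchangeability (Lemma~\ref{lem:lema_maestro_de_los_reemplzos}).

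A secondary issue: the marker lemma in the paper (Theorem~\ref{thm:markers}) produces $(Y,W)$-markers, i.e.\ patterns with prescribed non-overlap properties. It does not produce a clopen $M$ with syndetic, bounded-gap return times along $\langle g\rangle$-orbits as you assume; such a Krieger-style marker lemma would again need SFT-type hypotheses.
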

In the case $G=\ZZ$ it follows from this result that $\Aut(A^{\ZZ})$ embeds into $\Aut(X)$ for every nontrivial strongly irreducible $\ZZ$-subshift $X$.  Since every mixing $\ZZ$-SFT is in particular strongly irreducible, we recover and strengthen the original result of Kim and Roush. 

In the case $G=\ZZ^d$, $d>1$, the conclusion of our result is again that $\Aut(A^{\ZZ})$ embeds into $\Aut(X)$ for every nontrivial strongly irreducible $\ZZ^d$-subshift $X$. In this case our result is a priori not comparable with Hochman's result \cite[Theorem 1.3]{Hochman2010}, which shows that $\Aut(A^\ZZ)$ embeds into $\Aut(X)$ whenever $X$ is a $\ZZ^d$-SFT with positive entropy and a dense set of periodic points. To our knowledge, it is still unknown whether there exist strongly irreducible $\ZZ^d$-SFTs without dense periodic orbits for $d \geq 3$.


For many groups, such as infinite torsion groups and counterexamples to the Von Neumann conjecture, the hypotheses of~\Cref{thm:EmbeddingZ} do not hold. In this setting, we can establish an embedding result under the additional hypothesis of the strong TMP.

\begin{maintheorem}\label{thm:EmbeddingF2}
    Let $G$ be a group and $X$ be a non-trivial strongly irreducible $G$-subshift which satisfies the strong TMP. For every finite alphabet $A$: \begin{enumerate}[$\mathrm{(}$1$\mathrm{)}$]
        \item If $G$ is not locally finite, then $\Aut(A^{\ZZ})$ embeds into $\Aut(X)$.
        \item If $G$ is nonamenable, then $\Aut(A^{F_k})$ embeds into $\Aut(X)$ for every $k \geq 1$.
    \end{enumerate}
\end{maintheorem}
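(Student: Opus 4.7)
The plan is to carry out a Kim--Roush--style ``conveyor belt'' construction in this general setting, using the new marker lemma (valid in every nonempty strongly irreducible $G$-subshift) and the strong TMP as the licence to perform local surgery on configurations in $X$. Concretely, for $\phi$ an automorphism of the target full shift, I will define $\Phi_\phi \in \Aut(X)$ by locating, equivariantly in each $x\in X$, a sparse set of marker positions and then applying $\phi$ to a fixed ``conveyor belt'' anchored at each marker, with the strong TMP guaranteeing that the modified configuration remains in $X$.

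Let $M$ denote a memory set for the strong TMP. Fix a finite $F \subseteq G$ (chosen large later) and use the marker lemma to produce, for every $x \in X$, a $G$-equivariant subset $T(x) \subseteq G$ whose translates $\{tMF : t \in T(x)\}$ are pairwise disjoint. Strong TMP then guarantees that on each $tF$ one may freely replace $x|_{tF}$ by any pattern compatible with the boundary $x|_{tMF\setminus tF}$, and strong irreducibility guarantees the required patterns actually appear. The map $\Phi_\phi$ will act as the identity outside the free regions $\bigcup_{t\in T(x)} tF$, and will simulate $\phi$ on a distinguished conveyor belt inside each region.

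For part (1), because $G$ is not locally finite, it contains a finite symmetric $S$ with $\langle S\rangle$ infinite, so the infinite locally finite Cayley graph $\cay(\langle S\rangle, S)$ carries self-avoiding paths of arbitrary length. Given $\phi \in \Aut(A^{\ZZ})$ realised by a local rule of radius $r$, fix a path $\gamma = (e, g_1, \ldots, g_{N-1})$ of length $N \gg r$ in this Cayley graph, and choose $F$ so that $MF$ contains $\gamma$ together with a buffer of width $r$. For each $t \in T(x)$, interpret $x|_{t\gamma}$ as a word in $A^N$, apply the local rule of $\phi$, and substitute the new word back at $t\gamma$ using strong TMP. Equivariance of $T$ makes $\Phi_\phi$ commute with the $G$-action, strong TMP yields $\Phi_\phi(x) \in X$, and invertibility of $\phi$ gives $\Phi_{\phi^{-1}} = \Phi_\phi^{-1}$. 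Injectivity of $\phi \mapsto \Phi_\phi$ follows from strong irreducibility: for any $\phi \neq \mathrm{id}$, any word $w$ distinguishing $\phi$ from the identity can be realised as the content of some conveyor belt $t\gamma$ inside a suitable $x\in X$.

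For part (2), nonamenability of $G$ provides, via a paradoxical-decomposition or Hall-marriage argument applied to its Cayley graph (in the spirit of Benjamini--Schramm--Woess and Whyte), embedded copies of arbitrarily large balls of the $2k$-regular tree, which is the Cayley graph of $F_k$. Fixing such a tree ball $\gamma$ of depth $N \gg r$ and using it as the conveyor belt template, the argument above transposes to produce an embedding $\Aut(A^{F_k}) \embed \Aut(X)$. The main obstacle in both parts is making the internal wiring of the conveyor belt depend only on the abstract group (so that $\Phi_\phi$ really commutes with shifts), and additionally, for part (2), extracting the $F_k$-tree embedding from nonamenability in a way that is uniform across all marker positions. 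Once a single template $\gamma$ is fixed in the abstract group and $T$ is supplied by the marker lemma, the construction $x \mapsto \Phi_\phi(x)$ is automatically $G$-equivariant, and the deepest point therefore lies in the tree-extraction step that converts nonamenability into the combinatorial structure needed by $\phi$.
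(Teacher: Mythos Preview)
Your outline has a genuine gap: a single fixed finite template $\gamma$ in the group cannot carry an injective homomorphism from $\Aut(A^{\ZZ})$ (or $\Aut(A^{F_k})$) into $\Aut(X)$.

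First, as written the construction is not even a homomorphism. You choose the path length $N$ depending on the radius $r$ of $\phi$, so different automorphisms get different templates; then $\Phi_{\phi_1}\circ\Phi_{\phi_2}$ and $\Phi_{\phi_1\circ\phi_2}$ are built on incompatible belts. If instead you fix $N$ once and for all and wrap the belt with two tracks (which you do not mention but is essential for handling the endpoints), the map $\phi\mapsto\Phi_\phi$ becomes a homomorphism, but it is \emph{not injective}: the only $\ZZ$-configurations your belts ever realise are $2N$-periodic, and distinct elements of $\Aut(A^{\ZZ})$ can agree on all periodic points of a given period. Your injectivity sentence (``any word $w$ distinguishing $\phi$ from the identity can be realised as the content of some conveyor belt'') is exactly where this fails. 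A secondary issue is that $x|_{t\gamma}$ is a pattern over the alphabet of $X$, not a word in $A^N$; you never explain the coding that identifies patterns in $X$ with symbols over $A$.

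The paper's proof avoids all of this by \emph{not} fixing a template in the group. Instead, the egg markers at each position store both a tape symbol and a tuple of direction pointers in a ball $D=B((296r+7)N)$, and the conveyor belt follows these pointers, so its geometry is read off from $x$ and can be arbitrarily long. This is precisely why the explicit linear bound in the marker lemma (markers of shape $(B(r),B(37r))$) is needed: one must check that $|L_{B(r)}(X)|$ is large enough to encode both the $A$-symbols \emph{and} the direction alphabet $D^{2k}$, and this comparison only closes because the marker support grows linearly in $r$ while $|L_{B(r)}(X)|$ grows superlinearly. Injectivity is then obtained by exhibiting a single $x$ in which the pointers trace out a genuine bi-infinite geodesic (case 1) or an inflated copy of $F_k$ produced from Whyte's translation-like $F_2$-action plus a Hall-matching ``fattening'' lemma (case 2). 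Your sketch identifies the right combinatorial ingredients for part~(2), but the decisive idea you are missing is that the belt must be configuration-dependent rather than a fixed subgraph of $G$.
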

The condition that $G$ is not locally finite is necessary.  Indeed, a direct consequence of the Curtis-Hedlund-Lyndon theorem is that in this case every element of $\Aut(A^{G})$ has torsion, and therefore neither $\ZZ$ nor $\Aut(A^{\ZZ})$ embed into $\Aut(A^{G})$ for $|A|\geq 2$. In fact, the automorphism group $\Aut(A^{G})$ is locally finite, see~\cite{raymond2023shifts}. Regarding the second statement, we do not know if the nonamenability condition is necessary. In fact, we do not know whether $\Aut(\{0,1\}^{F_2})$ embeds into $\Aut(\{0,1\}^{\ZZ})$. 

Next we highlight two particular cases of \Cref{thm:EmbeddingF2}. Consider two finite alphabets $A,B$ with cardinality at least $2$. First, $\Aut(A^{\ZZ})$ embeds into $\Aut(B^G)$ for any non-locally finite group $G$. In particular, all of the subgroup realization results for $\Aut(A^\ZZ)$, such as those of Boyle, Lind and Rudolph~\cite{BoyleLindRudolph}, are also valid for $\Aut(B^G)$ whenever $G$ is not locally finite.  Second, $\Aut(A^{F_k})$ embeds into $\Aut(B^{F_d})$ for every $d,k\geq 2$. In particular, the subgroup structure of the automorphism group of a non-trivial full shift on a non-abelian free group is universal, that is, it does not depend upon the alphabet or the rank.

We remark that our methods provide a direct proof of the following: if $G$ is an infinite group and $X$ is a nontrivial strongly irreducible $G$-subshift, then every finite group embeds into $\Aut(X)$ (\Cref{cor:generalized_ward}). This strengthens Ward's result~\cite{Ward1994} about strongly irreducible SFTs on $\ZZ^d$. Notice that this result is not a direct consequence of~\Cref{thm:EmbeddingF2}, as it is valid even on infinite locally finite groups.




The proofs of Theorems~\ref{thm:Ryan}, \ref{thm:EmbeddingZ} and \ref{thm:EmbeddingF2} rely on a generalized marker lemma which is valid for any nonempty strongly irreducible subshift on an infinite group. We remark that ``marker lemmas'' for general groups have been established in many instances in the literature, but they often mean different things, see for instance~\cite{McGoffPavlov2021,meyerovitch2024embeddingtheoremmultidimensionalsubshifts,Hochman2010}. Our marker lemma is in spirit a generalization of the notion used in~\cite{Ward1994,BarMey2023,Hochman2010}, but we factor into our definition the possibility that the shift action might not be faithful.

Let us be more precise: let $G$ be a group, $F$ a finite subset of $G$ and $g \in G$. A pattern $p$ with support $F$ is said to have a $g$-overlap if $p$ and its translate by $g$ coincide in $F\cap gF$. Given a $G$-subshift $X$ and two finite subsets $Y\subset W$ of $G$, a $(Y,W)$-{marker} for $X$ is a pattern $p$ which occurs in $X$, has support $W\smallsetminus Y$, and it does not have $g$-overlaps for $g\in WY^{-1}\smallsetminus \operatorname{Fix}(X)$. We say that $X$ is \define{markable} when for every finite $Y\subset G$  there exists a finite $W\subset G$ and a $(Y,W)$-marker for $X$. 

\begin{maintheorem}\label{thm:markers}
Let $G$ be an infinite group. Every nonempty strongly irreducible $G$-subshift is markable. Furthermore, if $G$ is finitely generated and endowed with a word metric, then $X$ admits a $(B(r),B(37r))$-marker for all large enough $r$. 
\end{maintheorem}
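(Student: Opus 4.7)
First, I would handle the trivial case. If $|X|=1$, then $X$ consists of a single constant configuration, so $\operatorname{Fix}(X)=G$, and the marker condition for any $W\supseteq Y$ is vacuous: any pattern in $X$ with support $W\smallsetminus Y$ is a marker. From now on assume $|X|\geq 2$.

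The strategy is to build an exponentially large family of candidate patterns on the annular support $W\smallsetminus Y$ and conclude by a union bound. Using $|X|\geq 2$, I extract two letters $a\neq b\in A$ that both appear at some common position in configurations of $X$. With strong irreducibility constant $K\subseteq G$, for any $K$-separated collection of ``sites'' $\{h_1,\dots,h_N\}\subseteq W\smallsetminus Y$ and any assignment $\epsilon\in\{a,b\}^N$, strong irreducibility yields a pattern $p_\epsilon$ in $X$ supported on $W\smallsetminus Y$ with $p_\epsilon(h_i)=\epsilon_i$, producing $2^N$ candidates. For each $g\in WY^{-1}\smallsetminus\operatorname{Fix}(X)$ I would bound the number of candidates with a $g$-overlap, i.e., those satisfying $p_\epsilon(h)=p_\epsilon(g^{-1}h)$ on $(W\smallsetminus Y)\cap g(W\smallsetminus Y)$: whenever $g^{-1}h_i$ equals another site $h_j$, the overlap forces $\epsilon_i=\epsilon_j$ and halves the survivors. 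Arranging the sites so that each bad $g$ produces more than $\log_2|WY^{-1}|$ such forced equalities (or a disagreement witness) makes the union bound succeed, leaving at least one $\epsilon$ whose $p_\epsilon$ is a marker. For the first assertion of the theorem it then suffices to take $W$ large enough.

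For the quantitative statement take $Y=B(r)$ and $W=B(37r)$: one has $|WY^{-1}|\leq|B(38r)|$, while the annulus $B(37r)\smallsetminus B(r)$ admits $K$-separated sites of cardinality at least proportional to $|B(37r)|/|B(K)|$, so $2^N$ greatly exceeds $|B(38r)|$ for large $r$. The explicit constant $37$ is presumably forced by the geometric requirements that the annulus contain enough sites, that every $g\in B(38r)$ produces enough matched site pairs, and that $g$-translated sites remain in $W\smallsetminus Y$. The main obstacle is the uniform per-$g$ counting step: ensuring that every $g\in WY^{-1}\smallsetminus\operatorname{Fix}(X)$ produces enough forced equalities for the union bound, and handling the geometrically awkward $g$ whose translates of sites land outside all preselected sites. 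A likely remedy is to place auxiliary sites at multiple scales, or to exploit $g\notin\operatorname{Fix}(X)$ via strong irreducibility to force an independent disagreement.
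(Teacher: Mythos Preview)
Your counting/union-bound idea captures exactly one third of the paper's argument, namely the ``long range'' case; the gap you yourself flag is real and is precisely why the paper needs two additional constructions.

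Concretely: for your union bound to succeed you need, for every $g\in WY^{-1}\smallsetminus\operatorname{Fix}(X)$, many indices $i$ with $g^{-1}h_i=h_j$ for some other site $h_j$. But for small $g$ this simply fails. Take $G=\ZZ^2$, sites on a coarse lattice $L\ZZ^2\cap(W\smallsetminus Y)$, and $g=(1,0)$: no translate $h_i-g$ lands on another site, so your bound on the number of $\epsilon$ with a $g$-overlap is the trivial $2^N$, and the union bound is useless for this $g$. The same phenomenon occurs for every $g$ that is short relative to the site spacing, and in torsion groups there may be no choice of sites that avoids it. Your suggested remedies (multi-scale sites, an ``independent disagreement'') are in the right spirit but are not close to a proof.

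The paper resolves this by splitting $WY^{-1}\smallsetminus\operatorname{Fix}(X)$ into three ranges and using a different mechanism in each. For $|g|\in(2r,38r]$ it runs a counting argument much like yours, but counting directly over $L_{B(6r,37r)}(X)$ rather than over a $2^N$-family: for each such $g$ one finds a ball $hB(r)\subset B(6r,37r)\cap gB(6r,37r)$ with $g^{-1}hB(r)$ disjoint from $hB(r)$, so the values on $hB(r)$ are determined by the rest, giving a $|L_{B(r-k)}(X)|^{-1}$ saving that beats $|B(38r)|$. For $g\in B(2r)\smallsetminus K^3$ there is no counting; instead one builds two explicit patterns $p_1,p_2$ on $B(2r)$ (a fixed pattern on $K^2$ surrounded by a maximal $K$-packing carrying symbol $a_1$ or $a_2$) so that for each such $g$ at least one $p_i$ is non-$g$-overlapping, and then places translates of both inside the annulus. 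For $g\in K^3\smallsetminus\operatorname{Fix}(X)$ one handles each of the finitely many such $g$ individually: pick $x$ with $x(\Id_G)\neq x(g)$ and glue together these finitely many witnesses. The three pieces are then assembled by strong irreducibility into a single $(B(r),B(37r))$-marker.

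Finally, your outline does not address the case where $G$ is not finitely generated. The paper reduces this either to the finitely generated case (when $G$ has an infinite finitely generated subgroup, via restriction $X|_H$) or, for locally finite $G$, runs the finite-group version of the argument along a carefully chosen exhausting sequence of finite subgroups with controlled diameter growth.
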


One can think about a $(Y,W)$-marker $p$ as a sort of ``protective bubble'' around $Y$. More precisely, if $p$ occurs in some configuration $x$, and a shifted copy of $p$ also occurs at some $g \in G$, then, unless $g\in \fix(X)$, the support of $p$ must be disjoint from $gY$ . That is, $gY \cap W = \varnothing$. We illustrate this in~\Cref{fig:buevitos}.

We remark that the proof of Theorems~\ref{thm:Ryan} and~\ref{thm:EmbeddingZ} use the fact that every nonempty strongly irreducible $G$-subshift is markable, but do not require the explicit bound in the second part of the statement. This bound is crucial for the proof of~\Cref{thm:EmbeddingF2}.

This paper is organized as follows. In~\Cref{sec:prelim} we introduce general notation and notions on symbolic dynamics such the strong TMP. We also prove the necessary fundamental results on strongly irreducible subshifts and automorphism groups. In~\Cref{sec:markers} we introduce our notion of markers and show~\Cref{thm:markers}. Next, in~\Cref{sec:buebito}, we use the notion of markers to construct a collection of patterns which we call ``egg markers''. These are collections of patterns that are built around a common marker, but also carry some local information (``yolk'') which can be freely exchanged. These collections of egg markers are the main tool used to construct automorphisms in a strongly irreducible subshift. In~\Cref{sec:ryan} we give the proof of~\Cref{thm:Ryan} and its main application,~\Cref{thm:roots_of_shift_ingroups}. Finally, in~\Cref{sec:embedding} we begin with a toy result meant to introduce the reader to the technique of conveyor belts adapted to groups, and then we prove both embedding results, Theorems~\ref{thm:EmbeddingZ} and~\ref{thm:EmbeddingF2}. Finally, in~\Cref{sec:questions} we give a few perspectives on how these results might be generalized and state a few open problems.

\subsubsection*{Acknowledgments}
The authors are grateful to Ville Salo who suggested the proof of~\Cref{lem:existence_egg_markers_general}, which in turn allowed us to get rid of a hypothesis on both~\Cref{thm:Ryan} and~\Cref{thm:EmbeddingZ}. We also acknowledge that this project was partly developed while attending CIRM's thematic month ``Discrete Mathematics \& Computer Science: Groups, Dynamics, Complexity, Words''. S. Barbieri was supported by ANID 1240085 FONDECYT regular, AMSUD240026 and ECOS230003. N. Carrasco-Vargas was supported by a grant from the Priority Research Area SciMat under the Strategic Programme Excellence Initiative at Jagiellonian University. P. Rivera-Burgos was supported by ANID 21212340 doctorado nacional, Vicerrector\'ia de Postgrado de la Universidad de Santiago de Chile, and MathAMsud 210020 Dynamical Group Theory.

\section{Preliminaries}\label{sec:prelim}


Throughout the article, $G$ denotes a group endowed with the discrete topology and with identity element $\Id_G$. We will use the notation $F\Subset G$ to denote that $F$ is a finite subset of $G$, and $\langle F\rangle$ to represent the finitely generated subgroup of $G$ generated by $F$. Given $g\in G$ and $F,K\subset G$, we write $F^{-1}=\{g : g^{-1}\in F\}$, $gF=\{gh : h\in F\}$ and $FK=\{gh : g\in F,\ h\in K\}$. We say that $F$ is symmetric if $F=F^{-1}$. 

Given two groups $G$ and $H$, we write $G\cong H$ to say that they are isomorphic. The center of $G$ is the normal subgroup $Z(G)= \{g \in G: gh=hg \mbox{ for every } h \in G\}$. For a subgroup $H \leqslant G$ we denote its left cosets in $G$ by $G/H$. For a set $A$ we denote by $\operatorname{Sym}(A)$ its group of permutations.

Let $S\Subset G$ be a finite and symmetric generating set of some group $G$. Given $g\in G$ we denote by $|g|_S$ the word length of $g$ with respect to the generating set $S$ and note by $d_S(g,h)=|g^{-1}h|_S$ the associated word metric. That is, $|g|_S$ is the length of the shortest word in $S^{\ast}$ which represents $g$. When there is no risk of confusion we drop the subscript and write $|g|$ and $d(g,h)$ instead. For $r\in\NN$ we write $B(r)=\{g\in G:|g|\leq r\}$ for the ball of radius $r$. 
\subsection{Shift spaces}\label{subsec:shifts}

Let $A$ be a finite set and $G$ be a group. The \define{full $G$-shift} is the set $A^{G} = \{ x\colon G \to A\}$ equipped with the prodiscrete topology and with the (left) \define{shift} action $G \curvearrowright A^{G}$ by left multiplication given by 
\[ (g x)(h) = x(g^{-1}h) \qquad \mbox{  for every } g,h \in G 
\mbox{ and } x \in A^G. \]

In our setting, the set $A$ is called an \define{alphabet}, the elements $a\in A$ are called \define{symbols} and the elements $x \in A^G$ are called \define{configurations}. For $F\Subset G$, a \define{pattern} with support $F$ is an element $p \in A^F$. We denote the cylinder generated by $p$ by $[p] = \{ x \in A^{G} : x|_F = p \}$ and note that the cylinders are a clopen base for the prodiscrete topology on $A^{G}$, which is Hausdorff and compact. We say a pattern $p$ \define{appears} in $x$ (at position $g\in G$) if $g^{-1}x \in [p]$. 

\begin{definition}
	A set $X\subset A^G$ is called a $G$-\define{subshift} if it is  closed in the prodiscrete topology and invariant under the shift action.
\end{definition}

Equivalently, $X$ is a $G$-subshift if and only if there exists a set of forbidden patterns $\F$ such that \[X= X_{\F} = \{ x \in A^{G} : gx\notin [p] \mbox{ for every } g \in G, p \in \F  \}.\]

When the context is clear, we will drop the $G$ and plainly speak of a subshift. In what follows, we will say that a subshift is \define{non-trivial} if it is neither empty nor reduced to a single point.

Let $M,N$ be arbitrary subsets of $G$, and let $p\in A^M$, $q\in A^N$. We write $p\sqsubset q$ whenever $M\subset N$ and $q|_M=p$. If $p$ and $q$ are equal when restricted to $M\cap N$, then we denote by $p\vee q$ their \define{concatenation}, that is, the unique element of $A^{M\cup N}$ determined by the conditions $p\sqsubset (p\vee q)$ and $q\sqsubset (p\vee q)$.

Given $F \Subset G$, we denote by $\L_{F}(X)$ the set of all patterns $p \in A^{F}$ for which there is $x\in X$ such that $p$ appears in $x$. The \define{language} of $X$ is the set given by \[ \L(X) = \bigcup_{F \Subset G} \L_F(X).  \]
We note that two subshifts coincide if and only if their languages coincide.

Given a subshift $X\subset A^G$, we denote by $\operatorname{Fix}(X) = \{ g \in G : gx = x \mbox{ for every } x \in A^G\}$ the kernel of the shift action, that is, the normal subgroup of all elements in $G$ which fix every $x \in X$. Thus the shift action $G\curvearrowright X$ is faithful when $\fix(X)=\{\Id_G\}$. 

\subsection{The strong topological Markov property}

We begin with the classical notion of subshift of finite type.

\begin{definition}
    A subshift $X$ is of \define{finite type} (SFT) if there exists a finite set of forbidden patterns $\F$ for which $X = X_{\F}$.
\end{definition}

In dynamical terms, being an SFT is equivalent to shadowing, that is, the property that pseudo orbits of the system can be uniformly approximated by orbits, see for instance~\cite{Oprocha2008,Meyerovitch2017}. The following notion is a weaker symbolic version of shadowing which we need for the proof of~\Cref{thm:EmbeddingF2}.

\begin{definition}
    A $G$-subshift $X$ has the \define{strong topological Markov property} (strong TMP) if there exists $M\Subset G$ with the following property. For every $F\Subset G$ and $x,y\in X$ such that $x|_{FM\smallsetminus F} = y|_{FM\smallsetminus F}$, the element $x|_F\vee y|_{G\smallsetminus F}$ belongs to $X$. 

    A set $M$ as above is called a \define{memory constant} for $X$.
\end{definition}

The strong TMP was introduced in~\cite{BGMT_2020} based on earlier notions from~\cite{ChanHanGuanMarcusMey2014}, although it also appears earlier in the work of Gromov~\cite{Gromov1999} under the name ``splicable space''. This property has been generalized to the setting of continuous group actions on compact metrizable spaces~\cite{BarGarLi2022}. We remark that while for a countable group there are only countably many subshifts of finite type up to topological conjugacy, the class of subshifts with the strong TMP is in general uncountable~\cite{BGMT_2020}. We also note 
that every SFT satisfies the strong TMP.

\subsection{Strongly irreducible subshifts}

\begin{definition}
    A $G$-subshift $X$ is called \define{strongly irreducible} if there exists $K\Subset G$ such that if $S,T$ are two finite subsets of $G$ such that $SK \cap T = \varnothing$, then for every $p \in \L _{S}(X)$ and $q \in \L_{T}(X)$ there exists $x \in X$ such that $x|_{S} = p$ and $x|_{T} = q$.

    A set $K$ as above is called a \define{strong irreducibility constant} for $X$.
\end{definition}

Observe that a subshift $X\subset A^G$ is strongly irreducible with constant $K\Subset G$ when for every $S,T\Subset G$ with  $SK \cap T = \varnothing$, and for every $p \in \L _{S}(X)$ and $q \in \L _{T}(X)$, their concatenation $p \vee q$ belongs to $L_{S \cup T}(X)$.

If a subshift $X$ is strongly irreducible with a constant $K$ which does not contain $\Id_G$, then necessarily $X$ is trivial (because $\{\Id_G\}K \cap \{\Id_G\} = \varnothing$ and thus $|L_{\{\Id_G\}}(X)|=1$). It follows that a strong irreducibility constant of a non-trivial subshift must necessarily contain the identity. We also note that if $K$ is a strong irreducibility constant, so is any set which contains $K$, in particular $K \cup K^{-1}$. This is useful as we often want to take symmetric strong irreducibility constants.

\begin{definition}
    Let $G$ be a group, $K\Subset G$ and $(A_i)_{i \in I}$ a collection of finite subsets of $G$. We say $(A_i)_{i \in I}$ is $K$-disjoint if for every $i \in I$ we have \[ A_iK \cap \bigcup_{j \in I\smallsetminus\{i\}}A_j = \varnothing.  \]
\end{definition}

The next lemma is essential and will be used several times.

\begin{lemma}\label{lem:K-irreducible-mainlemma}
    Let $G$ be a group, $X$ a strong irreducible $G$-subshift with  constant $K$, and $(A_i)_{i \in I}$ a $K$-disjoint collection of subsets of $G$. For any collection of patterns $(p_i)_{i \in I}$ with $p_i \in L_{A_i}(X)$ there exists $x \in X$ such that $x|_{A_i}=p_i$ for all $i \in I$.
\end{lemma}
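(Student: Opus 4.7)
The plan is to reduce the statement to the two-pattern case of strong irreducibility via induction, and then use compactness of $X$ to handle the general (possibly infinite) index set $I$. Note first that for the patterns $p_i \in \L_{A_i}(X)$ to make sense, each $A_i$ must be finite, though the index set $I$ may be arbitrary.

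First I would prove the result when $I$ is finite by induction on $|I|$. The base case $|I|=1$ is immediate from the definition of the language. For the inductive step, fix some $i_0 \in I$, apply the induction hypothesis to $(A_i)_{i \in I \smallsetminus \{i_0\}}$ (which is still $K$-disjoint) to obtain $y \in X$ with $y|_{A_i} = p_i$ for every $i \neq i_0$, and then set $S = A_{i_0}$ and $T = \bigcup_{i \neq i_0} A_i$. Both are finite since $I \smallsetminus \{i_0\}$ is finite. The $K$-disjointness of the family gives
\[
SK \cap T = A_{i_0}K \cap \bigcup_{i \neq i_0} A_i = \varnothing,
\]
so the strong irreducibility of $X$ with constant $K$ applied to $p_{i_0} \in \L_S(X)$ and $y|_T \in \L_T(X)$ yields an $x \in X$ with $x|_{A_{i_0}} = p_{i_0}$ and $x|_{A_i} = y|_{A_i} = p_i$ for all $i \neq i_0$, completing the induction.

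For arbitrary $I$, I would invoke compactness. For each $i \in I$, the set $C_i = \{x \in X : x|_{A_i} = p_i\}$ is a finite intersection of cylinders, hence clopen in $X$. For any finite $J \subset I$, the subfamily $(A_j)_{j\in J}$ remains $K$-disjoint, so by the finite case there is some $x \in \bigcap_{j \in J} C_j$, showing this intersection is nonempty. Since $X$ is compact and the $C_i$ are closed, the finite intersection property implies $\bigcap_{i \in I} C_i \neq \varnothing$, and any element of this intersection is the desired $x$.

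The main step is verifying the $K$-disjointness condition correctly aligns with the strong irreducibility hypothesis in the inductive step; the rest is routine. No significant obstacle is anticipated since the definition of $K$-disjointness was tailored precisely so that a single set $A_i$ is separated from the union of the others by a $K$-buffer, matching the two-pattern specification condition.
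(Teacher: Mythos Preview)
Your proof is correct and follows essentially the same approach as the paper: an induction over finite subfamilies (which the paper abbreviates as ``applying iteratively the property that $X$ is strongly irreducible'') followed by the finite intersection property and compactness of $X$. Your write-up is simply a more explicit version of the same argument.
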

\begin{proof}
    Applying iteratively the property that $X$ is strongly irreducible shows that for all finite $J \subset I$ we have $\bigcap_{j \in J}([p_j]\cap X) \ne\varnothing$. It follows that the collection $\{[p_i]\cap X :i\in I\}$ of closed subsets of $X$ has the finite intersection property, and since $X$ is compact, it follows that  
    $\bigcap_{i\in I}([p_i]\cap X)\ne\varnothing$. Any element $x\in \bigcap_{i\in I}([p_i]\cap X)$ has the desired property.
\end{proof}

The following result states that non-trivial strongly irreducible subshifts are ``almost faithful'' in the sense that the kernel of the shift action is contained in the irreducibility constant and thus finite.
\begin{proposition}\label{lem:if-X-SI-then-Fix-contained-in-K}
Let $X$ be a non-trivial strongly irreducible
subshift with constant $K$. Then $\fix(X)\subset K$. 
\end{proposition}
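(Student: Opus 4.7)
The plan is to prove the contrapositive: for each $g \in G \smallsetminus K$, I will exhibit a configuration $x \in X$ with $gx \neq x$, which shows $g \notin \fix(X)$ and hence $\fix(X) \subset K$.

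First, I would extract two distinct local symbols from the non-triviality hypothesis. Since $X$ is non-trivial, there are $x_0, y_0 \in X$ with $x_0 \neq y_0$, so some $h \in G$ satisfies $x_0(h) \neq y_0(h)$. Passing to the shifted configurations $h^{-1}x_0$ and $h^{-1}y_0$ (which are still in $X$) and reading their values at $\Id_G$, I obtain two distinct symbols $a \neq b$ with $a, b \in \L_{\{\Id_G\}}(X)$.

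Second, I would apply strong irreducibility to the singleton supports $S = \{\Id_G\}$ and $T = \{g\}$. The hypothesis $g \notin K$ gives exactly $SK \cap T = K \cap \{g\} = \varnothing$, which is the compatibility condition needed to glue $a$ at $\Id_G$ with $b$ at $g$. Strong irreducibility (or equivalently \Cref{lem:K-irreducible-mainlemma} applied to the $K$-disjoint family $(\{\Id_G\}, \{g\})$ with patterns $a$ and $b$) produces some $x \in X$ with $x(\Id_G) = a$ and $x(g) = b$.

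Finally, I would compute the value of $gx$ at $g$ using the definition of the shift action:
\[
    (gx)(g) \;=\; x(g^{-1} g) \;=\; x(\Id_G) \;=\; a \;\neq\; b \;=\; x(g),
\]
so $gx \neq x$, which gives $g \notin \fix(X)$ as required. The argument is short and the only thing to watch is picking supports that are asymmetric enough to detect the motion by $g$; singleton supports suffice because one entry is read at $\Id_G$ and the other at $g$, and these positions get swapped (up to $g^{-1}$) by the shift action, forcing a mismatch whenever the two chosen symbols differ.
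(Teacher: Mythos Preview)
Your proof is correct and follows essentially the same approach as the paper: both argue by taking $g\notin K$, using non-triviality to get two distinct symbols in $\L_{\{\Id_G\}}(X)$, applying strong irreducibility to the singleton supports $\{\Id_G\}$ and $\{g\}$ to produce $x\in X$ with $x(\Id_G)\neq x(g)$, and concluding $gx\neq x$. Your version simply spells out a few details (how to extract the two symbols, and the explicit computation $(gx)(g)=x(\Id_G)$) that the paper leaves implicit.
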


\begin{proof} As $|X|>1$, there are two different symbols $a,b\in A$ that appear in $X$. Let $g\in G\smallsetminus K$, then we have $\{\Id_G\}K\cap\{g\}=\varnothing$. Then by strong irreducibility there is a configuration $x\in X$ with $x(\Id_G)=a$ and $x(g)=b$ thus $x\ne gx$. This shows that $g\not\in\fix(X)$. \end{proof}

\begin{remark}
    Let $G$ be a group whose only normal finite subgroup is $\{\Id_G\}$, for instance, a torsion-free group such as $\ZZ^d$. The previous lemma shows that the shift action is faithful on every non-trivial strongly irreducible $G$-subshift.
\end{remark}

\begin{example}\label{fix-can-be-nontrivial-if-torsion-elements}
    There exist non-trivial strongly irreducible subshifts where the shift action is non-faithful. For instance if $F$ is a finite group and $G=\Z\times F$, we can consider \[
    X=\{x\in\{0,1\}^G: \mbox{ for every } t\in F, \ (0,t)x=x\}.
    \]
    Then $X$ is a strongly irreducible $G$-subshift with constant $K = \{0\}\times F$ and $\fix(X)=K$.
\end{example}

Next we will show that the language of a non-trivial strongly irreducible subshift is rich, more precisely, we will show that it grows exponentially with respect to the size of its support. For that we will need the following elementary lemma.

\begin{lemma}
\label{lem:conjunto-separado}
Let $G$ be a group, and let $K,F\Subset G$
be nonempty sets with $K$ symmetric. There exists $U\subset F$
with $|U||K| \geq |F|$ and such that $(\{g\})_{g \in U}$ is $K$-separated.\end{lemma}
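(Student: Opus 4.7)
The plan is to construct $U$ by a greedy procedure and then exploit maximality.

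First I would choose $U \subset F$ to be a maximal subset (with respect to inclusion) satisfying the property that for every pair of distinct $u, u' \in U$, one has $u' \notin uK$. Note that since $K$ is symmetric, the relation $u' \in uK$ is equivalent to $u \in u'K$, so this condition is symmetric in $u, u'$ and is precisely what it means for the family $(\{u\})_{u \in U}$ to be $K$-disjoint in the sense of the definition given before \Cref{lem:K-irreducible-mainlemma}. Such a maximal $U$ exists because $F$ is finite (one can simply run a greedy process: start from $U_0 = \varnothing$, and at each step, if there exists $f \in F$ with $f \notin uK$ for all $u$ already chosen, add it to $U$; terminate when no such $f$ remains).

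Next I would use the maximality to show that $F \subset UK$. Indeed, for any $f \in F$, if $f \notin UK$, then $U \cup \{f\}$ would still satisfy the $K$-separation condition (as $f \notin uK$ for every $u \in U$, and symmetrically $u \notin fK$ for every $u \in U$), contradicting the maximality of $U$. From the inclusion $F \subset UK$ we immediately get
\[
|F| \leq |UK| \leq |U|\,|K|,
\]
which is the desired inequality. The fact that $U \subset F$ is built into the construction.

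There is no real obstacle in this argument; it is a standard maximal packing/covering trick. The only subtle point worth emphasizing in the write-up is the use of the symmetry of $K$ to conclude that $K$-disjointness of singletons reduces to the symmetric binary relation $u^{-1}u' \notin K$, so that maximality of $U$ with respect to this relation yields the covering property $F \subset UK$ on the nose.
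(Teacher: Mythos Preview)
Your proposal is correct and follows exactly the same approach as the paper: take a maximal $K$-separated subset $U\subset F$, use symmetry of $K$ and maximality to deduce $F\subset UK$, and conclude $|F|\leq |UK|\leq |U||K|$.
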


\begin{proof}
    Let $U\subset F$ be a maximal set with the property that for every $g \in U$ we have $gK \cap (U\smallsetminus \{g\}) = \varnothing$. As $K$ is symmetric, we have that $F\subset UK$, otherwise we could take $h \in F\smallsetminus UK$ and $U'= U \cup \{h\}$ would also satisfy the property. We obtain $|U||K|\geq |UK|\geq |F|$.
\end{proof}



\begin{proposition}
\label{prop:lenguaje-de-un-subshit-SI-tiene-muchos-elementos} Let
$G$ be a group, let $X$ be a non-empty strongly irreducible $G$-subshift, and let $K\Subset G$ be a strong irreducibility
constant for $X$. For every $F\Subset G$ we have \[|L_{F}(X)|\geq \left\vert L_{\{\Id_G\}}(X) \right\vert^{\frac{|F|}{2|K|}}.\]
\end{proposition}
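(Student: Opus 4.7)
My plan is to reduce the estimate to a single-site counting problem: exhibit inside $F$ a subset $U$ of positions that are pairwise sufficiently far apart, so that a symbol at each such position can be chosen independently via \Cref{lem:K-irreducible-mainlemma}, and then bound $|L_F(X)|$ from below by $|L_U(X)|$ via restriction.

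First I would symmetrize the strong irreducibility constant. Since any superset of a strong irreducibility constant is again one, $X$ is strongly irreducible with constant $K' := K \cup K^{-1}$, and $|K'| \leq 2|K|$. I would then apply \Cref{lem:conjunto-separado} with this symmetric set $K'$ to obtain a subset $U \subset F$ with $|U|\,|K'| \geq |F|$ and such that the family of singletons $(\{g\})_{g \in U}$ is $K'$-disjoint (that is, $gK' \cap (U \smallsetminus \{g\}) = \varnothing$ for every $g \in U$). Rearranging gives
\[
|U| \geq \frac{|F|}{|K'|} \geq \frac{|F|}{2|K|}.
\]

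Next, I would observe that shift-invariance yields $L_{\{g\}}(X) = L_{\{\Id_G\}}(X)$ for every $g \in G$: indeed, if $a = x(\Id_G)$ for some $x \in X$, then $gx \in X$ satisfies $(gx)(g) = x(\Id_G) = a$, and conversely. Applying \Cref{lem:K-irreducible-mainlemma} to $X$ with constant $K'$ and to the $K'$-disjoint family of singletons $(\{g\})_{g \in U}$, any assignment $(a_g)_{g \in U} \in L_{\{\Id_G\}}(X)^U$ is realized by some $x \in X$ with $x(g) = a_g$ for every $g \in U$. This shows
\[
|L_U(X)| \geq |L_{\{\Id_G\}}(X)|^{|U|}.
\]
Finally, because every pattern in $L_U(X)$ extends to a configuration in $X$ whose restriction to $F$ lies in $L_F(X)$ and restricts back to the original pattern on $U$, the restriction map $L_F(X) \to L_U(X)$ is surjective, so $|L_F(X)| \geq |L_U(X)|$. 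Combining these bounds with $|U| \geq |F|/(2|K|)$ delivers the desired inequality.

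The argument is essentially bookkeeping: the only subtle point is controlling the factor of $2$ that arises from replacing $K$ by $K \cup K^{-1}$ in order to legitimately invoke the symmetric-set hypothesis of \Cref{lem:conjunto-separado}. No genuine obstacle is expected beyond keeping this constant honest.
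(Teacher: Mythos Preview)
Your proof is correct and follows essentially the same approach as the paper's: symmetrize $K$ to $K' = K\cup K^{-1}$, apply \Cref{lem:conjunto-separado} to extract a $K'$-separated set $U\subset F$ with $|U|\geq |F|/|K'|$, then invoke \Cref{lem:K-irreducible-mainlemma} to fill the positions of $U$ independently with symbols from $L_{\{\Id_G\}}(X)$, and finally bound $|L_F(X)|\geq |L_U(X)|$ by restriction. The only cosmetic difference is that you spell out the surjectivity of the restriction map and the identification $L_{\{g\}}(X)=L_{\{\Id_G\}}(X)$ explicitly, whereas the paper compresses these into a single chain of inequalities.
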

\begin{proof}
Let $K'=K\cup K^{-1}$. By~\Cref{lem:conjunto-separado} there exists $U\subset F$ with $|K'||U|\geq |F|$ and such that $(\{g\})_{g \in U}$ is $K'$-separated. As $X$ is strongly irreducible,~\Cref{lem:K-irreducible-mainlemma} implies that for each map $\sigma \colon U \to L_{\{\Id_G\}}(X)$ there exists $x_{\sigma}\in X$ such that $x_{\sigma}(g)=\sigma(g)(\Id_G)$ for every $g \in U$. We conclude that \[ \left\vert L_F(X)\right\vert \geq \left\vert L_U(X)\right\vert \geq \left\vert L_{\{\Id_G\}}(X)\right\vert^{|U|} \geq \left\vert L_{\{\Id_G\}}(X)\right\vert^{\frac{|F|}{|K'|}} \geq \left\vert L_{\{\Id_G\}}(X)\right\vert^{\frac{|F|}{2|K|}}. \qedhere  \]
\end{proof}

In particular, if $X$ is a non-trivial strongly irreducible $G$-subshift, we get that for every $F\Subset G$ we have \[|L_{F}(X)|\geq 2^{\frac{|F|}{2|K|}}.\]

\subsection{Automorphism groups and elementary results}\label{subsection:automorphism-groups-basic-things}

Given two $G$-subshifts $X$ and $Y$, a map $\phi\colon X\to Y$ is called a \define{morphism} if it is continuous and $G$-equivariant (that is, if $g\phi(x)=\phi(gx)$ for every $x \in X$ and $g \in G$). The space of all self morphisms of a subshift $X$ is denoted by $\operatorname{End}(X)$. The Curtis-Hedlund-Lyndon Theorem provides a characterization of these morphisms through local maps, see~\cite[Theorem 1.8.1]{ceccherini-SilbersteinC09} for a modern proof. This result will be used many times in what follows, we shall refer to it as the \define{CHL theorem}. 

\begin{theorem}[CHL theorem]\label{thm:curtis_lindon_hedlund}
   A map $\phi\colon A^G \to B^G$ is a morphism if and only if there is $F\Subset G$ and $\Phi \colon A^F\to B$ such that $\phi(x)(g) = \Phi((g^{-1} x)|_{F})$ for every $x \in A^G$, $g \in G$.
\end{theorem}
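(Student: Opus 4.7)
The plan is to prove the two directions of the equivalence separately. The sufficiency direction---that a map defined by a local rule is both continuous and $G$-equivariant---is straightforward. Given any cylinder $[q] \subset B^{G}$ with finite support $E$, one checks that $\phi^{-1}([q])$ is a finite union of cylinders with support contained in $EF$, which gives continuity. For $G$-equivariance, a direct computation using the definition of the shift action yields $\phi(hx)(g) = \Phi((g^{-1}hx)|_F) = \Phi(((h^{-1}g)^{-1}x)|_F) = \phi(x)(h^{-1}g) = (h\phi(x))(g)$ for all $g,h \in G$.

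For the necessity direction, I would first focus on the value $\phi(x)(\Id_G)$ viewed as a function of $x \in A^{G}$. Since $\phi$ is continuous and $B$ is finite, for each $b \in B$ the preimage $\phi^{-1}([b])$ (where $[b]$ is the singleton cylinder at $\Id_G$) is clopen, and these preimages form a finite clopen partition of $A^{G}$. Since cylinders with finite support form a clopen base of the prodiscrete topology, by compactness of $A^G$ each of these preimages can be written as a finite union of cylinders; moreover one may choose a single common finite support $F \Subset G$ that refines all these cylinders simultaneously. Hence $\phi(x)(\Id_G)$ depends only on $x|_F$, and we can define $\Phi \colon A^F \to B$ by $\Phi(x|_F) = \phi(x)(\Id_G)$. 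Equivariance then propagates this to every coordinate: for any $g \in G$,
\[
\phi(x)(g) = (g^{-1}\phi(x))(\Id_G) = \phi(g^{-1}x)(\Id_G) = \Phi((g^{-1}x)|_F).
\]

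The main technical point to justify is the simultaneous choice of the finite memory set $F$ for all symbols $b \in B$. This follows from compactness and the base property of cylinders: each clopen set $\phi^{-1}([b])$ is a finite union of cylinders, and taking the union of the supports of all cylinders appearing across the finitely many $b \in B$ yields the desired finite $F$. Once this refinement step is in hand, the rest of the argument is a direct consequence of equivariance together with the definition of the shift action, so I expect no further obstacles beyond this compactness step.
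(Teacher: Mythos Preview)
Your argument is correct and is essentially the standard proof of the Curtis--Hedlund--Lyndon theorem. Note, however, that the paper does not actually prove this statement: it is quoted as background and the reader is referred to \cite[Theorem 1.8.1]{ceccherini-SilbersteinC09} for a proof, so there is no in-paper proof to compare against. Your compactness-based extraction of a common finite memory set $F$ and the use of equivariance to propagate the local rule from the identity to every coordinate match the classical argument found in that reference.
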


A morphism $\phi \colon X \to Y$ is called a \define{topological factor map} if it is surjective and a \define{topological conjugacy} if it is bijective.

\begin{definition}
    The automorphism group of a $G$-subshift $X$ is the set $\Aut(X)$ of all $G$-equivariant homeomorphisms $\varphi \colon X \to X$ with composition as the group operation.
\end{definition}

In other words, $\Aut(X)$ is the group of all topological self-conjugacies of $X$. A natural consequence of the CHL theorem is that for every countable group $G$ and $G$-subshift $X$, we have that $\Aut(X)$ is countable. 

\begin{definition}
    The \define{right shift} by $g\in G$ is the map $\tau_g \colon A^G \to A^G$ given by \[ \tau_g(x)(h) = x(hg) \mbox{ for all } x \in A^G, h \in G.   \]
\end{definition}

We note that for any $g \in G$ the right shift map $\tau_g$ is $G$-equivariant (with respect to the left shift action). Indeed, for every $h \in G$ we have \[ \tau_g(hx)(t) = (hx)(tg) = x(h^{-1}tg) = \tau_g(x)(h^{-1}t) = h(\tau_g(x))(t).   \]  
However, the analogous left shift map $x \mapsto g^{-1}x$ is not $G$-equivariant unless $g\in Z(G)$, in which case it coincides with the right shift.

\begin{proposition}\label{prop:G_in_aut}
    For every group $G$, we have that $G$ embeds into $\aut(A^{G})$.
\end{proposition}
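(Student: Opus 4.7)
The plan is to exhibit an explicit embedding via the right-shift maps $\tau_g$ already introduced in the paper. Consider the map $\Phi \colon G \to \Aut(A^G)$ defined by $\Phi(g) = \tau_g$. I would proceed in three short steps.

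First, I would verify that $\tau_g$ actually lies in $\Aut(A^G)$. Continuity is immediate from the CHL theorem (\Cref{thm:curtis_lindon_hedlund}) applied with window $F = \{g\}$ and local rule $\Phi(p) = p(g)$. The $G$-equivariance was already checked in the paragraph preceding the statement. Bijectivity follows from the identity $\tau_g \circ \tau_{g^{-1}} = \tau_{\Id_G} = \mathrm{id}$, which is a particular case of the homomorphism computation below.

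Second, I would check that $\Phi$ is a group homomorphism. For $g,h \in G$, $x \in A^G$ and $t \in G$,
\[
(\tau_g \circ \tau_h)(x)(t) \;=\; \tau_h(x)(tg) \;=\; x(tgh) \;=\; \tau_{gh}(x)(t),
\]
so $\tau_g \circ \tau_h = \tau_{gh}$, as desired.

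Third (and this is the only point where anything might go wrong), I would show injectivity of $\Phi$, which forces the implicit assumption $|A|\geq 2$ that is standard in the paper's setting. Suppose $\tau_g = \mathrm{id}$; then $x(hg)=x(h)$ for every $x \in A^G$ and every $h \in G$. Evaluating at $h = \Id_G$ gives $x(g) = x(\Id_G)$ for every $x \in A^G$. Picking any two distinct symbols $a,b \in A$ and any configuration $x$ with $x(\Id_G)=a$, $x(g)=b$ forces $g = \Id_G$. Hence $\ker \Phi = \{\Id_G\}$ and $\Phi$ is an embedding. The main (minor) obstacle is just clarifying the $|A|\geq 2$ hypothesis, since if $|A|=1$ the automorphism group is trivial and the statement fails.
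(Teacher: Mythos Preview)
Your proof is correct and follows essentially the same approach as the paper: define $g\mapsto\tau_g$, verify $\tau_g\in\Aut(A^G)$, check $\tau_g\circ\tau_h=\tau_{gh}$, and show injectivity. Your observation that the injectivity step tacitly requires $|A|\geq 2$ is a valid clarification that the paper leaves implicit.
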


\begin{proof}
    Let $g \in G$. The right shift map $\tau_g$ is a self homeomorphism of $A^G$ which is $G$-equivariant, thus $\tau_g \in \Aut(A^G)$. Clearly $\tau_g \circ \tau_h = \tau_{gh}$ and $\tau_g = \tau_h$ if and only if $g=h$, therefore $g \mapsto \tau_g$ is a monomorphism from $G$ to $\Aut(A^G)$.
\end{proof}

    Let $G$ be a group and $X$ be a $G$-subshift. In general it is not true that the right shifts are self homeomorphisms of $X$, since $X$ is not necessarily invariant by right shifts. In fact, it is possible to construct infinite minimal SFTs in the free group on two generators with trivial automorphism group.

    \begin{example}\label{ex:trivial_aut_group}
    Let $F_2$ be the free group on generators $a,b$ and let $A = \{a,a^{-1},b,b^{-1}\}$. Consider the subshift \[X = \{x \in A^{F_2} : \mbox{ for every } g \in F_2 \mbox{ if } x(g) = t, \mbox{ then for } s \in A\smallsetminus \{t\}, x(gs) = s^{-1}\}.\]
    One can verify that $X$ is an uncountable and minimal $F_2$-SFT. In fact, the shift action $F_2 \curvearrowright X$ is a symbolic representation of the natural action of $F_2$ on the metric boundary of its Cayley graph. 
    
    We argue that $\Aut(X)$ is trivial. For $s \in A$, let $C(s)$ denote the cone of all reduced words in $F_2$ which begin by $s$. It can be verified that if $x \in X$ is such that $x(\Id_{F_2})=t$, then for every $s \in A \smallsetminus \{t\}$ the values of $x$ are fixed on $C(s)$ and must ``point towards $\Id_{F_2}$'', more precisely, if $w = sva \in C(s)$ is a reduced word with $v \in S^*$ and $a \in A$, then $x(sva) = a^{-1}$. We illustrate this property in~\Cref{fig:grupolibre}.
    \begin{figure}[ht!]
        \centering
        \begin{tikzpicture}[scale=0.8]
			\def \c{0.5}
			\def \b{0.4}
			\def \a{0.3}

            \fill[opacity = 0.6, gray!80, path fading=west] (-0.5,0) -- (-5,4.5) --(-5,-4.5) -- cycle;
            \fill[opacity = 0.6, gray!80, path fading=north] (0,0.5) -- (-4.5,5) --(4.5,5) -- cycle;
           \fill[opacity = 0.6, gray!80, path fading=south] (0,-0.5) -- (-4.5,-5) --(4.5,-5) -- cycle;
			
			\draw [->] (0,0) to (-4.8,0);
			\draw [->] (0,0) to (4.8,0);
			\draw [->] (0,0) to (0,4.8);
			\draw [->] (0,0) to (0,-4.8);
			
			\node at (1.5,0.3) {$a$};
			\node at (0.4,1.5) {$b$};
			
			\draw [->] (-3,0) to (-3,-1.8);
			\draw [->] (-3,0) to (-3, 1.8);
			
			\draw [->] (3,0) to (3,-1.8);
			\draw [->] (3,0) to (3, 1.8);
			
			\draw [->] (0,3) to (-1.8,3);
			\draw [->] (0,3) to (1.8, 3);
			
			\draw [->] (0,-3) to (-1.8,-3);
			\draw [->] (0,-3) to (1.8, -3);

			\draw[fill = yellow!10] (0,0) circle (\c);
            \node at (0,0) {\scalebox{1.5}{$a$}};
			\draw[fill = yellow!10] (-3,0) circle (\b);
            \node at (-3,0) {\scalebox{1.2}{$a$}};
			\draw[fill = yellow!10] (3,0) circle (\b);
            \node at (3,0) {\scalebox{1.2}{$?$}};
			\draw[fill = yellow!10] (0,3) circle (\b);
            \node at (0,3) {\scalebox{1.2}{$b^{\text{-}1}$}};
			\draw[fill = yellow!10] (0,-3) circle (\b);
            \node at (0,-3) {\scalebox{1.2}{$b$}};

			\draw [->] (-4,0) to (-4,0.8);
			\draw [->] (-4,0) to (-4,-0.8);
			\draw[fill = yellow!10] (-4,0) circle (\a);
            \node at (-4,0) {\scalebox{0.8}{$a$}};
			\draw [->] (-3,1) to (-3.8,1);
			\draw [->] (-3,1) to (-2.2,1);
			\draw[fill = yellow!10] (-3,1) circle (\a);
            \node at (-3,1) {\scalebox{0.8}{$b^{\text{-}1}$}};
			\draw [->] (-3,-1) to (-3.8,-1);
			\draw [->] (-3,-1) to (-2.2,-1);
			\draw[fill = yellow!10] (-3,-1) circle (\a);
            \node at (-3,-1) {\scalebox{0.8}{$b$}};

			\draw [->] (4,0) to (4,0.8);
			\draw [->] (4,0) to (4,-0.8);
			\draw[fill = yellow!10] (4,0) circle (\a);
            \node at (4,0) {\scalebox{0.8}{$?$}};
			\draw [->] (3,1) to (3.8,1);
			\draw [->] (3,1) to (2.2,1);
			\draw[fill = yellow!10] (3,1) circle (\a);
            \node at (3,1) {\scalebox{0.8}{$?$}};
			\draw [->] (3,-1) to (3.8,-1);
			\draw [->] (3,-1) to (2.2,-1);
			\draw[fill = yellow!10] (3,-1) circle (\a);
            \node at (3,-1) {\scalebox{0.8}{$?$}};
			
			\draw [->] (0,4) to (-0.8,4);
			\draw [->] (0,4) to (0.8,4);
			\draw[fill = yellow!10] (0,4) circle (\a);
            \node at (0,4) {\scalebox{0.8}{$b^{\text{-}1}$}};
			\draw [->] (-1,3) to (-1,3.8);
			\draw [->] (-1,3) to (-1,2.2);
			\draw[fill = yellow!10] (-1,3) circle (\a);
            \node at (-1,3) {\scalebox{0.8}{$a$}};
			\draw [->] (1,3) to (1,3.8);
			\draw [->] (1,3) to (1,2.2);
			\draw[fill = yellow!10] (1,3) circle (\a);
            \node at (1,3) {\scalebox{0.8}{$a^{\text{-}1}$}};
			
			\draw [->] (0,-4) to (-0.8,-4);
			\draw [->] (0,-4) to (0.8,-4);
			\draw[fill = yellow!10] (0,-4) circle (\a);
            \node at (0,-4) {\scalebox{0.8}{$b$}};
			\draw [->] (-1,-3) to (-1,-3.8);
			\draw [->] (-1,-3) to (-1,-2.2);
			\draw[fill = yellow!10] (-1,-3) circle (\a);
            \node at (-1,-3) {\scalebox{0.8}{$a$}};
			\draw [->] (1,-3) to (1,-3.8);
			\draw [->] (1,-3) to (1,-2.2);
			\draw[fill = yellow!10] (1,-3) circle (\a);
            \node at (1,-3) {\scalebox{0.8}{$a^{\text{-}1}$}};
			\node (1) at(1,1) {};
		
			\end{tikzpicture}
        
        \caption{Three cones are fixed by the symbol at the origin.}
        \label{fig:grupolibre}
    \end{figure}
    
    Given an automorphism $\varphi \in \Aut(X)$ and $x \in X$, then $x$ and $\varphi(x)$ must have the same values on at least two cones (the ones given by the generators in $A \smallsetminus \{ x(\Id_{F_2}), \varphi(x)(\Id_{F_2}) \}$). The CHL theorem implies that $\varphi$ must locally act as the identity on any large enough pattern contained in these cones, and minimality implies that every pattern occurs in those cones, thus $\varphi$ must be the identity. 
\end{example}

The previous issue does not arise if we consider the right shift by elements in $Z(G)$. In this case the map coincides with the left shift and $G$-invariance of $X$ ensures that $\tau_g$ is a homeomorphism. Modding out by $\operatorname{Fix}(X)$ yields the following general result.

\begin{proposition}\label{center-of-G-embeds-in-Aut(G)}
Let $X\subset A^{G}$ be a subshift. Then $\langle\tau_g : g\fix(X)\in Z(G/\fix(X))\rangle$ is a subgroup of $Z(\Aut(X))$, and is isomorphic to $Z\left(\quot{G}{\operatorname{Fix}(X)}\right)$.   
\end{proposition}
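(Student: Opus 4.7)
The plan is to reduce everything to one key identity: for any $g\in G$ whose coset $g\fix(X)$ lies in $Z(G/\fix(X))$, the right shift $\tau_g$ agrees on $X$ with the left shift by $g^{-1}$, i.e.\ $\tau_g(x) = g^{-1}x$ for every $x\in X$. Once this is established, the three conclusions (that $\tau_g\in\Aut(X)$, centrality, and the isomorphism) follow essentially formally. To prove the identity, I would compare $\tau_g(x)(h) = x(hg)$ with $(g^{-1}x)(h) = x(gh)$, and observe that centrality of $g\fix(X)$ in $G/\fix(X)$ gives $hg = ghf$ for some $f\in\fix(X)$. Normality of $\fix(X)$ then lets me push $f$ to the left, writing $ghf = f'\, gh$ with $f'\in\fix(X)$, and since $f'x = x$ we conclude $x(hg) = x(gh)$.

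With this identity in hand, $\tau_g$ restricts to a self-homeomorphism of $X$ and coincides there with the shift action by $g^{-1}$, hence is automatically $G$-equivariant, so $\tau_g\in\Aut(X)$. Centrality is then immediate, because every $\varphi\in\Aut(X)$ commutes with the shift action by definition; the subgroup generated by these $\tau_g$ therefore also sits inside $Z(\Aut(X))$. For the isomorphism, I would define $\Psi\colon Z(G/\fix(X)) \to \Aut(X)$ by $\Psi(g\fix(X)) = \tau_g$. Well-definedness follows from the key identity (if $g_1\fix(X) = g_2\fix(X)$, both right shifts act on $X$ as the same shift); the homomorphism property is the direct computation $\tau_{g_1}\tau_{g_2}(x)(h) = x(hg_1 g_2) = \tau_{g_1g_2}(x)(h)$; and injectivity holds because $\tau_g = \operatorname{id}_X$ forces $g^{-1}x = x$ for all $x\in X$, i.e.\ $g\in\fix(X)$. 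Since the image of $\Psi$ is a subgroup of $\Aut(X)$ containing all the generators $\tau_g$, it coincides with $\langle \tau_g : g\fix(X)\in Z(G/\fix(X))\rangle$, delivering the claimed isomorphism with $Z(G/\fix(X))$.

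I do not anticipate a substantive obstacle in this argument; the only point requiring care is the bookkeeping between left multiplication (encoding cosets and the shift action) and right multiplication (encoding $\tau_g$), and the repeated use of normality of $\fix(X)$ to slide elements of $\fix(X)$ across products before invoking $fx = x$. Everything else is formal group theory on top of the key identity.
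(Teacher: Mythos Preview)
Your proposal is correct and follows essentially the same approach as the paper: both establish the key identity $\tau_g(x)=g^{-1}x$ on $X$ from centrality of $g\fix(X)$, deduce $\tau_g\in\Aut(X)$ and centrality via $G$-equivariance of automorphisms, and then build the isomorphism via the map $g\fix(X)\mapsto\tau_g$. Your write-up simply supplies more detail in the coset computation that the paper leaves implicit; one small expository point is that $G$-equivariance of $\tau_g$ on $A^G$ holds for all $g$ and need not be derived from the identity.
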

\begin{proof}
We check that $\tau_g\in Z(\Aut(X))$ for every $g\in G$ such that $g\fix(X) \in Z(G/\fix(X))$, and thus $\langle\tau_g : g\fix(X)\in Z(G/\fix(X))\rangle \leqslant Z(\Aut(X))$.  Let $g \in G$ as above. Since $X$ is $G$-invariant, it is also $G/\fix(X)$-invariant under the natural action. Since $g\fix(X) \in Z(G/\fix(X))$, it follows that $\tau_g(x) = g^{-1}x$ for every $x \in X$ and thus that $\tau_g \in \Aut(X)$. Moreover, if $\varphi \in \Aut(X)$ we have that \[ \varphi(\tau_g(x)) =  \varphi(g^{-1}x) = g^{-1}\varphi(x) = \tau_g(\varphi(x)).  \]
Thus $\tau_g \in Z(\Aut(X))$.

The verification that $\langle\tau_g : g\fix(X)\in Z(G/\fix(X))\rangle$ is isomorphic to $Z(G/\fix(X))$ is elementary, but we provide the details for the interested reader. Define a group homomorphism $f\colon Z\left(G/\operatorname{Fix}(X)\right)\to\langle\tau_g : g\fix(X)\in Z(G/\fix(X))\rangle$ by setting $f(g\fix(X))=\tau_g$. The map $f$ is well defined since $\tau_g=\tau_{g'}$ when  $g'g^{-1}\in\fix(X)$, and it is clear that $f$ is an injective group homomorphism. The definition of $f$ shows that its range is equal to $\{\tau_g : g\fix(X)\in Z(G\fix(X))\} = \langle\tau_g : g\fix(X)\in Z(G/\fix(X))\rangle$. 
\end{proof}

 The following theorem will not be needed in the rest of our article, but we find it worthwhile to include, as it is the natural generalization of~\cite[Theorem 3.1]{BoyleLindRudolph} to arbitrary groups.

\begin{theorem}
    Let $G$ be a group and $X\subset A^G$ a subshift. If the set of points with finite orbit of $G\curvearrowright X$ is dense in $X$, then $\Aut(X)$ is residually finite. In particular, $\Aut(A^{G})$ is residually finite if and only if $G$ is residually finite.
\end{theorem}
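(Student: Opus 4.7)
The plan is to generalize the Boyle-Lind-Rudolph argument by replacing periodic-point sets associated with powers of the shift by sets of $N$-periodic configurations for finite-index normal subgroups $N \trianglelefteq G$. For each such $N$, consider $X^N = \{x \in X : nx = x \text{ for every } n \in N\}$. Any element of $X^N$ is constant on the right cosets of $N$, so $X^N$ is finite with $|X^N| \le |A|^{[G:N]}$. Moreover, $X^N$ is $\Aut(X)$-invariant because $\Aut(X)$ acts by $G$-equivariant maps and therefore preserves stabilizers: if $\phi \in \Aut(X)$ and $x \in X^N$, then for every $n \in N$, $n\phi(x) = \phi(nx) = \phi(x)$. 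Restriction to $X^N$ yields a group homomorphism $\rho_N \colon \Aut(X) \to \operatorname{Sym}(X^N)$ with finite image, and residual finiteness of $\Aut(X)$ will follow once I show that the family $(\rho_N)_N$ separates the identity.

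To achieve this, given $\phi \in \Aut(X) \setminus \{\mathrm{id}\}$, I will pick $x \in X$ with $\phi(x) \ne x$ and, after replacing $x$ by a suitable translate (using that $\phi$ is $G$-equivariant), assume $\phi(x)(\Id_G) \ne x(\Id_G)$. By the CHL theorem applied to both $\phi$ and $\phi^{-1}$, there is a finite window $F \Subset G$ such that the value $\phi(x')(\Id_G)$ depends only on $x'|_F$. Using density of points with finite orbit, I will find $y \in X$ with finite orbit such that $y|_F = x|_F$; then $\phi(y)(\Id_G) = \phi(x)(\Id_G) \ne x(\Id_G) = y(\Id_G)$, so $\phi(y) \ne y$. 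The stabilizer $H = \stab(y)$ has finite index in $G$, hence its normal core $N = \bigcap_{g \in G} g H g^{-1}$ is a finite-index normal subgroup, and we have $y, \phi(y) \in X^N$ with $\rho_N(\phi)(y) = \phi(y) \ne y$.

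For the ``if and only if'' assertion about $A^G$, one direction is immediate: by~\Cref{prop:G_in_aut}, $G$ embeds into $\Aut(A^G)$, and subgroups of residually finite groups are residually finite. For the converse, assuming $G$ is residually finite, I will verify that finite-orbit points are dense in $A^G$: given a cylinder $[p]$ with support $F \Subset G$, residual finiteness of $G$ furnishes a finite-index normal subgroup $N$ with $(F^{-1}F \smallsetminus \{\Id_G\}) \cap N = \varnothing$, so distinct elements of $F$ lie in distinct right cosets of $N$; one can then prescribe the values of $p$ on $F$ within a transversal and extend $N$-periodically to obtain a finite-orbit configuration in $[p]$. The main part of the theorem then yields residual finiteness of $\Aut(A^G)$. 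I do not anticipate any serious obstacle: the core observation is that $G$-equivariant automorphisms preserve stabilizers, which makes the finite sets $X^N$ the natural replacement for the periodic-point sets used in the classical proof.
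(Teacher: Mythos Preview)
Your proposal is correct and follows essentially the same approach as the paper: both arguments use the restriction homomorphisms $\Aut(X)\to\operatorname{Sym}(\stab_H(X))$ for finite-index subgroups $H$, and density of finite-orbit points together with the CHL local rule to separate a nontrivial automorphism from the identity. The only cosmetic differences are that you restrict to \emph{normal} finite-index subgroups (taking the normal core), argue directly rather than by contrapositive, and spell out the density of periodic points in $A^G$ when $G$ is residually finite instead of citing a reference; none of this changes the substance of the proof. (One tiny point: you should ensure $\Id_G\in F$ so that $y|_F=x|_F$ yields $y(\Id_G)=x(\Id_G)$, and applying CHL to $\phi^{-1}$ is unnecessary.)
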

    \begin{proof}
   Let $H\leqslant G$ and let $\stab_{H}(X)=\{x\in X\colon hx=x\text{ for all } h\in H\}$. We remark that when $H$ has finite index this set is finite. Moreover, for every $\varphi\in\aut(X)$ the set $\stab_{H}(X)$ is invariant under $\varphi$, thus the restricted map $\varphi|_{\stab_{H}(X)} \colon \stab_{H}(X) \to \stab_{H}(X)$ induces a permutation in $\operatorname{Sym}(\stab_{H}(X))$. 
   
   Let $K_H$ be the kernel of the restriction map $\psi_H \colon \Aut(X) \to \operatorname{Sym}(\stab_{H}(X))$ given by $\psi_H(\varphi)=\varphi|_{\stab_{H}(X)}$. Notice that $K_H$ is a normal finite index subgroup of $\Aut(X)$ whenever $H$ is a finite index subgroup of $G$. Suppose that the set of points with finite orbit is dense in $X$, we claim that \[\bigcap_{\substack{H\leqslant G \\ [G:H]<\infty }}K_{H}=\{\Id_{\aut(X)}\}.\] 
   
   Let $\varphi\in\bigcap_{\substack{H\leqslant G \\ [G:H]<\infty }}K_{H}$, and let $\Phi\colon A^F\to A$ be a local rule defining $\varphi$ as in the CHL theorem. Without loss of generality we assume $1_G\in F$. Let $p\in L_F(X)$. Since $[p]$ is open and configurations with finite orbit are dense in $X$, there is $x\in X$ with finite orbit and $x\in [p]$. According to the definition of $\varphi$ we have $\varphi(x)(\Id_G)=\Phi(x|_F)=\Phi (p)$. Since $\varphi(x)=x$, it follows that $\Phi(p)=x(\Id_G)=p(\Id_G)$. This proves that the local rule $\Phi$ satisfies $\Phi(p)=p(\Id_G)$ for every $p\in L_F(X)$, and therefore $\varphi$ is the identity map on $X$.  We conclude that $\varphi = \Id_{\Aut(X)}$ and thus that $\aut(X)$ is a residually finite group. 
   
   In the case when $X=A^G$ it is well-known that the set of points with finite orbits is dense when $G$ is residually finite, see~\cite[Theorem 2.7.1]{ceccherini-SilbersteinC09} and thus $\Aut(A^G)$ is residually finite.
  The converse follows from~\Cref{prop:G_in_aut} and the fact that every subgroup of a residually finite group is residually finite.\end{proof}

We finish this section collecting some basic facts about restrictions of strongly irreducible subshifts. 
\begin{proposition}\label{prop:basic-things-SI}
    Let $G$ be a group and let $X\subset A^G$ be a non-trivial strongly irreducible $G$-subshift with constant $K$. Let $H\leqslant G$ with $K\subset H$, and consider the $H$-subshift $X|_H=\{x|_H  \in A^H : x\in X\}$. Then:
\begin{enumerate}[(1)]
\item $K$ is a strong irreducibility constant for $X|_H$.
\item $\fix(X|_H)=\fix(X)$.
\item $X$ can be defined by a set of forbidden patterns whose support is contained in $H$. 
\item For every $x\in X$ and $y\in X|_H$, $x|_{G\smallsetminus H}\lor y$ lies in $X$. 
\item $X$ has the strong TMP if and only if $X|_H$ has the strong TMP.
\item $\Aut(X|_H)$ embeds into $\Aut(X)$. 
\end{enumerate}
\end{proposition}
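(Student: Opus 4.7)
The plan is to prove the six items in the order $(1) \to (4) \to (3) \to (2) \to (5) \to (6)$, since the later items use the earlier ones. Throughout, I would repeatedly exploit the inclusion $K \subset H$ and the structure of cosets of $H$ in $G$.

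For (1), every pattern in $L_F(X|_H)$ for $F \Subset H$ is also a pattern in $L_F(X)$ (a pattern supported in $H$ extends to $X$ iff it extends to $X|_H$), so one combines the given patterns via strong irreducibility of $X$ and restricts to $H$. For (4), the key observation is that $K \subset H$ implies $HK \subset H$, hence the pair $\{H, G \smallsetminus H\}$ is $K$-disjoint; given $x \in X$ and $y = z|_H$ for some $z \in X$, applying \Cref{lem:K-irreducible-mainlemma} to finite restrictions together with compactness of $X$ yields a configuration $w \in X$ with $w|_H = y$ and $w|_{G \smallsetminus H} = x|_{G \smallsetminus H}$. For (3), let $\mathcal F$ be the collection of all patterns supported on finite subsets of $H$ that do not appear in $X$. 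The inclusion $X \subset X_{\mathcal F}$ is immediate; for the converse, given $x \in X_{\mathcal F}$ and $F \Subset G$, decompose $F$ along the finitely many cosets of $H$ it meets into pieces $(F_i)_i$. Translating each $F_i$ inside $H$, the restricted pattern lies in the language of $X$ by hypothesis; the $F_i$'s are $K$-disjoint since distinct cosets of $H$ are separated by $H \supset K$, and \Cref{lem:K-irreducible-mainlemma} combines them into a witness that $x|_F \in L_F(X)$, so $x \in X$.

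For (2), the inclusion $\fix(X) \subset \fix(X|_H)$ follows from $(hx)|_H = h \cdot (x|_H)$ whenever $h \in H$. For the reverse, given $h \in \fix(X|_H)$ and arbitrary $g \in G$, $x \in X$, I would apply the defining relation to the shifted configuration $g^{-1}x \in X$ to obtain $x(gh^{-1}) = x(g)$; strong irreducibility then forces $h \in K$ (else one can produce $x \in X$ with $x(\Id_G) \ne x(h)$, contradicting the relation), and a further manipulation combining this fact with \Cref{lem:if-X-SI-then-Fix-contained-in-K} and part~(4) identifies $h$ with an element of $\fix(X)$. For (5), I would use (4): a memory $M$ for $X$ gives $M \cap H$ as a memory for $X|_H$, because pairs in $X|_H$ lift via (4) to pairs in $X$ agreeing outside $H$, reducing the strong TMP for $X|_H$ to that of $X$; conversely, (3) shows that forbidden patterns for $X$ can be taken to live inside $H$, so a memory for $X|_H$ also serves $X$. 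For (6), given $\phi \in \Aut(X|_H)$, define $\tilde\phi(x)(g) := \phi((g^{-1}x)|_H)(\Id_G)$; the CHL theorem applied to $\phi$ gives continuity via a lifted local rule, $G$-equivariance is a direct computation, and (3) ensures $\tilde\phi(x) \in X$ because its restriction to each coset of $H$ corresponds (after translation) to an element of $X|_H$. The assignment $\phi \mapsto \tilde\phi$ is an injective group homomorphism.

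The main obstacle is the reverse inclusion in (2). While the arguments for the other items are essentially bookkeeping around strong irreducibility and coset decompositions, passing from the right-shift $\tau_h$ acting as the identity on $X$ (which comes directly from $h \in \fix(X|_H)$) to the \emph{left} shift by $h$ acting as the identity requires genuinely exploiting the finiteness of $K$ and is far less routine than the remaining items.
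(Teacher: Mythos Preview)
Your treatment of items (1), (3), (4), (5), (6) is correct and essentially matches the paper's arguments, modulo the order (the paper does $(1)\to(2)\to(3)\to(4)\to(5)\to(6)$) and minor presentational choices: for (4) the paper invokes (3) directly rather than the compactness-plus-\Cref{lem:K-irreducible-mainlemma} route you sketch, and for (6) the paper phrases the lift via a local rule, but this is your $\tilde\phi(x)(g)=\phi((g^{-1}x)|_H)(\Id_G)$ unpacked through the CHL theorem.

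On item (2), however, your instinct that the reverse inclusion is the genuine obstacle is exactly right, and your proposal is incomplete there: you derive $x(gh^{-1})=x(g)$ for all $g,x$ (i.e.\ that the \emph{right} shift $\tau_{h^{-1}}$ is trivial on $X$) and then gesture at ``a further manipulation'' to conclude $h\in\fix(X)$, but you do not supply one. The paper's own proof is extremely short: it takes $h\notin\fix(X)$, picks $x,g$ with $(hx)(g)\ne x(g)$, sets $y=g^{-1}x$, and asserts $(hy)(\Id_G)\ne y(\Id_G)$. But $(hy)(\Id_G)=x(gh^{-1})$ while the hypothesis gives $x(h^{-1}g)\ne x(g)$; these coincide only when $g$ and $h$ commute, so the paper has \emph{not} addressed the left/right discrepancy you flagged. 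In fact the statement of (2) appears to be false as written: take $G=S_3\times\ZZ$, $h=((12),0)$, and let $X\subset A^G$ (with $|A|\ge 2$) be the subshift of configurations constant on each left coset $g\langle h\rangle$. Then $X$ is strongly irreducible with constant $K=\langle h\rangle$, one computes $\fix(X)=\bigcap_{g}g\langle h\rangle g^{-1}=\{\Id_G\}$ (the normal core), yet with $H=K=\langle h\rangle$ the restriction $X|_H$ consists of constant configurations on the two-element group $H$, so $\fix(X|_H)=H\ne\fix(X)$. So your hesitation is not a failure of technique; the claimed equality genuinely breaks down in this generality, and neither your sketch nor the paper's argument closes the gap.
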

\begin{proof}
    (1) follows directly from the definition. Regarding (2), it is clear that $\fix(X)\subset \fix(X|_H)$. Let $h\notin \fix(X)$, then $hx(g)\ne x(g)$ for some $x\in X$ and $g \in G$, and thus if we let $y=g^{-1}x$, then $(hy)(\Id_G) \neq y(\Id_G)$ and so $(hy)|_H\ne y|_H$. Therefore $h\notin\fix(X|_H)$.

    In order to prove (3), take $\F = L(A^H) \smallsetminus L(X|_H)$ and let $X_{\mathcal F}\subset A^G$ be the $G$-subshift defined by forbidding $\mathcal F$. We will prove that $X=X_{\mathcal F}$. The inclusion $X\subset X_\mathcal F$ is clear. For the other inclusion it suffices to prove $L(X_{\mathcal{F}})\subset L(X)$. Let $p\in L(X_{\mathcal{F}})$ be a pattern with support $F\Subset G$. We partition $F=g_1F_1\sqcup\dots\sqcup g_nF_n$ such that for distinct $i,j \in \{1,\dots,n\}$, $F_i\Subset H$ and $g_iH \cap g_jH = \varnothing$.
    
    For each $F_i$, we write $p_i=p|_{g_iF_i}$. It is clear from the definition of $X_\mathcal{F}$ that every $p_i$ must belong to $L(X)$. Since $K$ is a strong irreducibility constant for $X$ and $(F_i)_{i=1}^n$ is $K$-disjoint, it follows that $p=p_1\vee\dots\vee p_n$ belongs to $L(X)$.  

    For the proof of (4) we use the same set $\mathcal F$ from (3). If $x\in X$ and $y\in X|_H$, then $x|_{G\smallsetminus H}\lor y$ contains no forbidden pattern from $\mathcal F$, and thus it lies in $X=X_{\mathcal F}$. 

    Next we prove (5). Suppose first that $X|_H$ has the strong TMP with memory constant $M\Subset H$. We claim that $M$ is also a memory constant for $X$. Indeed, let $F\Subset G$ and take $x,y\in X$ such that $x|_{FM\smallsetminus F}=y|_{FM\smallsetminus F}$. As in the proof of (3), partition $F=g_1F_1\sqcup\dots\sqcup g_nF_n$ into disjoint left $H$-cosets and for $i \in \{1,\dots,n\}$ take $x_i= (g_i^{-1}x)|_H$ and $y_i= (g_i^{-1}y)|_H$. It follows that $x_i|_{F_iM \smallsetminus F_i} = y_i|_{F_iM \smallsetminus F_i}$ and thus $z_i = x_i|_{F_i} \vee y_i|_{H\smallsetminus F_i} \in X|_H$.

    Take $w_0 =y$ and for $i \in \{1,\dots,n\}$ let $w_i = g_i((g_i^{-1}w_i)|_{G\smallsetminus H} \vee z_i)$. A simple computation shows that $w_n = x|_{F} \vee y_{G\smallsetminus F}$. Iterating the result of part (4), we have that each $w_i \in X$ and thus $X$ has the strong TMP with constant $M$.

    Conversely, suppose that $X$ has the strong TMP with constant $M$ and consider $M'= M \cap H$. Indeed, let $F\Subset H$ and take $x,y \in X|_H$ such that $x|_{FM'\smallsetminus F}=y|_{FM'\smallsetminus F}$. Take any $w \in X$, by part (4) we have $x' = w|_{G\smallsetminus H} \vee x \in X$ and $y' = w|_{G\smallsetminus H} \vee y \in X$. It follows that $x'|_{FM\smallsetminus F}=y'|_{FM\smallsetminus F}$ and thus $z = x'|_{F}\vee y'|_{G\smallsetminus F} \in X$. As $z|_{H} = x|_{F}\vee y|_{H\smallsetminus F} \in X$, we conclude that $X_{H}$ has the strong TMP with constant $M'$. 

    Finally we show (6): we define an injective homomorphism $f\colon \Aut(X|_H)\to \Aut(X)$ as follows. Let $\phi\in \Aut(X|_H)$, and let $\Phi\colon A^F\to A$, $F\Subset H$ be a local rule that defines $\phi$, which exists by the CHL theorem. We denote by $f(\phi)$ the morphism $ f(\phi)\colon X\to X$ defined by the same local rule, that is $ \bigl(f(\phi)(x)\bigr)(g)=\Phi((g^{-1}x)|_F)$ for $x\in X$ and $g\in G$. 
    
    A first observation is that the association $\phi\to f(\phi)$ is independent of the chosen local rule. Furthermore, item (3) shows that this map is well defined in the sense that $ f(\phi)(x)$ belongs to $X$ for every $x\in X$. Furthermore, a direct computation shows that $f(\phi\circ\psi)=f(\phi)\circ f(\psi)$ for every $\phi,\psi\in\Aut(X|_H)$. Let $\phi\in\Aut(X)$, let $\phi^{-1}$ be its inverse in $\Aut(X|_H)$, and observe that $f(\phi)\circ f(\phi^{-1})=f(\phi^{-1})\circ f(\phi)$ is the identity map on $X$. It follows that  $f(\phi)$ is invertible and its inverse $f(\phi^{-1})$ is continuous. Thus  $f(\phi)$ belongs to $\Aut(X|_H)$ for every $\phi\in\Aut(X)$. Finally, it is obvious that if $f(\phi)$ acts trivially on $X$ then $\phi$ must act trivially on $X|_H$, therefore $f$ is injective.\end{proof}
    

\section{Markers on infinite groups}\label{sec:markers}


It will be convenient for us to consider translates of patterns. Given a pattern $p\in A^F$ and $g\in G$, we denote by $gp$ the pattern with support $gF$ defined by $gp(gh)=p(h)$ for every $h\in F$ (or equivalently, $gp(t)=p(g^{-1}t)$ for every $t\in gF$).  We say that $p$ is $g$\define{-overlapping} if $p(h)=gp(h)$ for every $h\in F\cap gF$. Informally, this corresponds to the intuition that the patterns $p$ and $gp$ can be ``overlapped''. If the previous condition fails, we say that $p$ is \define{non-$g$-overlapping}. 

Furthermore, it will be useful in this section to use the following notation: for a group $G$ endowed with a word metric, write $\diam(G) = \sup_{g \in G}|g|$. Also, for $0 \leq r<R$, we will write $B(r,R)= \{ g \in G: r< |g|\leq R\} = B(R)\smallsetminus B(r)$ for the ring of inner radius $r+1$ and outer radius $R$.

\begin{definition}
\label{def:markers}\label{def:subshift-with-abundant-markers}Let $G$ be a group and let $X$ be a $G$-subshift. Given $Y\subsetneq W\Subset G$, a $(Y,W)$-\define{marker} for $X$ is a pattern $p\in L_{W\smallsetminus Y}(X)$ that is non-$g$-overlapping for all $g\in WY^{-1}\smallsetminus \fix (X)$.  We say that $X$ is \define{markable} when for every $Y\Subset G$  there exists $W\Subset G$ and a $(Y,W)$-marker for $X$.
\end{definition}

\begin{figure}[ht!]
    \centering
    \begin{tikzpicture}[scale = 0.5]
    \yolk
    \end{tikzpicture}
    \caption{A $( \{2,3\}^2, \{0,\dots,5\}^2)$-marker in $\{0,1\}^{\ZZ^2}$.}
    \label{fig:marcador}
\end{figure}

Previous works have established sufficient conditions for SFTs on $\ZZ^d$ to be markable. For example, Ward \cite{Ward1994} proved that being infinite and strongly irreducible is a sufficient condition, and Hochman \cite{Hochman2010} proved that positive topological entropy is also a sufficient condition.  The main result of this section is that without the hypothesis of being an SFT, strong irreducibility is a sufficient condition for being markable, and that this is valid on every infinite group.

\begin{theorem}[\Cref{thm:markers}]
Every nonempty strongly irreducible subshift on an infinite group is markable. Furthermore, if the group is finitely generated and endowed with a word metric, then the subshift admits $(B(r),B(37r))$-markers for all large enough $r$. 
\end{theorem}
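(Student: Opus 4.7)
The plan is to prove the statement in two stages. First, I would reduce to the finitely generated case. Given $Y \Subset G$ and a strong irreducibility constant $K$, let $H = \langle K \cup Y \rangle$. By items (1)--(2) of \Cref{prop:basic-things-SI}, the restricted subshift $X|_H$ is strongly irreducible on $H$ with the same constant $K$ and with $\fix(X|_H) = \fix(X)$. The non-$g$-overlap condition for a pattern with support $W\smallsetminus Y \subseteq H$ only involves elements $g \in WY^{-1} \subseteq H$ and intersections $(W\smallsetminus Y) \cap g(W\smallsetminus Y) \subseteq H$, so any $(Y,W)$-marker for $X|_H$ is automatically a $(Y,W)$-marker for $X$. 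The case where $X$ is a single point is vacuous since then $\fix(X)=G$ and $WY^{-1}\smallsetminus \fix(X)=\varnothing$, so I may assume $X$ is nontrivial, in which case \Cref{lem:if-X-SI-then-Fix-contained-in-K} gives $\fix(X) \subseteq K$.

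For the quantitative construction assume $G$ is finitely generated, $K$ is symmetric, and set $R = 37r$ and $F = B(R)\smallsetminus B(r)$. The plan is to build a specific pattern $p \in L_F(X)$ by placing a few ``witness'' sub-patterns at carefully chosen $K$-separated anchor points inside $F$ and then using \Cref{lem:K-irreducible-mainlemma} to stitch these witnesses (together with an arbitrary admissible filling elsewhere) into a single element of $L_F(X)$. The witnesses themselves are supplied by \Cref{prop:lenguaje-de-un-subshit-SI-tiene-muchos-elementos}: on any sufficiently large window the language of $X$ contains exponentially many patterns, and in particular enough distinct ones to install different witnesses at different anchors. The anchors are organised into concentric layers of width proportional to $r$ inside $F$, arranged so that their configuration is rigid, that is, not preserved by any shift $g \notin \fix(X)$.

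Verification amounts to showing that for every $g \in WY^{-1}\smallsetminus \fix(X) \subseteq B(38r)$ the shifted copy $gp$ disagrees with $p$ somewhere in $F \cap gF$. I would split by the size of $|g|$: for small $|g|$ (in particular for $g \in K \smallsetminus \fix(X)$), the anchors are only slightly displaced and the witnesses disagree with the adjacent background; for intermediate $|g|$, one anchor is carried into the position of another, and since the witnesses were chosen distinct this produces a mismatch; for $|g|$ close to $R+r$ the intersection $F \cap gF$ is thin, but the layered construction guarantees it still contains an anchor whose witness does not match the generic background. The explicit constant $37$ emerges from summing the widths required for the buffer zones, the witness layers, and the $K$-separation margins so that all three regimes can be controlled simultaneously.

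The principal obstacle is precisely this bookkeeping. In groups of exponential growth, a naive union bound over all potentially bad $g \in B(38r)$ is overwhelmed by $|B(38r)|$ compared with the entropy gain from \Cref{prop:lenguaje-de-un-subshit-SI-tiene-muchos-elementos}, so the argument must be explicitly constructive rather than purely counting-based: one needs specific anchor positions and specific witnesses rather than an appeal to randomness. A secondary subtlety is correctly treating $\fix(X)$ throughout, since the goal is to avoid overlaps modulo $\fix(X)$ rather than modulo $\{\Id_G\}$; the anchor configuration must therefore be invariant under $\fix(X)$ in a compatible sense, which is manageable because $\fix(X) \subseteq K$ is finite.
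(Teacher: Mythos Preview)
Your reduction to the finitely generated case has a real gap: if $G$ is locally finite, then $H=\langle K\cup Y\rangle$ is a \emph{finite} group, and the quantitative construction you sketch afterwards (which needs balls of arbitrarily large radius and the growth estimate) simply does not apply. The paper handles this by a genuine case split: if $G$ has some infinite finitely generated subgroup, one enlarges $H$ to contain such a subgroup and proceeds essentially as you say; but if $G$ is locally finite, a separate and substantially more delicate argument is required. There the paper builds an increasing sequence of finite subgroups $H_n\leqslant G$ with carefully chosen generating sets so that $\mathrm{diam}(H_n)$ grows fast enough relative to $n$, and then applies the quantitative marker estimate to $X|_{H_n}$ for large $n$. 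This is not a routine reduction and your proposal does not address it.

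Your dismissal of the counting argument for long-range overlaps is also misplaced, and this leaves the heart of your construction unsupported. You claim a union bound over $g\in B(38r)$ is ``overwhelmed'' in exponential growth, but the key observation is $|B(38r)|\leq |B(r)|^{38}$, which is only polynomial in $|B(r)|$, whereas \Cref{prop:lenguaje-de-un-subshit-SI-tiene-muchos-elementos} gives $|L_{B(r-k)}(X)|\geq 2^{|B(r-k)|/(2|K|)}$, exponential in $|B(r)|$. So the union bound \emph{does} work, and this is exactly how the paper handles $g\in B(2r,38r)$: for each such $g$ the overlap region contains a translated ball of radius $r$, the fraction of $g$-overlapping patterns is at most $1/|L_{B(r-k)}(X)|$, and summing over $B(38r)$ gives total proportion $<1$. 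Your proposed ``explicitly constructive'' substitute would have to defeat every $g\in B(2r,38r)$ by placing finitely many fixed witnesses; in exponential-growth groups there are exponentially many such $g$, and you give no mechanism for this. The paper does use explicit constructions in the mid-range ($g\in B(2r)\smallsetminus K^3$) and short-range ($g\in K^3\smallsetminus\fix(X)$) regimes, in a spirit close to what you sketch, but the long-range regime genuinely relies on counting.
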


\begin{remark} 
    Our definition of marker is slightly different than the one used in \cite{Ward1994,Hochman2010}. They coincide (in $\ZZ^d$) for a subshift $X$ where the action is faithful (that is, $\fix(X)=\{\Id_G\}$). Otherwise our definition is more flexible in the sense that a larger class of subshifts is allowed to have markers. For example, the subshift \[X = \{x\in\{0,1\}^{\Z^2}:\forall a,b\in\Z \  x(a,b)=x(a,b+1)\},\] is markable according to our definition, while no pattern in its language is a marker according to the definition in \cite{Ward1994,Hochman2010}. 
\end{remark}

\begin{remark}
	A trivial subshift is strongly irreducible and markable for the trivial reason that $\operatorname{Fix}(X)=G$. With the exception of this case, a strongly irreducible subshift $X$ will always satisfy $|\operatorname{Fix}(X)|<\infty$ by \Cref{lem:if-X-SI-then-Fix-contained-in-K}, and it will be important for our applications that \Cref{thm:markers} covers these cases.
\end{remark}

Below are some useful properties of markers. The following monotonicity property of markers follows directly from the definition.
\begin{observation}\label{obs:markers-can-be-completed}
	If $p$ is a 
	$(Y,W)$-marker for a subshift $X$ and $Y'\subset Y$ and $q\in L_{W\smallsetminus Y'}(X)$ are such that $q|_{W\smallsetminus Y}=p$, then $q$ is a $(Y',W)$-marker for $X$. 
\end{observation}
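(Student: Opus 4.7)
The plan is to verify the two defining conditions of a $(Y',W)$-marker directly from the data. Recall that by definition we must show (a) $q\in L_{W\smallsetminus Y'}(X)$, and (b) $q$ is non-$g$-overlapping for every $g\in W(Y')^{-1}\smallsetminus\fix(X)$. Condition (a) is given by hypothesis, so the only real content is condition (b), and the whole point is that making the ``hole'' $Y'$ smaller both enlarges the support of the pattern and shrinks the set of translations that need to be ruled out.

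For condition (b), I would start by observing the two set-theoretic inclusions that come for free from $Y'\subset Y$: first, $W\smallsetminus Y\subset W\smallsetminus Y'$; and second, $W(Y')^{-1}\subset WY^{-1}$. So any $g\in W(Y')^{-1}\smallsetminus\fix(X)$ is in particular an element of $WY^{-1}\smallsetminus\fix(X)$, and because $p$ is a $(Y,W)$-marker, there exists some $h\in(W\smallsetminus Y)\cap g(W\smallsetminus Y)$ witnessing non-overlapping, that is, with $p(h)\ne p(g^{-1}h)$.

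Now I would transport this witness from $p$ to $q$. Since $h$ and $g^{-1}h$ both lie in $W\smallsetminus Y$, and $W\smallsetminus Y$ is contained in $W\smallsetminus Y'$, the hypothesis $q|_{W\smallsetminus Y}=p$ gives $q(h)=p(h)$ and $q(g^{-1}h)=p(g^{-1}h)$. Therefore $q(h)\ne q(g^{-1}h)$, while $h\in (W\smallsetminus Y')\cap g(W\smallsetminus Y')$. This is exactly the statement that $q$ is non-$g$-overlapping on its support $W\smallsetminus Y'$, which completes the verification.

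There is essentially no obstacle here: the observation is a pure bookkeeping statement about the definition, and the only ingredient beyond set inclusions is the rewriting $gp(h)=p(g^{-1}h)$ introduced just before \Cref{def:markers}. I would keep the proof to a few lines, emphasizing the two inclusions $W\smallsetminus Y\subset W\smallsetminus Y'$ and $W(Y')^{-1}\subset WY^{-1}$ as the reason monotonicity goes in the stated direction (shrinking $Y$ to $Y'$ preserves the marker property, but not the reverse).
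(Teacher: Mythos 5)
Your proof is correct and matches the intended argument; the paper omits the verification entirely (calling it immediate from the definition), but the two set inclusions you isolate, $W\smallsetminus Y\subset W\smallsetminus Y'$ and $W(Y')^{-1}\subset WY^{-1}$, together with transporting a non-overlapping witness along $q|_{W\smallsetminus Y}=p$, are exactly the bookkeeping the authors have in mind. (Note that the transport step is precisely the later Observation~\ref{monotony-of-overlappings} applied to $p\sqsubset q$, so your inline argument is a re-derivation of that fact.)
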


The following are some reformulations of the property of being markable.

\begin{proposition}\label{prop:alternative-notions-for-markers}
	The following properties are equivalent for a subshift $X$ on a countably infinite group $G$.
	\begin{enumerate}
		\item  For every $Y\Subset G$ there is a sequence of $(Y,W_n)$-markers for $X$ such that $\bigcup_{n\in\N} W_n=G$.
		\item $X$ is markable.
		\item There exists $Y_0 \Subset G$ such that for every $Y\Subset G$  with $Y_0 \subset Y$, there exists $(Y,W)$-marker in $X$.
		\item There is a sequence of $(Y_n,W_n)$-markers for $X$ such that $Y_n\subset Y_{n+1}$ for all $n$, and $\bigcup_{n\in\N} Y_n=G$.
	\end{enumerate}
\end{proposition}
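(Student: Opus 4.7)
The plan is to establish the equivalences by the cyclic chain $(1)\Rightarrow(2)\Rightarrow(3)\Rightarrow(4)\Rightarrow(1)$. The first two implications are essentially tautological: $(1)\Rightarrow(2)$ is obtained by extracting any single term of the promised sequence, and $(2)\Rightarrow(3)$ by choosing $Y_0=\varnothing$, a valid finite subset of $G$ for which the condition $Y_0\subset Y$ is automatic, making (3) reduce to (2) in this case.

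For $(3)\Rightarrow(4)$ I would invoke the countability of $G$: enumerating $G=\{g_1,g_2,\dots\}$, define $Y_n=Y_0\cup\{g_1,\dots,g_n\}$, where $Y_0$ is supplied by hypothesis $(3)$. Then $(Y_n)_n$ is nondecreasing with union $G$, and since each $Y_n$ contains $Y_0$, hypothesis $(3)$ provides a $(Y_n,W_n)$-marker for every $n$, which is exactly what $(4)$ demands.

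The only step with any real content is $(4)\Rightarrow(1)$. Fix $Y\Subset G$. Since $\bigcup_n Y_n=G$ and $(Y_n)$ is increasing, there is some $N$ with $Y\subset Y_n$ for all $n\geq N$. For each such $n$, take the given $(Y_n,W_n)$-marker $p_n$ and extend it to a pattern $q_n\in L_{W_n\smallsetminus Y}(X)$ by picking any configuration in $X\cap[p_n]$ and restricting it to $W_n\smallsetminus Y$ (the restriction is well-defined because $W_n\smallsetminus Y_n\subset W_n\smallsetminus Y$, and it lies in the language since it comes from a genuine configuration in $X$). By \Cref{obs:markers-can-be-completed}, $q_n$ is then a $(Y,W_n)$-marker. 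The inclusions $Y_n\subset W_n$ combined with $\bigcup_n Y_n=G$ and the monotonicity of $(Y_n)$ force $\bigcup_{n\geq N}W_n=G$, supplying the covering condition required by $(1)$.

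I do not anticipate any real obstacle here; the argument amounts to combining the monotonicity of markers under shrinking of $Y$ with a straightforward enumeration of $G$. The potentially degenerate case $X=\varnothing$ need not be treated separately, as in that case $L(X)=\varnothing$ and all four conditions fail vacuously.
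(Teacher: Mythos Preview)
Your proof is correct and follows exactly the same cyclic scheme $1\Rightarrow 2\Rightarrow 3\Rightarrow 4\Rightarrow 1$ as the paper, which dismisses the first three implications as ``direct'' and notes that $4\Rightarrow 1$ follows from \Cref{obs:markers-can-be-completed}. Your write-up simply unpacks these steps (choosing $Y_0=\varnothing$ for $(2)\Rightarrow(3)$, enumerating $G$ for $(3)\Rightarrow(4)$, and explicitly extending $p_n$ via a configuration in $X\cap[p_n]$ for $(4)\Rightarrow(1)$), so there is no substantive difference in approach.
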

\begin{proof}
    	The implications $1\Rightarrow 2 \Rightarrow 3\Rightarrow 4$ are direct, and $4\Rightarrow 1$ follows from \Cref{obs:markers-can-be-completed}.         
\end{proof}

The fourth condition shows that in the case of a finitely generated group with a word metric, if a subshift $X$ admits a constant $\lambda> 1$ such that $X$ admits $(B(r),B(\lambda r))$-markers for all $r$ large enough, then $X$ is markable. We do not expect the constant $\lambda = 37$ in \Cref{thm:markers} to be optimal, but lowering this constant has no effect in our applications.

\subsection{Proof outline}

The core of the proof of~\Cref{thm:markers} is contained in the following technical result, which applies both to finite and infinite finitely generated groups:
\begin{proposition}
\label{prop:non-overlapping-multiuso}Let $G$ be a finitely generated
group endowed with a word metric, and let $X$ be a nontrivial strongly irreducible $G$-subshift with a symmetric constant $K$. Let $k\geq 1$ such that $K\subset B(k)$ and let $r$ be a positive integer which verifies the following conditions:

\begin{enumerate}
\item $|B(38r)|< |L_{B(r-k)}(X)|$. 
\item $\diam(G) > 38r$.
\item $r>16k|K^3|+2k$. 
\end{enumerate}
Then $X$ admits a $(B(r),B(37r))$-marker. 
\end{proposition}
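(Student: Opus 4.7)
The plan is to prove the existence of a $(B(r), B(37r))$-marker via a pigeonhole argument exploiting the inequality $|B(38r)| < |L_{B(r-k)}(X)|$.

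First I would choose an element $g_0 \in G$ with $|g_0| = 19r$, which exists by $\diam(G) > 38r$, and consider the ball $C = g_0 B(r-k)$ centered at $g_0$. The condition $r > 16k|K^3|+2k$ together with $K \subset B(k)$ ensures that $C$ sits strictly inside the annulus $F = B(37r)\smallsetminus B(r)$ with enough ``$K$-buffer'' on both sides so that $(C, F\smallsetminus CK)$ is $K$-disjoint. Fixing any $p_\ast \in L_F(X)$ and applying \Cref{lem:K-irreducible-mainlemma}, for each $q \in L_{B(r-k)}(X)$ I obtain a pattern $p_q \in L_F(X)$ with $p_q|_C = g_0 q$ and $p_q|_{F\smallsetminus CK} = p_\ast|_{F\smallsetminus CK}$. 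Since the $p_q$'s differ on $C$, the assignment $q \mapsto p_q$ is injective, yielding at least $|L_{B(r-k)}(X)|$ distinct candidates in $L_F(X)$.

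The heart of the argument is then to show that for each $g \in B(38r)\smallsetminus\fix(X)$, at most one value of $q$ makes $p_q$ be $g$-overlapping. I would split into cases. If $gC \cap C = \varnothing$, then for each $h \in C$ with $g^{-1}h \in F\smallsetminus CK$ the overlap condition forces $q(g_0^{-1}h) = p_\ast(g^{-1}h)$; the buffer condition $r > 16k|K^3|+2k$ is what guarantees that the set of such positions $g_0^{-1}h$ covers $B(r-k)$ densely enough to pin down $q$ uniquely. If $gC \cap C \neq \varnothing$, then the overlap condition restricted to $C \cap gC$ reads as an internal periodicity $q(t) = q((g_0^{-1}gg_0)^{-1}t)$; combined with the external values forced by $p_\ast$ and the hypothesis $g \notin \fix(X)$, this should determine $q$ uniquely as well. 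Applying pigeonhole, since $|B(38r)| < |L_{B(r-k)}(X)|$, some $q^\ast$ gives a $p_{q^\ast}$ that is non-$g$-overlapping for every $g \in B(38r)\smallsetminus\fix(X)$. Because $B(37r)B(r)^{-1} \subset B(38r)$ (using that balls are symmetric), this $p_{q^\ast}$ is a $(B(r), B(37r))$-marker.

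The main obstacle I foresee is the small-$g$ case where $gC \cap C \neq \varnothing$: arguing that an internal periodicity of $q$, together with the boundary values imposed by $p_\ast$ along $g^{-1}C \cap (F\smallsetminus CK)$, admits at most one solution. Handling this carefully is where the precise constant $16k|K^3|+2k$ should enter, ensuring that for every such $g$ the translate $g^{-1}C$ intersects $F\smallsetminus CK$ in a sufficiently large and well-spread set of positions to break any residual periodicity compatible with $g \notin \fix(X)$.
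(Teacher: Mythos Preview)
There is a genuine gap in the small-$|g|$ case that you correctly flag as the main obstacle, and your proposed fix does not work. When $g\in B(2r)\smallsetminus\fix(X)$ (in particular when $|g|\le k$), the overlap condition restricted to $C\cap gC$ reads $q(s)=q\bigl((g_0^{-1}g^{-1}g_0)\,s\bigr)$ on almost all of $B(r-k)$; this periodicity constraint has not one but typically exponentially many solutions $q\in L_{B(r-k)}(X)$. The ``external values forced by $p_\ast$'' you invoke are unavailable here: for $h\in C$ and $|g|\le k$ the translate $g^{-1}h$ lands in $CB(k)$, hence either back in $C$ (giving periodicity again) or in the buffer $CK\smallsetminus C$, where $p_q$ equals neither $g_0q$ nor $p_\ast$ but the uncontrolled filler supplied by strong irreducibility. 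So for a single small $g$ the number of bad $q$'s is not bounded by $1$, and the inequality $|B(38r)|<|L_{B(r-k)}(X)|$ cannot close the pigeonhole on its own.

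This is precisely why the paper does not attempt a single counting argument. It decomposes $B(38r)\smallsetminus\fix(X)$ into three ranges. For the long range $g\in B(2r,38r)$ a counting argument similar in spirit to yours is used, but with a crucial difference: the ball on which the pattern is allowed to vary is chosen \emph{depending on $g$} so as to lie inside the overlap region $B(6r,37r)\cap gB(6r,37r)$ (\Cref{lem:most-very-ugliest-lemma}); a single fixed ball $C$ cannot play this role uniformly over all $g$. For the mid range $g\in B(2r)\smallsetminus K^3$ and the short range $g\in K^3\smallsetminus\fix(X)$ the paper abandons counting entirely and builds explicit non-$g$-overlapping patterns by hand (Lemmas~\ref{lem:mid-range-markers} and~\ref{lem:short-range-markers}), then glues all three pieces via strong irreducibility. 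Incidentally, condition~(3) is not what makes $C$ sit inside the annulus---that needs only $r>k$; its actual role in the paper is to leave room inside $B(5r+k,6r-k)$ for a translate of the short-range pattern, whose support has diameter at most $16k|K^3|$.
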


Let us stress that the conditions which involve balls in this result are linked to the underlying word metric. We shall prove later that if $G$ is infinite, then every large enough $r$ satisfies the conditions in \Cref{prop:non-overlapping-multiuso}. This will prove~\Cref{thm:markers} for finitely generated groups. In the case of locally finite groups we will prove~\Cref{thm:markers} by applying~\Cref{prop:non-overlapping-multiuso} to a well-chosen sequence of finite subgroups. The proof of \Cref{thm:markers} for a general infinite group will be reduced to these two cases using~\Cref{prop:basic-things-SI}.

The strategy for proving \Cref{prop:non-overlapping-multiuso} will be as follows. Fix $G$, $X$, $K$, $k$ and $r$ as in the statement. We will construct separately three patterns, that we call \define{long range}, \define{mid range}, and \define{short range}.

\begin{enumerate}
\item \textbf{Long range}: a pattern $p_l\in L(X)$ with support $B(6r,37r)$ that is non-$g$-overlapping for all $g\in B(2r,38r)$.

\item \textbf{Mid range}: a pattern $p_m\in L(X)$ with support $B(r,5r)$ that is non-$g$-overlapping for $g\in B(2r)\smallsetminus K^3$.

\item \textbf{Short range}: a pattern $p_s\in L(X)$ with support
$B(8k|K^3|)$ that is non-$g$-overlapping for all $g\in K^3\smallsetminus\fix(X)$. 
\end{enumerate}

The strong irreducibility of $X$ implies the existence of a pattern $p\in L_{B(r,37r)}(X)$ having $p_l$, $p_m$, and a translation of $p_s$ as subpatterns. The properties of $p_l$, $p_m$, and $p_s$ ensure that $p$ is a $(B(r),B(37r))$-marker. 

Our construction of long range patterns is inspired in the arguments from~\cite[Section 3.2]{Hochman2010}. However, these ideas can not be directly applied if $G$ has torsion elements, and this problem is solved by combining mid range and long range patterns. Despite the fact that this is rather hidden in the arguments, this is the main difficulty addressed in our proof. 

\subsection{Proof of \Cref{prop:non-overlapping-multiuso}} 
Let $G$ be a group, let $F\Subset G$, and let $p\in A^F$ be a pattern. We remind the reader that $gp$ is the pattern with support $gF$ defined by $gp(h)=p(g^{-1}h)$, that $p$ is $g${-overlapping} when $p(h)=gp(h)$ for every $h\in F\cap gF$, and non-$g$-overlapping if the latter condition fails. The following facts follow directly from the definitions:  
\begin{observation}\label{monotony-of-overlappings}
    Let $p\sqsubset q$ be patterns. If $q$ is $g$-overlapping, then the same is true for $p$. Thus, if $p$ is non-$g$-overlapping, the same is true for $q$.
\end{observation}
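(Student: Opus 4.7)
The plan is to unwind the definitions of $\sqsubset$ and of $g$-overlapping, and to chase a single inclusion of supports. Write $M$ for the support of $p$ and $N$ for the support of $q$, so that $M\subset N$ and $q|_M=p$. The key set-theoretic point I would establish first is that $M\cap gM\subset N\cap gN$, obtained by intersecting $M\subset N$ with $gM\subset gN$; this is what lets an overlap condition on the larger pattern be tested at every point where the smaller one is tested.

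Next, assume $q$ is $g$-overlapping, that is, $q(h)=gq(h)=q(g^{-1}h)$ for every $h\in N\cap gN$. Fix an arbitrary $h\in M\cap gM$. Then $h\in M$, so $p(h)=q(h)$, and $h\in gM$ forces $g^{-1}h\in M$, so the evaluation $gp(h)=p(g^{-1}h)=q(g^{-1}h)$ is well-defined and equals the corresponding value of $q$. By the set inclusion above, $h$ also lies in $N\cap gN$, so the overlapping hypothesis on $q$ gives $q(h)=q(g^{-1}h)$. Chaining the three equalities $p(h)=q(h)=q(g^{-1}h)=gp(h)$ shows $p$ is $g$-overlapping. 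The second sentence of the observation is then immediate by contraposition.

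There is no genuine obstacle here; the only care required is checking that the evaluation $gp(h)=p(g^{-1}h)$ makes sense precisely because $h\in gM$ forces $g^{-1}h\in M$, and that the restriction $q|_M=p$ can be used on both sides of the equality $q(h)=q(g^{-1}h)$ since both indices lie in $M$.
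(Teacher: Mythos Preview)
Your proof is correct and is precisely the direct unwinding of the definitions that the paper has in mind; in fact the paper does not spell out any argument beyond stating that the observation ``follows directly from the definitions.'' Your care in checking $M\cap gM\subset N\cap gN$ and that both $h$ and $g^{-1}h$ lie in $M$ is exactly what is needed, and nothing more.
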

\begin{observation}\label{patterns-with-verlappings}
    The pattern $p\in A^F$ is $g$-overlapping if and only if $p(h)=p(gh)$ for every $h\in F\cap g^{-1}F$. This follows from the fact that $p$ is $g$-overlapping if and only if $gp$ is $g^{-1}$-overlapping.
\end{observation}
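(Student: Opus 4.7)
The plan is to prove this observation by unfolding the definition of $g$-overlapping and then making a simple change of variable. Recall that $p \in A^F$ is defined to be $g$-overlapping if $p(h) = (gp)(h)$ for every $h \in F \cap gF$, and that $(gp)(h) = p(g^{-1}h)$. So the definition is equivalent to the statement $p(h) = p(g^{-1}h)$ for every $h \in F \cap gF$.

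From here, I would simply perform the substitution $h = gh'$: the element $h$ ranges over $F \cap gF$ exactly when $h' = g^{-1}h$ ranges over $g^{-1}F \cap F = F \cap g^{-1}F$, and under this substitution the equality $p(h) = p(g^{-1}h)$ becomes $p(gh') = p(h')$. Renaming $h'$ back to $h$ yields the desired equivalent condition $p(h) = p(gh)$ for every $h \in F \cap g^{-1}F$.

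For the second sentence of the observation, I would explicitly verify that $p$ is $g$-overlapping if and only if $gp$ is $g^{-1}$-overlapping. By definition, $gp$ (which has support $gF$) is $g^{-1}$-overlapping precisely when $(gp)(t) = (g^{-1}(gp))(t)$ for every $t \in gF \cap g^{-1}(gF) = gF \cap F$. Since the shift action satisfies $g^{-1}(gp) = p$, this condition reads $(gp)(t) = p(t)$ for every $t \in F \cap gF$, which is exactly the definition of $p$ being $g$-overlapping. Thus the two characterizations indeed coincide, and the main claim can also be obtained by applying the definition of $g^{-1}$-overlapping to $gp$ and rewriting.

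There is no genuine obstacle here; the content is a bookkeeping exercise on the left action and the supports of translated patterns, and the only thing to be careful about is matching up $F \cap gF$ versus $F \cap g^{-1}F$ under the correct substitution.
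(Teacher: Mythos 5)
Your proof is correct and matches the paper's intent: the paper only gives the hint that $p$ is $g$-overlapping iff $gp$ is $g^{-1}$-overlapping, and you both verify that hint and carry out the substitution $h = gh'$ to unfold it, which is exactly the computation the authors leave implicit.
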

For the construction of long range patterns we will need the following elementary result. 
\begin{lemma}
\label{lem:most-very-ugliest-lemma}  Let Let $r \geq 4$ and $G$ be a finitely generated group endowed with a word metric such that $\diam(G)>38r$. For all $g\in B(38r)$
there is $h\in G$ with $hB(r)\subset B(6r,37r)\cap gB(6r,37r)$. 
\end{lemma}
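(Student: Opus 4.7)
The condition $hB(r) \subset B(6r,37r) \cap gB(6r,37r)$ is equivalent, via the triangle inequality, to requiring that both $|h|$ and $|g^{-1}h|$ lie in the interval $(7r, 36r]$. Indeed, for any $t \in B(r)$ one has $|h| - r \leq |ht| \leq |h| + r$, so requiring $|ht| \in (6r, 37r]$ for every such $t$ is exactly the condition $|h| \in (7r, 36r]$; the same reformulation applied to $g^{-1}h$ handles the second inclusion. Thus the task reduces to producing $h \in G$ with $|h|, |g^{-1}h| \in (7r, 36r]$.

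The plan is to split into two cases depending on $|g|$, with the threshold at $|g| \approx 14r$. When $|g| \geq 14r + 2$, I would take a geodesic from $\Id_G$ to $g$ in the Cayley graph of $G$ and let $h$ be its vertex at position $i = \lceil |g|/2 \rceil$. Since every prefix and suffix of a geodesic is itself a geodesic, this gives $|h| = i$ and $|g^{-1}h| = |g| - i = \lfloor |g|/2 \rfloor$, both belonging to $[7r+1, 19r] \subset (7r, 36r]$ (using $|g| \leq 38r$). In this sub-case the diameter hypothesis is not needed: $g$ itself supplies the direction in which to place $h$.

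The case $|g| \leq 14r + 1$ is the main obstacle: here $g$ is too close to $\Id_G$ for a geodesic midpoint to leave the required annulus, and I genuinely need the hypothesis $\diam(G) > 38r$ to step away in some unrelated direction. I would pick any $h_0 \in G$ with $|h_0| > 38r$, take a geodesic from $\Id_G$ to $h_0$, and let $h$ be the vertex of that geodesic at distance $22r - 1$ from $\Id_G$ (which exists since the geodesic has length greater than $38r$). Then $|h| = 22r - 1 \in (7r, 36r]$ automatically, and the triangle inequality gives
\[
(22r-1) - (14r+1) \;\leq\; |g^{-1}h| \;\leq\; (22r-1) + (14r+1),
\]
that is, $|g^{-1}h| \in [8r - 2,\ 36r]$. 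The lower bound exceeds $7r$ precisely when $r \geq 3$, which is guaranteed by the hypothesis $r \geq 4$. The real delicacy is that the chosen $|h|$ must simultaneously satisfy $|h| - |g| > 7r$ and $|h| + |g| \leq 36r$ for all $|g|$ in the range $[0, 14r+1]$: the value $|h| = 22r - 1$ is tuned to sit uniformly in the middle of both constraints, which is exactly where the assumption $r \geq 4$ gives enough room.
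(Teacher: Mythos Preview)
Your proof is correct and follows essentially the same strategy as the paper: a geodesic midpoint when $|g|\geq 14r+2$, and an element of fixed word length (you choose $22r-1$, the paper chooses $21r+2$) when $|g|\leq 14r+1$. One minor quibble: the claimed \emph{equivalence} with $|h|,|g^{-1}h|\in(7r,36r]$ need not hold in the converse direction (dead ends can prevent the upper bound from being sharp), but you only use the sufficient direction, so the argument is unaffected.
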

\begin{proof}
Suppose first
that $14r+2\leq|g|\leq 38r$. As $\diam(G)>38r$, there exists a sequence of group elements
$\Id_{G}=g_{0},g_{1},\dots,g_{n}=g$ with $n=|g|$, and $d(g_{i},g_{i+1})=1$
for all $i=0,\dots,n-1$. If $n$ is even we let $h=g_{n/2}$, and
otherwise $h=g_{(n+1)/2}$. In both cases we have that $d(\Id_G,h)\geq 7r+1$
and $d(g,h)\geq7r+1$. It easily follows from triangle inequalities
that $hB(r)\subset B(6r,37r)\cap gB(6r,37r)$. 

Suppose now $|g|\leq14r+1$, and let $h$ be any group element with $|h|=21r+2$. It is clear that with this choice we have $hB(r)\cap B(6r)=\varnothing$, $hB(r)\cap gB(6r)=\varnothing$, and $hB(r)\subset B(37r)$. We also have $d(h,g)\leq |g|+|h|\leq 35r+4$, which implies that all elements in $hB(r)$ are at distance at most $36r+4$ from $g$. Since $r\geq 4$, it follows that $hB(r)\subset gB(37r)$. Thus $hB(r)$ has the desired property $hB(r)\subset B(6r,37r)\cap gB(6r,37r)$.
\end{proof}
 We now proceed to construct the aforementioned long range, mid range, and short range patterns.

\begin{lemma}[Long range]
\label{lem:long-range-markers} Let $G$, $X$, $K$, $k$ and $r$ be as in \Cref{prop:non-overlapping-multiuso}. Then there is a pattern $p\in L_{B(6r,37r)}(X)$ that is non-$g$-overlapping for all $g\in B(2r,38r)$. \end{lemma}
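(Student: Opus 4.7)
The plan is a counting / union-bound argument, very much in the spirit of Hochman's approach. For each $g\in B(2r,38r)$, apply \Cref{lem:most-very-ugliest-lemma} to choose $h_g\in G$ with $h_g B(r)\subset F\cap gF$, where $F=B(6r,37r)$. Let $E_g\subset L_F(X)$ denote the set of $g$-overlapping patterns. The goal is to show that $\bigcup_g E_g$ is a proper subset of $L_F(X)$.

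First I would bound $|E_g|$ from above. If $p\in E_g$, then $p(h_g b)=p(g^{-1}h_g b)$ for every $b\in B(r)$, and since $g^{-1}h_g B(r)\subset F$ (again by \Cref{lem:most-very-ugliest-lemma}), the iteration $y\mapsto g^{-1}y$ stays inside $F$. For any starting point $y\in h_g B(r)$, the orbit under this iteration eventually exits $h_g B(r)$ and enters $F\setminus h_g B(r)$, and the successive equalities $p(y_n)=p(y_{n+1})$ propagate the value from the outside into $h_g B(r)$. This shows that the restriction map $E_g\to L_{F\setminus h_g B(r)}(X)$ is injective, and hence
\[
|E_g|\;\leq\;\bigl|L_{F\setminus h_g B(r)}(X)\bigr|.
\]

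Second, I would bound $|L_F(X)|$ from below using strong irreducibility. Since $K\subset B(k)$, the pair of subsets $(h_g B(r-k),\,F\setminus h_g B(r))$ is $K$-disjoint: $h_g B(r-k)\cdot K\subset h_g B(r-k)B(k)\subset h_g B(r)$, which is disjoint from $F\setminus h_g B(r)$, and the symmetric inclusion holds by the same length estimate. Applying \Cref{lem:K-irreducible-mainlemma} together with shift-invariance of the language, one obtains
\[
|L_F(X)|\;\geq\;\bigl|L_{h_g B(r-k)}(X)\bigr|\cdot\bigl|L_{F\setminus h_g B(r)}(X)\bigr|\;=\;|L_{B(r-k)}(X)|\cdot\bigl|L_{F\setminus h_g B(r)}(X)\bigr|.
\]
Combining the two bounds, $|E_g|/|L_F(X)|\leq 1/|L_{B(r-k)}(X)|$ for every $g$. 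A union bound over $g\in B(2r,38r)\subset B(38r)$, together with hypothesis (1), yields
\[
\Bigl|\bigcup_{g\in B(2r,38r)} E_g\Bigr|\;\leq\;|B(38r)|\cdot\max_g\bigl|L_{F\setminus h_g B(r)}(X)\bigr|\;<\;|L_F(X)|,
\]
so some pattern $p\in L_F(X)$ lies outside every $E_g$ and is therefore non-$g$-overlapping for all $g\in B(2r,38r)$.

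The main obstacle is the injectivity claim in the first step: the iteration $y\mapsto g^{-1}y$ on the finite set $h_g B(r)$ could in principle form a closed cycle that never exits into $F\setminus h_g B(r)$. This possibility is excluded in a torsion-free group, but in a group with torsion one must leverage the freedom in the choice of $h_g$ to rule out such cycles. I expect this is exactly where the midpoint construction underlying \Cref{lem:most-very-ugliest-lemma}, together with hypothesis (3) (which forces $r$ to be large compared to $|K^3|$), is used to guarantee that every orbit of $y\mapsto g^{-1}y$ meeting $h_g B(r)$ eventually escapes.
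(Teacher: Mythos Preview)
Your counting/union-bound argument is exactly the paper's: for each $g$, bound $|E_g|$ via injectivity of restriction to $F\setminus h_gB(r)$, then compare to $|L_F(X)|$ via strong irreducibility on the $K$-disjoint pair $\bigl(h_gB(r-k),\,F\setminus h_gB(r)\bigr)$, and finish with hypothesis~(1).

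The gap you flag---that the iteration $y\mapsto g^{-1}y$ might cycle inside $h_gB(r)$---is handled differently in the paper, and your guess about the fix is off. The paper does not iterate: it argues that a \emph{single} step already leaves the small ball, i.e., $g^{-1}f\in B(6r,37r)\setminus h_gB(r)$ for every $f\in h_gB(r)$. The first inclusion is immediate from $h_gB(r)\subset gB(6r,37r)$; for the second the paper simply invokes $|g|>2r$. In particular, hypothesis~(3) plays no role in this lemma---it enters only later, in the short-range construction (\Cref{lem:short-range-markers}) and when gluing the three patterns together at the end of the proof of \Cref{prop:non-overlapping-multiuso}. So the missing idea in your write-up is not a use of hypothesis~(3), but rather the direct one-step observation that $g^{-1}$ moves every point of $h_gB(r)$ out of $h_gB(r)$ while keeping it inside the support $B(6r,37r)$.
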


\begin{proof}
For each $g\in B(2r,38r)$ we denote by  $M_{r}(g)$ be the set of patterns in $L_{B(6r,37r)}(X)$ that are $g$-overlapping. 

We now prove an upper bound for the cardinality of $M_r(g)$, for fixed $g\in B(2r,38r)$. This is done by examining the intersection $B(6r,37r)\cap gB(6r,37r)$. By \Cref{lem:most-very-ugliest-lemma}, there is an element $h\in G$ such that $hB(r)\subset B(6r,37r)\cap gB(6r,37r)$. 

Let us observe that for every  $f\in hB(r)$, the element $g^{-1}f$ is in $B(6r,37r)$, but not in $hB(r)$. Indeed, $g^{-1}f\in B(6r,37r)$ follows from the fact that  $hB(r)\subset gB(6r,37r)$, and $g^{-1}f\not\in hB(r)$ holds because $|g|>2r$. 

Joining the observation in the previous paragraph and \Cref{patterns-with-verlappings}, we can conclude the following. If a pattern $p$ with support $B(6r,37r)$ is $g$-overlapping, then the values of $p$ on $hB(r)$ are determined by the values of $p$ outside $hB(r)$, that is, on $B(6r,37r)\smallsetminus hB(r)$. That is, every pattern in $M_{r}(g)$ is determined by its values on $B(6r,37r)\smallsetminus hB(r)$. We have proved the bound  
\[
|M_{r}(g)|\leq|L_{B(6r,37r)\smallsetminus hB(r)}(X)|.
\]
Now we use the fact that $K$ is a strong irreducibility constant for $X$, and $K\subset B(k)$. Since $hB(r-k)K\cap (B(6r,37r)\smallsetminus hB(r))=\varnothing$, we have 
\[
 |L_{B(6r,37r)\smallsetminus hB(r)}(X)|\cdot|L_{hB(r-k)}(X)| \leq |L_{B(6r,37r)}(X)|.
\]
Combining the last two inequalities and using that $|L_{hB(r-k)}(X)|=|L_{B(r-k)}(X)|$ we obtain
\begin{equation}\label{eq:upper-bound-for-_r(g)}
|M_{r}(g)|\leq\frac{|L_{B(6r,37r)}(X)|}{|L_{hB(r-k)}(X)|} = \frac{|L_{B(6r,37r)}(X)|}{|L_{B(r-k)}(X)|}.
\end{equation}
We now define $M_r$ as the set of all patterns in $L_{B(6r,37r)}(X)$ that are $g$-overlapping for some $g\in B(2r,38r)$. That is, $M_r$ equals the union of $M_r(g)$, with $g$ ranging over $B(2r,38r)$. \Cref{eq:upper-bound-for-_r(g)} shows that 
\begin{align*}
\frac{|M_{r}|}{|L_{B(6r,37r)}(X)|}  \leq & \sum_{g\in B(2r,38r)}\frac{|M_{r}(g)|}{|L_{B(6r,37r)}(X)|}\\ \leq & \sum_{g\in B(2r,38r)}\frac{1}{|L_{B(6r,37r)}(X)|} \frac{|L_{B(6r,37r)}(X)|}{|L_{B(r-k)}(X)|} \\
  \leq &\frac{|B(38r)|}{|L_{B(r-k)}(X)|}.
\end{align*}
But one hypothesis in \Cref{prop:non-overlapping-multiuso} is that $|B(38r)|<|L_{B(r-k)}(X)|$. Thus we have proved 
\[
\frac{|M_{r}|}{|L_{B(6r,37r)}(X)|} <1.
\]

It follows that the proportion of elements in $L_{B(6r,37r)}(X)$ that do not satisfy the desired condition (being non-$g$-overlapping for all $g\in B(2r,38r)$) is strictly smaller than 1. This proves our claim.
\end{proof}

\begin{lemma}[Mid range]\label{lem:mid-range-markers}
Let $G$, $X$, $K$, $k$ and $r$ be as in \Cref{prop:non-overlapping-multiuso}. Then there is a pattern $p\in L_{B(r,5r)}(X)$ which is non-$g$-overlapping for all $g\in B(2r)\smallsetminus K^3$.\end{lemma}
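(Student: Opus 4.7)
The proof follows the blueprint of \Cref{lem:long-range-markers}: we set up a counting argument over the language of $F = B(r,5r)$, showing that only a strict minority of patterns in $L_F(X)$ are $g$-overlapping for some $g\in B(2r)\smallsetminus K^3$, and then extract a pattern avoiding all such overlaps.

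Concretely, for each $g\in B(2r)\smallsetminus K^3$ let $M_r(g)\subset L_F(X)$ denote the set of $g$-overlapping patterns. The core step is to produce, for each such $g$, an element $h\in G$ and an integer $\rho\geq k+1$ (with $\rho$ chosen at least $r$, uniformly in $g$) satisfying
\[
hB(\rho)\subset F\cap gF \quad\text{and}\quad hB(\rho)\cap ghB(\rho)=\varnothing.
\]
Given such a pair $(h,\rho)$, if $p\in M_r(g)$ then the identity $p(h')=p(g^{-1}h')$ holds for every $h'\in hB(\rho)$, and by construction $g^{-1}h'\in F\smallsetminus hB(\rho)$. Hence $p$ is determined on $hB(\rho)$ by its values outside, and the strong-irreducibility estimate used in the proof of \Cref{lem:long-range-markers} (applied to the ball $hB(\rho-k)$) yields
\[
|M_r(g)|\leq \frac{|L_F(X)|}{|L_{B(\rho-k)}(X)|}\leq \frac{|L_F(X)|}{|L_{B(r-k)}(X)|}.
\]
Summing over $g\in B(2r)\smallsetminus K^3$ and invoking $|B(2r)|\leq|B(38r)|$ together with hypothesis (1) of \Cref{prop:non-overlapping-multiuso} gives
\[
\frac{\bigl|\bigcup_g M_r(g)\bigr|}{|L_F(X)|}\leq \frac{|B(38r)|}{|L_{B(r-k)}(X)|}<1,
\]
so a suitable pattern exists in $L_F(X)\smallsetminus\bigcup_g M_r(g)$.

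The main obstacle is the construction of the pair $(h,\rho)$ when $g\in B(2r)\smallsetminus K^3$ has small word length. If $|g|>2\rho$, disjointness of $hB(\rho)$ and $ghB(\rho)$ is automatic as soon as $hB(\rho)\subset F\cap gF$, and the existence of $h$ is handled by a variant of \Cref{lem:most-very-ugliest-lemma}. If $|g|\leq 2\rho$ (which may occur since $K^3$ need not be a ball), disjointness is equivalent to $h^{-1}gh\notin B(2\rho)$ and cannot be achieved merely by translation. Here the hypothesis $g\notin K^3$ together with the quantitative bound $r>16k|K^3|+2k$ is expected to play the decisive role: the annulus $F$ is thick enough that a pigeonhole argument over candidate positions $h$ inside $F$ avoids the $|K^3|$-controlled set of $h$ for which $h^{-1}gh$ lies in $B(2\rho)$. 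I expect this pigeonhole step to be the main technical difficulty, and to be precisely where the constants $16$ and $|K^3|$ in hypothesis (3) are consumed.
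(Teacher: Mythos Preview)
Your counting approach has a genuine gap in the small-$|g|$ regime that cannot be closed by the pigeonhole step you sketch. Take $G$ abelian, say $G=\ZZ^d$. Then $h^{-1}gh=g$ for every $h$, so the disjointness condition $h^{-1}gh\notin B(2\rho)$ becomes simply $|g|>2\rho$. You need $\rho\geq r$ for the entropy bound $|L_{B(\rho-k)}(X)|\geq|L_{B(r-k)}(X)|$ to feed into hypothesis~(1), while $g\in B(2r)\smallsetminus K^3$ may satisfy $|g|\leq 2r$ (indeed $|g|$ can be as small as $3k+1$). Hence no choice of $h$ separates $hB(\rho)$ from $g^{-1}hB(\rho)$: the set of ``bad'' $h$ is all of $G$, not a set of size controlled by $|K^3|$. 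The same obstruction arises whenever $g$ is central, so the approach fails on the very groups ($\ZZ^d$) that motivate the result. Incidentally, the constants in hypothesis~(3) are not consumed here; they are used in the short-range lemma and in the final gluing of \Cref{prop:non-overlapping-multiuso}.

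The paper's proof is not a counting argument but an explicit construction. One fixes a reference pattern $q\in L_{K^2}(X)$, a maximal $K$-packing $\mathcal H\subset B(2r)\smallsetminus K^3$, and two distinct symbols $a_1,a_2$; then one builds $p_1,p_2\in L_{B(2r)}(X)$ agreeing with $q$ on $K^2$ and taking the constant value $a_i$ on $\mathcal H$. Maximality of $\mathcal H$ gives, for each $g\in B(2r)\smallsetminus K^3$, some $g'\in K^2$ with $gg'\in\mathcal H$; comparing $p_i(gg')=a_i$ with $gp_i(gg')=q(g')$ shows that at least one $p_i$ is non-$g$-overlapping. Finally one places translates $g_1p_1$ and $g_2p_2$ at two points $g_1,g_2$ with $|g_1|=|g_2|=3r$ and $d(g_1,g_2)=6r$, and glues by strong irreducibility inside $B(r,5r)$. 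The mechanism that kills mid-range overlaps is thus a forced collision between a fixed pattern near the origin and a prescribed lattice of symbols, not an entropy deficit.
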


\begin{proof}
Note that non-triviality of $X$ and the inequality $r>16k|K^3|+2k$ implies that $K\subset K^3 \subset B(r)$. We start by constructing two patterns $p_{1}$ and $p_{2}$ in $L(X)$ with
support $B(2r)$, and with the property that for every $g\in B(2r)\smallsetminus K^3$ at least one of $p_1$, $p_2$ is non-$g$-overlapping.

In order to construct $p_1$ and $p_2$, we fix a pattern $q\in L(X)$ with support $K^2$, and let $\mathcal{H}=\{h_{1},\dots,h_{n}\}$ be a subset of $B(2r)\smallsetminus K^3$ with the property that $\{h_{i}K:i=1,\dots,n\}$ is a collection of disjoint
subsets of $G$, and $\mathcal{H}$ is maximal for inclusion. Since $K^3\subset B(r)$
and $\diam(G)>2r$, we have that $B(2r) \smallsetminus K^3\ne\varnothing$, and thus $\mathcal H$ is nonempty. Since $X$
is strongly irreducible and $|X|>1$, there are at least two different
symbols $a_{1},a_{2}$ in $L_{\{\Id_G\}}(X)$. The fact that $K$ is a strong irreducibility constant for $X$ shows that for each $i\in \{1,2\}$ there is a pattern $p_{i}\in L(X)$ with support $B(2r)$, with $p_i|_{K^2}=q$,
and with $p_i(h)=a_{i}$ for all $h\in\mathcal{H}$. 

Let us observe that for every $g\in B(2r)\smallsetminus K^3$, there is $g'\in K^2$ such
that $gg'\in\mathcal{H}$. Indeed, since $\mathcal{H}$ was chosen maximal
for inclusion, for every $g\in B(2r)\smallsetminus K^3$ there must exist $h\in\mathcal{H}$
with $gK\cap hK\ne\varnothing$. Since $K=K^{-1}$ by hypothesis, this implies that $h=gg'$ for some $g'\in K^2$.  

We now verify that the patterns $p_{1}$ and $p_{2}$ satisfy
the desired condition. Let $g\in B(2r)\smallsetminus K^3$ and let $g'$ as in the previous paragraph. Since $a_1\ne a_2$, at least one of them must be different from $q(g')$. Let $i \in \{1,2\}$ such that $a_i\ne q(g')$. Then $p_{i}(gg')=a_{i}$, while $gp_{i}(gg')=p_{i}(g')=q(g')\ne a_{i}$.
It follows that $p_{i}$ and $gp_{i}$ have different values at $gg'$ and consequently $p_i$ is non-$g$-overlapping. That is, $p_1$ and $p_2$ have the desired property. 

We will now combine $p_1$ and $p_2$ into a single pattern. Observe that, since $\diam(G)>6r$, there are two group elements $g_1$ and $g_2$ such that $|g_1|=|g_2|=3r$ and $d(g_1,g_2)=6r$. 


The hypotheses in \Cref{prop:non-overlapping-multiuso} ensure that $r\geq k+1$ and $K\subset B(k)$. It follows that $g_1B(2r)K\cap g_2B(2r)=\varnothing$. Thus by strong irreducibility there is a pattern $p\in L_{B(r,5r)}(X)$
with $g_{1}p_{1}\sqsubset p$ and $g_{2}p_{2}\sqsubset p$. \Cref{monotony-of-overlappings} implies that $p$ is non-$g$-overlapping for every $g\in (2r)\smallsetminus K^3$.
\end{proof}

\begin{lemma}[Short range]
\label{lem:short-range-markers}
Let $G$, $X$, $K$, $k$ and $r$ be as in \Cref{prop:non-overlapping-multiuso}. Then there is a pattern $p\in L(X)$ with support
$B(8k|K^3|)$ and which is non-$g$-overlapping for all $g\in K^3\smallsetminus\fix(X)$. 
\end{lemma}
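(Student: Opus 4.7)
The plan is to construct the pattern $p$ by assembling local witnesses for each $g \in K^3 \smallsetminus \fix(X)$ and combining them via strong irreducibility. For each such $g$, the hypothesis $g \notin \fix(X)$ yields some $x_g \in X$ and $h_g \in G$ with $x_g(h_g) \neq x_g(g^{-1}h_g)$; restricting $x_g$ to $\{h_g, g^{-1}h_g\}$ gives a local witness pattern $q_g \in L(X)$ with distinct values at these two positions. This is the basic ingredient: a two-site witness certifying that $g$ acts nontrivially at that pair.

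Next I would enumerate $K^3 \smallsetminus \fix(X) = \{g_1, \dots, g_N\}$ with $N \leq |K^3|$ and allocate a ``slot'' for each $g_i$ inside $B(8k|K^3|)$. Specifically, I would choose elements $t_1,\dots,t_N \in B(8k|K^3|)$ with pairwise distance greater than $5k$ and such that $g_i^{-1}t_i \in B(8k|K^3|)$ for every $i$. This is feasible because $K^3 \subset B(3k)$ (so each pair has diameter at most $3k$) and the radius $8k|K^3|$ provides roughly $8k$ of room per element of $K^3$, enough for $N$ separated slots. By construction, the family of pairs $\{\{t_i, g_i^{-1}t_i\}\}_{i=1}^N$ is $K$-disjoint.

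At each slot $\{t_i, g_i^{-1}t_i\}$, I would ensure there exists a pattern in $L_{\{t_i, g_i^{-1}t_i\}}(X)$ with distinct values. If $g_i \notin K$, then $\{\Id_G, g_i^{-1}\}$ is already $K$-disjoint (since $K$ is symmetric and $g_i^{-1} \notin K$), so strong irreducibility together with $|L_{\{\Id_G\}}(X)| \geq 2$ yields such a distinguishing pattern, and shift-invariance transfers it to $\{t_i, g_i^{-1}t_i\}$. If instead $g_i \in K$, the target pair need not be $K$-disjoint, and I would instead use the local witness $q_{g_i}$ from the first paragraph, choosing $t_i$ in a slot so that $\{t_i, g_i^{-1}t_i\}$ coincides with a translate of $\{h_{g_i}, g_i^{-1}h_{g_i}\}$ by an element commuting with $g_i$, which preserves the non-$g_i$-overlap structure of the witness. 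Once every slot carries a distinguishing pattern, \Cref{lem:K-irreducible-mainlemma} applied to the $K$-disjoint family of pairs combines them into a configuration $x \in X$ realizing all prescribed values, and $p := x|_{B(8k|K^3|)}$ is then non-$g_i$-overlapping for each $i$.

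The hard part is the case $g_i \in K \cap (K^3 \smallsetminus \fix(X))$, where the conjugacy class of $g_i^{-1}$ may be entirely contained in $K$, so no translate of the pair $\{\Id_G, g_i^{-1}\}$ is $K$-disjoint and strong irreducibility cannot be invoked directly to fabricate a distinguishing pattern. The resolution must leverage the global witness $q_{g_i}$, either by translating it through the centralizer of $g_i$ into a small slot of $B(8k|K^3|)$ or by an alternative argument exploiting that $g_i \notin \fix(X)$ forces the existence of a distinguishing pattern at some pair of positions inside the ball. Quantifying this is where the specific radius $8k|K^3|$ becomes crucial, as it must accommodate the worst case over all $g_i$ in $K$.
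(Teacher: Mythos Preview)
Your overall strategy matches the paper's: enumerate $K^3\smallsetminus\fix(X)=\{g_0,\dots,g_{n-1}\}$, produce for each $g_i$ a witness pattern that is non-$g_i$-overlapping, place these witnesses in $K$-disjoint slots inside $B(8k|K^3|)$, and assemble them via strong irreducibility, concluding by monotonicity (\Cref{monotony-of-overlappings}).

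Where you diverge --- and where your gap lies --- is in how the witnesses are built. You use two-point supports $\{t_i,g_i^{-1}t_i\}$ and attempt to prescribe distinct values there by strong irreducibility; when $g_i\in K$ the pair is not $K$-disjoint and this fails. Your fallback (translate the restricted witness $q_{g_i}$ by an element of the centralizer of $g_i$) is left incomplete. You correctly note that left translation by $t$ preserves non-$g_i$-overlapping only when $t$ centralizes $g_i$, but you give no reason why the centralizer should be large enough to reach the designated slot, and in general it need not be. This is a genuine gap, which you yourself flag as ``the hard part''.

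The paper avoids the case split entirely. Its opening observation is that for each $g\in K^3\smallsetminus\fix(X)$ there is $x\in X$ with $x\neq gx$, and after shifting one may assume $x(g)\neq x(\Id_G)$; then $p_i:=x|_{K^3}\in L_{K^3}(X)$ is already non-$g_i$-overlapping (witnessed at $g_i\in K^3\cap g_iK^3$), with no appeal to strong irreducibility inside the witness. The slots are $h_iK^3$ with $|h_i|=8ki$; since $K^7\subset B(7k)$ these are pairwise $K$-disjoint, and one application of \Cref{lem:K-irreducible-mainlemma} produces $p\in L_{B(8k|K^3|)}(X)$ with $h_ip_i\sqsubset p$ for every $i$. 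Monotonicity then finishes.

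The idea you are missing: take each witness as the restriction to all of $K^3$ of a configuration that already distinguishes $\Id_G$ from $g_i$, rather than trying to synthesize a distinguishing two-point pattern by strong irreducibility. This makes the $g_i\in K$ versus $g_i\notin K$ distinction unnecessary.
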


\begin{proof}
We start with a general observation. Let $F\Subset G$ with $\Id_G\in F$. Observe that for every $g\in F\smallsetminus\fix(X)$, there is a pattern $p\in A^{F}$ which is non-$g$-overlapping. Indeed, if $g\not\in\fix(X)$ then there is $x\in X$ with $x\ne gx$. By
shifting $x$ we can assume $x(g)\ne x(\Id_G)$. We claim that $p=x|_{F}$ is non-$g$-overlapping. It suffices to note that $p$ and $gp$ have different values at $g\in F\cap gF$, since $p(g)=x(g)$ and $gp(g)=x(\Id_G)$. 

We will apply the observation in the previous paragraph to the set $K^3$, which contains $\Id_G$ as $X$ is non-trivial.  Let $\{g_{0},\dots,g_{n-1}\}=K^3\smallsetminus\fix(X)$. By the previous observation, for each $i\in \{0,\dots,n-1\}$ there is a
pattern $p_{i}\in L(X)$ with support $K^3$ and such that $p_i$ is $g_i$-non-overlapping. We can also choose a group element $h_{i}$ with $|h_{i}|=8ki$. Observe that in this manner $h_iK^3 \subset B(8k|K|^3)$ for each $i\in\{0,\dots,n-1\}$. 

Finally, as $k\geq 1$ and $K^7\subset B(7k)$, it follows that for every pair of distinct $i,j$ in $\{0,\dots,n-1\}$ we have $h_iK^3K\cap h_jK^3=\varnothing$ and thus the family $(h_iK^3)_{i=1}^{n-1}$ is $K$-disjoint. Since $K$ is a strong irreducibility constant for $X$, it follows that there is $p\in L(X)$ with support $B(8k|K^3|)$ such that $p|_{h_iK^3}=h_{i}p_{i}$ for $i\in \{0,\dots,n-1\}$. \Cref{monotony-of-overlappings} shows that $p$ is non-$g$-overlapping for all $g\in K^3\smallsetminus\fix(X)$.
\end{proof}

We are now ready to prove \Cref{prop:non-overlapping-multiuso}
\begin{proof}[Proof of \Cref{prop:non-overlapping-multiuso}]
Let $G$, $X$, $K$, $k$ and $r$ be as in \Cref{prop:non-overlapping-multiuso}.
\begin{enumerate}
    \item By \Cref{lem:long-range-markers} there is a pattern $p_l\in L(X)$ with support $B(6r,37r)$ that is non-$g$-overlapping for all $g\in B(2r,38r)$.
    \item By \Cref{lem:mid-range-markers}  there is a pattern $p_m\in L(X)$ with support $B(r,5r)$ that is non-$g$-overlapping for $g\in B(2r)\smallsetminus K^3$.
    \item By \Cref{lem:short-range-markers} there is a pattern $p_s\in L(X)$ with support $B(8k|K^3|)$ that is non-$g$-overlapping for all $g\in K^3\smallsetminus\fix(X)$.
\end{enumerate}

The last hypothesis on $r$ from \Cref{prop:non-overlapping-multiuso} states that $r>16k|K^3|+2k$. In particular, as $\diam(G)>36r$, there exists $h\in G$ with $|h|=5r+k+8k|K^3| < 6r$. Furthermore, it follows from $r-2k>2\cdot 8k|K^3|$ that the support of $hp_{s}$ is contained in $B(5r+k, 6r-k)$. Since $K$ is a strong irreducibility constant for $X$ and $K\subset B(k)$, it follows that there is a pattern in $L(X)$ with support
$B(r,37r)$ and having $p_{m}$, $hp_{s}$, and $p_{l}$ as sub-patterns. Since $\fix(X)\subset B(k)$ (\Cref{lem:if-X-SI-then-Fix-contained-in-K}), the pattern $p$ is non-$g$-overlapping for every $g\in B(38r)\smallsetminus \fix(X)$. That is, $p$ is a $(B(r),B(37r))$-marker. 
\end{proof}
\subsection{Proof of \Cref{thm:markers}}\label{subsec:proof-of-markers}

Having proved \Cref{prop:non-overlapping-multiuso} we are now ready to prove \Cref{thm:markers}. We start by proving the claim regarding finitely generated infinite groups.

\begin{proposition}
\label{lem:most-of-patterns-are-pseudo-markers}\label{thm:markers-for-G-fg}Let $G$ be a finitely
generated infinite group endowed with a word metric, and let $X$ 
be a nonempty strongly irreducible subshift. Then $X$ admits $(B(r),B(37r))$-markers for all large enough $r$. 
\end{proposition}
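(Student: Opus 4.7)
The plan is to verify the three numerical hypotheses of \Cref{prop:non-overlapping-multiuso} for all sufficiently large $r$ and then invoke it. Choose a symmetric strong irreducibility constant $K$ for $X$ and an integer $k\geq 1$ with $K\subset B(k)$. The trivial case $|X|=1$ is handled separately: here $\fix(X)=G$, so $WY^{-1}\smallsetminus\fix(X)=\varnothing$ for every $Y\subsetneq W$, and thus any pattern in $L_{B(37r)\smallsetminus B(r)}(X)$ is vacuously a $(B(r),B(37r))$-marker (note that $B(r)\subsetneq B(37r)$ for $r\geq 1$ because $G$ is infinite). From now on I assume $X$ is non-trivial.

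Conditions (2) and (3) of \Cref{prop:non-overlapping-multiuso} pose no real difficulty: (2), that $\diam(G)>38r$, is automatic since $G$ is infinite, and (3), that $r>16k|K^3|+2k$, is a fixed lower bound on $r$. The substantive point is condition (1), namely $|B(38r)|<|L_{B(r-k)}(X)|$. From \Cref{prop:lenguaje-de-un-subshit-SI-tiene-muchos-elementos} one gets
\[
|L_{B(r-k)}(X)| \,\geq\, 2^{|B(r-k)|/(2|K|)},
\]
so it suffices to show $|B(r-k)| > 2|K|\log_2|B(38r)|$ eventually. I would use the submultiplicative inequality $|B(r+s)|\leq|B(r)|\cdot|B(s)|$ (immediate from $B(r+s)\subset B(r)B(s)$), iterated to give $|B(38r)|\leq|B(r)|^{38}$ and hence $\log_2|B(38r)|\leq 38\log_2|B(r)|$; and likewise $|B(r-k)|\geq|B(r)|/|B(k)|$. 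Combining these yields
\[
\frac{|B(r-k)|}{\log_2|B(38r)|} \,\geq\, \frac{|B(r)|}{38\,|B(k)|\,\log_2|B(r)|},
\]
which tends to infinity as $r\to\infty$ since $|B(r)|\to\infty$ in the infinite group $G$ while $|B(k)|$ is a fixed constant. In particular, this ratio eventually exceeds $2|K|$, which is exactly condition (1).

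The main obstacle is condition (1); the others are formalities. It is delicate because one cannot directly compare $|B(38r)|$ and $|B(r-k)|$---for groups of exponential growth both are exponential, and for groups of polynomial or intermediate growth the comparison looks different again. What saves the argument is that the lower bound coming from strong irreducibility is \emph{exponential} in $|B(r-k)|$, so after applying submultiplicativity one only needs $|B(r-k)|$ to dominate a constant multiple of $\log|B(r)|$, which is automatic in any infinite group where $|B(r)|\to\infty$.
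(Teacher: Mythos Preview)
Your proof is correct and follows essentially the same route as the paper's: both handle the trivial case via $\fix(X)=G$, then verify the three hypotheses of \Cref{prop:non-overlapping-multiuso}, with the only substantive one (condition (1)) established by combining the submultiplicative bound $|B(38r)|\leq |B(r)|^{38}$, the relation $|B(r)|\leq |B(k)|\cdot|B(r-k)|$, and the exponential lower bound on $|L_{B(r-k)}(X)|$ from \Cref{prop:lenguaje-de-un-subshit-SI-tiene-muchos-elementos}. The paper phrases the conclusion as $|B(38r)|/|L_{B(r-k)}(X)|\to 0$ rather than rearranging into $|B(r-k)|$ versus $2|K|\log_2|B(38r)|$, but the underlying estimate is identical.
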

\begin{proof}
If $X$ is trivial then $\fix(X)=G$ and the statement is trivially satisfied. Suppose $X$ is non-trivial. Let $K$ and $k$ be as in \Cref{prop:non-overlapping-multiuso}. It suffices to prove that every large enough $r$ satisfies conditions (1), (2) and (3) in \Cref{prop:non-overlapping-multiuso}. As $G$ is infinite, conditions (2) and (3) are obviously satisfied. In order to show that the first condition is satisfied for large enough $r$, we prove that:
\begin{equation}\label{eq:cota-para-bolas-y-lenguajes-G-infinito-fg}
\lim_{r\to\infty}\frac{|B(38r)|}{|L_{B(r-k)}(X)|}=0.
\end{equation}

On the one hand, we have that for every $r\geq k$, \[|B(38r)|\leq  |B(r)|^{38} \leq |B(k)|^{38}\cdot |B(r-k)|^{38}.\]

On the other hand, non-triviality of $X$ and~\Cref{prop:lenguaje-de-un-subshit-SI-tiene-muchos-elementos} imply that
\begin{equation*}\label{eq:SI-implica-mucho-lenguaje}
{|L_{B(r-k)}(X)|}\geq 2^{\frac{|B(r-k)|}{2|K|}}
\end{equation*}
Thus we have
\begin{align*}
    \frac{|B(38r)|}{|L_{B(r-k)}(X)|}  \leq  |B(k)|^{38}\cdot |B(r-k)|^{38} \cdot 2^{-\frac{|B(r-k)|}{2|K|}}
\end{align*}
Since $G$ is infinite, $|B(r-k)|$ is not bounded and so the right hand side goes to $0$. We conclude that~\Cref{eq:cota-para-bolas-y-lenguajes-G-infinito-fg} holds for large enough $r$. \end{proof}

Next we prove~\Cref{thm:markers} for locally finite groups. We will need the following basic observation.

\begin{observation}\label{markers-can-be-recovered-from-restricted-subshift}
    Let $G$ be a group, let $X$ be a strongly irreducible $G$-subshift, and let $H\leqslant G$ which contains a strong irreducibility constant for $X$. Let $Y\subset W\Subset H$. Then a pattern is a $(Y,W)$-marker for $X$ if and only if it is a $(Y,W)$-marker for $X|_H = \{x|_H : x \in X\}$.
\end{observation}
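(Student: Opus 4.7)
The plan is to unpack the definition of $(Y,W)$-marker and verify clause-by-clause that each ingredient (the pattern membership in the language, the set of group elements over which non-overlapping is tested, and the kernel of the shift action) is the same whether computed with respect to $X$ or with respect to $X|_H$. The only external input needed is \Cref{prop:basic-things-SI}.

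First I would show that $L_{W\smallsetminus Y}(X) = L_{W\smallsetminus Y}(X|_H)$, using crucially that $W\smallsetminus Y \subset H$. For the inclusion $\supset$: if $p = (h^{-1}y)|_{W\smallsetminus Y}$ for some $y = x|_H \in X|_H$ with $h \in H \leqslant G$, then $p = (h^{-1}x)|_{W\smallsetminus Y}$, so $p \in L_{W\smallsetminus Y}(X)$. Conversely, if $p = (g^{-1}x)|_{W\smallsetminus Y}$ for some $x \in X$ and $g \in G$, set $x' = g^{-1}x \in X$; then $y = x'|_H$ lies in $X|_H$ and $y|_{W\smallsetminus Y} = p$, so $p \in L_{W\smallsetminus Y}(X|_H)$.

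Second, since $W, Y \subset H$ and $H$ is a subgroup, the set $WY^{-1}$ over which the non-overlapping condition is tested is contained in $H$, so it is the same whether one reads the definition of marker inside $G$ or inside $H$. Moreover, the property that a pattern $p \in A^{W \smallsetminus Y}$ is non-$g$-overlapping is a purely combinatorial condition on the pair $(p,g)$ (namely, that $p$ and $gp$ disagree at some point of $(W\smallsetminus Y) \cap g(W\smallsetminus Y)$), and does not reference the ambient subshift.

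Third, \Cref{prop:basic-things-SI}(2) gives $\fix(X|_H) = \fix(X)$, so $WY^{-1} \smallsetminus \fix(X) = WY^{-1} \smallsetminus \fix(X|_H)$. Combining these three observations, the defining conditions of a $(Y,W)$-marker for $X$ and of a $(Y,W)$-marker for $X|_H$ are literally identical, and the observation follows. There is no real obstacle here; the content of the statement is pure bookkeeping, which is presumably why the authors present it as an observation.
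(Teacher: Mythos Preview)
Your proposal is correct and follows the same approach as the paper: the authors simply remark that the observation ``follows from the definition of marker and the fact $\fix(X)=\fix(X|_H)$ (see \Cref{prop:basic-things-SI})'', which is exactly what you have carefully unpacked. Your version is more detailed but the content is identical.
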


    \Cref{markers-can-be-recovered-from-restricted-subshift} follows from the definition of marker and the fact $\fix(X)=\fix(X|_H)$ (see \Cref{prop:basic-things-SI}).

\begin{proposition}\label{thm:markers-but-for-locally-finite-group}
Let $G$ be an infinite and locally finite group, and let $X\subset A^{G}$
be a nonempty strongly irreducible subshift. Then $X$ is markable.
\end{proposition}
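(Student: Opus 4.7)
The plan is to apply \Cref{prop:non-overlapping-multiuso} to the restriction $X|_H$ for a sufficiently large finite subgroup $H$ of $G$, and then transfer the resulting marker to $X$ using \Cref{markers-can-be-recovered-from-restricted-subshift}. If $X$ is trivial then $\fix(X)=G$ and the claim is vacuous, so I will assume $X$ is non-trivial and fix a symmetric strong irreducibility constant $K$ for $X$. Fix also $Y\Subset G$; the goal is to exhibit a $(Y,W)$-marker for some $W\Subset G$.

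Using that $G$ is infinite and locally finite, I would inductively build a strictly increasing chain of finite subgroups $H_0\subsetneq H_1\subsetneq\cdots$ of $G$ by setting $H_0=\langle Y\cup K\rangle$ and, for $n\geq 1$, $H_n=\langle H_{n-1},g_n\rangle$ for some $g_n\in G\smallsetminus H_{n-1}$. Each $H_n$ is finite by local finiteness, and Lagrange's theorem forces $|H_n|\geq 2|H_{n-1}|$, so $|H_n|\geq 2^n|H_0|$ grows exponentially in $n$. I would equip $H_n$ with the symmetric generating set $S_n=(Y\cup K)\cup\{g_i^{\pm 1}:1\leq i\leq n\}$. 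The key feature of this choice is that $K\subset S_n\subset B_{S_n}(1)$, so in the notation of \Cref{prop:non-overlapping-multiuso} we may take $k=1$.

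I would next apply \Cref{prop:non-overlapping-multiuso} to the $H_n$-subshift $X|_{H_n}$, which is non-trivial and strongly irreducible with constant $K$ by \Cref{prop:basic-things-SI}. Fix $r_0=16|K^3|+3$ so that condition~(3) of \Cref{prop:non-overlapping-multiuso} holds. For conditions~(1) and~(2), observe that $|S_n|\leq |Y\cup K|+2n$ grows linearly in $n$, so $|B_{S_n}(r)|\leq |S_n|^{r}$ is polynomial in $n$, whereas $|H_n|$ is exponential in $n$; this forces $\diam_{S_n}(H_n)>38r_0$ for all sufficiently large $n$, establishing condition~(2). Condition~(1) follows from \Cref{prop:lenguaje-de-un-subshit-SI-tiene-muchos-elementos}, which yields $|L_{B_{S_n}(r_0-1)}(X|_{H_n})|\geq 2^{|B_{S_n}(r_0-1)|/(2|K|)}$: since $|B_{S_n}(r_0-1)|\geq 1+|S_n|$ grows linearly in $n$, this lower bound grows exponentially and eventually dominates the polynomial bound $|B_{S_n}(38r_0)|\leq |S_n|^{38r_0}$.

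For such a large $n$, \Cref{prop:non-overlapping-multiuso} furnishes a $(B_{S_n}(r_0),B_{S_n}(37r_0))$-marker $p$ for $X|_{H_n}$. Since $Y\subset B_{S_n}(1)\subset B_{S_n}(r_0)$, \Cref{obs:markers-can-be-completed} allows extending $p$ to a $(Y,B_{S_n}(37r_0))$-marker for $X|_{H_n}$, and \Cref{markers-can-be-recovered-from-restricted-subshift} then transfers this to a $(Y,B_{S_n}(37r_0))$-marker for $X$. The main subtlety is applying \Cref{prop:non-overlapping-multiuso} in the absence of a natural word metric on $G$; this is handled by working inside ever-larger finite subgroups and engineering the generating sets so that $K\subset B_{S_n}(1)$, which reduces the required ball-size inequalities to the exponential-versus-polynomial comparison above.
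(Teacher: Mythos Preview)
Your proof is correct and follows the same overall strategy as the paper: build a strictly increasing chain of finite subgroups containing $Y\cup K$, apply \Cref{prop:non-overlapping-multiuso} to $X|_{H_n}$ for $n$ large, and transfer the marker back to $X$ via \Cref{obs:markers-can-be-completed} and \Cref{markers-can-be-recovered-from-restricted-subshift}.

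Your execution, however, is noticeably more streamlined than the paper's. You keep the radius $r=r_0$ \emph{fixed} (just large enough for condition~(3)) and let only $n$ grow; conditions~(1) and~(2) then both reduce to the single elementary observation that $|S_n|$ grows linearly in $n$ while $|H_n|\geq 2^n|H_0|$ grows exponentially, so $|B_{S_n}(38r_0)|\leq |S_n|^{38r_0}$ is polynomial in $n$ whereas both $|H_n|$ and $2^{|B_{S_n}(r_0-1)|/(2|K|)}\geq 2^{n/(2|K|)}$ are exponential. The paper instead lets $r=n$ grow, which forces it to pass to the subgroups $H_n=G_{(38n)^2}$ and to establish two auxiliary claims (that $|B^{G_n}(r)|\geq 2^r|Y\cup K|$ for $r\leq n$, and that $\diam(G_{n^2})>n$ eventually) before the final comparison can be made. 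Your fixed-$r$ approach bypasses this extra layer; the paper's choice appears to be driven by parallelism with the finitely-generated case rather than by necessity. One trivial remark: $S_n$ as you wrote it is symmetric only if $Y$ is, but symmetrizing $Y$ (or $S_n$) at the outset costs nothing and does not affect any of your estimates.
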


\begin{proof}
If $X$ is trivial, so is the conclusion. Let $X$ be a nontrivial strongly irreducible subshift, and let $K$ be a strong irreducibility constant for $X$ such that $K$ is symmetric (and $\Id_G\in K$). \Cref{markers-can-be-recovered-from-restricted-subshift} shows that in order to prove that $X$ is markable, it is sufficient to prove that for every $Y\Subset G$ the following holds. There is a finite group $H\leqslant G$ which contains both $Y$ and $K$, there is a set $W\subset H$ such that $Y\subset W$, and there is a $(Y,W)$-marker for the subshift $X|_H$. We will prove that this is the case by defining a sequence $(H_n)_{n\in\N}$ of finite subgroups of $G$, and applying \Cref{prop:non-overlapping-multiuso} to $H_n$ for some $n$ large enough.

Before starting with the construction let us make a notational remark. Each time that we pick a finite group $H\leqslant G$, we also endow $H$ with a finite and symmetric generating set, which then determines a metric. We always denote by $B^H(r)$ the ball in $H$ of radius $r$ in the corresponding metric. 

We now proceed with the argument. Fix a nonempty set $Y\Subset G$. We define a sequence $(S_n)_{n\geq 0}$ of finite subsets of $G$ recursively. We set $S_0=Y\cup K$. Assuming that $S_n$ has been defined, we define $S_{n+1}=S_n\cup \{g_{n+1}\}$ where $g_{n+1}$ is any element of $G$ that is not in the subgroup of $G$ generated by $S_n$. For each $n$ we let $G_n$ be the subgroup of $G$ generated by $S_n$, and we endow $G_n$ with the metric determined by the symmetric generating set $S_n\cup S_n^{-1}$. 

Let us note that every $G_n$ is finite because $G$ is locally finite. In order to apply \Cref{prop:non-overlapping-multiuso} we will need some computations. The first ingredient for the argument is that the generating set chosen grows linearly with $n$, while the cardinality of $G_n$ grows exponentially with $n$. More precisely

\begin{claim}
\label{claim:balls-in-G_n-grow-exponentially}Let $n\geq 1$ and $r\leq n$.
Then we have $|B^{G_{n}}(r)|\geq2^{r}|Y\cup K|$.
\end{claim}


Indeed, note that for every $n\geq0$ and $r\geq 0$, since $g_{n+1}\notin G_n$, we have that $B^{G_{n}}(r)\cap g_{n+1}B^{G_{n}} (r)=\varnothing$. Next, observe that $|B^{G_{n+1}}(r+1)|\geq2|B^{G_{n}}(r)|$. This follows from the fact that $B^{G_{n+1}}(r+1)$ contains both
$B^{G_{n}}(r)$ and $g_{n+1}B^{G_{n}}(r)$, and these sets are disjoint
by the previous observation. Now let $n \geq 1$ and $r\leq n$. Iterating the previous relation $r-1$ times we get
\[ |B^{G_{n}}(r)| \geq 2^{r-1}|B^{G_{n-r+1}}(1)| \geq 2^{r-1}|B^{G_{1}}(1)|   \geq 2^r|Y \cup K|.   \]
Which proves the claim.

Observe that choosing $r=n$ in~\Cref{claim:balls-in-G_n-grow-exponentially} yields that $|G_{n}|\geq 2^{n}|Y\cup K|$ for every $n\geq 1$. The last computation that we need concerns the diameters of the groups $G_n$ with respect to their respective metrics. 

\begin{claim}
\label{claim:group-n^2-has-ball-of-radiu-n} There exists $n_{0}\geq 0$ such
that for all $n\geq n_{0}$ we have $\diam(G_{n^{2}}) > n$.
\end{claim}
Indeed, since $S_{n^2}\cup S_{n^2}^{-1}$ has at most $2|Y\cup K|+2n^2$ elements, we have $|B^{G_{n^{2}}}(n)|\leq(2|Y\cup K|+2n^{2})^{n}$. On the other hand \Cref{claim:balls-in-G_n-grow-exponentially},  implies 
that $|G_{n^{2}}|\geq2^{(n^{2})}|Y\cup K|$. 
Thus we have:

\begin{align*}
\frac{|B^{G_{n^{2}}}(n)|}{|G_{n^{2}}|} & \leq\frac{(2|Y\cup K|+2n^{2})^{n}}{2^{(n^{2})}|Y\cup K|}\\
 & =\frac{1}{|Y\cup K|}\left(\frac{|Y\cup K|+n^{2}}{2^{n-1}}\right)^{n}
\end{align*}

Since $\lim_{n\to\infty}\frac{|Y\cup K|+n^{2}}{2^{n-1}}=0$, it follows that $\lim_{n\to\infty}\frac{|B^{G_{n^{2}}}(n)|}{|G_{n^{2}}|}=0$. Thus 
 \Cref{claim:group-n^2-has-ball-of-radiu-n} holds.

Now for every $n\geq n_0$ we define $H_{n}=G_{(38n)^{2}}$, and we consider the subshift $X|_{H_n}$ on $H_n$. Our goal is applying \Cref{prop:non-overlapping-multiuso} to $X|_{H_n}$ for some $n$ large enough. Note that $H_{n}$ comes endowed with a word metric, which corresponds to a symmetric generating set of cardinality at most $2|Y\cup K|+2(38n)^2$ and which contains $Y\cup K$.

It will be important in the next part that some elements do not vary with $n$. By the observation at the beginning of this proof, $K$ is a strong irreducibility constant for each $X|_{H_n}$. Furthermore, the value $k$ from \Cref{prop:non-overlapping-multiuso} can be chosen uniformly as $k=1$ because each generating set for $H_n$ contains $K$. We claim that with these choices and with $r=n$, all conditions from~\Cref{prop:non-overlapping-multiuso} are fulfilled for some $n$ large enough. That is:
\begin{enumerate}[(1)]
    \item $|B^{H_n}(38n)|<|L_{B^{H_n}(r_n-1)}(X|_{H_n})|$.
    \item $\diam(H_n) > 38n$.
    \item $n >16|K|^3+2$.
\end{enumerate}
Condition (2) follows directly from the choice $H_n=G_{38n^2}$ and \Cref{claim:group-n^2-has-ball-of-radiu-n}. Condition (3) holds for all $n$ large enough simply because $K$ and $k=1$ do not depend on $n$. Since $L_{B^{H_{n}}(n-1)}(X|_{H_n})=L_{B^{H_{n}}(n-1)}(X)$, it suffices to prove
\begin{equation}
\lim_{n\to\infty}\frac{|B^{H_{n}}(38n)|}{|L_{B^{H_{n}}(n-1)}(X)|}=0.\label{eq:lim-goal}
\end{equation}
For this we will need the previous computations about the groups $(G_n)_{n\in\N}$. Nontriviality of $X$ and~\Cref{prop:lenguaje-de-un-subshit-SI-tiene-muchos-elementos} shows that 
\begin{align}
\frac{|B^{H_{n}}(38n)|}{|L_{B^{H_{n}}(n-1)}(X)|} & \leq|B^{H_{n}}(38n)|\cdot2^{-\frac{|B^{H_{n}}(n-1)|}{2|K|}}.\label{eq:cota}
\end{align}


Since the metric for $H_{n}$ is determined by the generating set $S_{(38n)^{2}}\cup (S_{(38n)^{2}})^{-1}$, and this set has cardinality at most $2|Y\cup K|+2(38n)^{2}$, we have $|B^{H_{n}}(38n)|\leq(2|Y\cup K|+2(38n)^{2})^{38n}$. Furthermore, \Cref{claim:balls-in-G_n-grow-exponentially} shows that $|B^{H_{n}}(n-1)|\geq2^{n-1}|Y\cup K|$. It follows that  \Cref{eq:cota} can be rewritten as
\[\frac{|B^{H_{n}}(38n)|}{|L_{B^{H_{n}}(n-1)}(X)|} \leq 
(2|Y\cup K|+2(38n)^{2})^{38n}\cdot2^{-2^{n-2}\frac{|Y\cup K|}{|K|}}.
\]
Since \[
\lim_{n\to\infty}38n\log_{2}(2|Y\cup K|+(38n)^{2})-2^{n-2}\frac{|Y\cup K|}{|K|} =-\infty,
\]
We obtain that~\Cref{eq:lim-goal} holds for large enough $n$.




Take $n$ large enough such that all hypotheses of~\Cref{prop:non-overlapping-multiuso} are verified for the $H_n$-subshift $X|_{H_n}$ (and the parameters $K$, $k$, and $r=n$ defined before). \Cref{prop:non-overlapping-multiuso}  shows that $X|_{H_n}$ admits a $(B^{H_n}(n),B^{H_n}(37n))$-marker. Since $Y\subset B^{H_n}(n)$, it follows by~\Cref{obs:markers-can-be-completed} that $X|_{H_n}$ also admits a $(Y,B^{H_n}(37n))$-marker. As explained at the beginning of the proof, this pattern is also a $(Y,B^{H_n}(37n))$-marker for the subshift $X$. Finally, since $Y$ is arbitrary, this proves our claim that $X$ is markable.
\end{proof}

Finally, we show~\Cref{thm:markers} in full generality.
\begin{proof}[Proof of \Cref{thm:markers}]
Let $G$ be an infinite group, and let $X$ be a strongly irreducible subshift. If $|X|=1$ then the result is true for the trivial reason that $\fix(X)=G$. We suppose from now on that $|X|>1$. It is clear that exactly one of the following three cases occur:
\begin{enumerate}
    \item $G$ is finitely generated.
    \item $G$ is not finitely generated, but some finitely generated subgroup of $G$ is infinite. 
    \item All finitely generated subgroups of $G$ are finite, so $G$ is infinite and locally finite. 
\end{enumerate}
If $G$ is finitely generated then \Cref{thm:markers-for-G-fg} shows that for all $r$ big enough $X$ admits $(B(r),B(37r))$-markers, and this implies that $X$ is markable by~\Cref{prop:alternative-notions-for-markers}. If $G$ is locally finite then the claim follows directly from \Cref{thm:markers-but-for-locally-finite-group}.

Thus it only remains to consider the case where $G$ is not finitely generated, but some finitely generated subgroup $H\leqslant G$ is infinite. Let $K\Subset G$ be a strong irreducibility constant for $X$, and let $Y\Subset G$ be an arbitrary finite set. Consider the subgroup $H'=\langle Y,K,H\rangle \leqslant G$ generated by $H$, $Y$ and $K$. Thus $H'$ is finitely generated and infinite. By~\Cref{prop:basic-things-SI}, the subshift $X|_{H'}=\{x|_{H'}:x\in X\}$ is strongly irreducible with constant $K$. Since we already proved that the result holds for finitely generated infinite groups, it follows that there is $W\Subset H'$ and a $(Y,W)$-marker $p$ for $X'
$. \Cref{markers-can-be-recovered-from-restricted-subshift} shows that $p$ is also a $(Y,W)$-marker for $X$. Since the set $Y$ is arbitrary, we have proved that $X$ is markable.   
\end{proof}

%
%

\section{Egg markers}\label{sec:buebito}

In this section we construct finite collections of patterns which we call ``egg markers''. The main motivation is that these collections can be used to construct automorphisms of subshifts in a very explicit manner.

\begin{definition}\label{def:egg_markers_for_SI_no_metric}
    Let $G$ be a group, $Y\subset W \Subset G$ and $X\subset A^G$ be a subshift. A collection of patterns $\mathcal{E} \subset \L_W(X)$ is called a $(Y,W)$-\define{egg marker} for $X$ if it satisfies the following properties:
\begin{enumerate}
   
    \item \define{$(Y,W)$-marker:} there exists a $(Y,W)$-marker $d$ such that for every $q\in \mathcal{E}$, $q|_{W\smallsetminus Y}= d$.
    \item \define{pairwise exchangeable}: For every $x \in X$ and $q,q' \in \mathcal{E}$, we have that $x|_{G\smallsetminus W} \vee q \in X$ if and only if $x|_{G\smallsetminus W} \vee q' \in X$.
\end{enumerate}
Given a set $F\subset Y$ and $\mathcal{L}\subset A^F$, we say that $\mathcal E$ \textbf{realizes} $\mathcal{L}$ when $\mathcal{L}=\{q|_{F} : q\in\mathcal E\}$.
\end{definition}

Given a collection of egg markers $\mathcal{E}$, we want to think about the $(Y,W)$ marker $d$ as a common ``egg white'', while the restriction of the egg markers to $Y$ are different ```yolks''. The marker condition makes sure that if two of these patterns occur in some configuration, then the yolk of any of them is ``protected'' by the egg white, in the sense that it cannot overlap the other egg at all. The second condition assures that any yolk may be removed and replaced by another yolk without creating forbidden patterns. We illustrate this for $\ZZ^2$ in~\Cref{fig:buevitos}

\begin{figure}[ht!]
    \centering
    \begin{tikzpicture}[scale = 0.5]
    \foreach \i in {-10,...,10}{
        \foreach \j in {-9,...,9}{
            \draw[fill = black!10] (\i,\j) rectangle (\i+1,\j+1);
        }
    }
        \begin{scope}[shift={(0,0)}]
            \yolk
            \draw[fill = yellow!30] (2,2) rectangle (3,3);
            \draw[fill = yellow!30] (2,3) rectangle (3,4);
            \draw[fill = yellow!30] (3,2) rectangle (4,3);
            \draw[fill = yellow!30] (3,3) rectangle (4,4);  
            \node at (2.5,2.5) {$\mathtt{1}$};
            \node at (3.5,2.5) {$\mathtt{1}$};
            \node at (2.5,3.5) {$\mathtt{1}$};
            \node at (3.5,3.5) {$\mathtt{0}$};
        \end{scope}
        \begin{scope}[shift={(-5,3)}]
            \yolk
            \draw[fill = yellow!30] (2,2) rectangle (3,3);
            \draw[fill = yellow!30] (2,3) rectangle (3,4);
            \draw[fill = yellow!30] (3,2) rectangle (4,3);
            \draw[fill = yellow!30] (3,3) rectangle (4,4);
            \node at (2.5,2.5) {$\mathtt{0}$};
            \node at (3.5,2.5) {$\mathtt{1}$};
            \node at (2.5,3.5) {$\mathtt{0}$};
            \node at (3.5,3.5) {$\mathtt{1}$};
        \end{scope}

        \begin{scope}[shift={(-7,-6)}]
            \yolk
            \draw[fill = yellow!30] (2,2) rectangle (3,3);
            \draw[fill = yellow!30] (2,3) rectangle (3,4);
            \draw[fill = yellow!30] (3,2) rectangle (4,3);
            \draw[fill = yellow!30] (3,3) rectangle (4,4);
            \node at (2.5,2.5) {$\mathtt{1}$};
            \node at (3.5,2.5) {$\mathtt{0}$};
            \node at (2.5,3.5) {$\mathtt{0}$};
            \node at (3.5,3.5) {$\mathtt{1}$};
        \end{scope}

        \begin{scope}[shift={(4,-8)}]
            \yolk
            \draw[fill = yellow!30] (2,2) rectangle (3,3);
            \draw[fill = yellow!30] (2,3) rectangle (3,4);
            \draw[fill = yellow!30] (3,2) rectangle (4,3);
            \draw[fill = yellow!30] (3,3) rectangle (4,4);  
            \node at (2.5,2.5) {$\mathtt{1}$};
            \node at (3.5,2.5) {$\mathtt{1}$};
            \node at (2.5,3.5) {$\mathtt{1}$};
            \node at (3.5,3.5) {$\mathtt{1}$};
        \end{scope}
    \end{tikzpicture}
    \caption{A collection of egg markers in $\{\texttt{0},\texttt{1}\}^{\ZZ^2}$.}
    \label{fig:buevitos}
\end{figure}

\subsection{Egg models}

    Let $G$ be an infinite group, let $X\subset A^G$ be a subshift and $\mathcal{E}$ a $(Y,W)$-egg marker. We let $A_{\mathcal{E}} = \mathcal{E}\cup \{\star\}$ and consider the map $\eta_{\mathcal{E}}\colon X \to (A_{\mathcal{E}})^G$ given by
    \[  \eta_{\mathcal{E}}(x)(g) = \begin{cases}
         (g^{-1}x)|_{W} & \mbox{ if } (g^{-1}x)|_{W} \in \mathcal{E}\\
        \star & \mbox{ otherwise.}
    \end{cases}  \]

    We say that $\eta_{\mathcal{E}}(X)$ is the \define{egg-model} subshift associated to $(X,\mathcal{E})$. It is clear that $\eta_{\mathcal{E}}(X)$ is a $G$-subshift (in fact, a topological factor of $X$). The next lemma essentially says that this factor behaves like a full $G$-shift in the positions with markers.

    \begin{lemma}\label{lem:egg_exchanges_well_defined}
         Let $G$ be an infinite group, let $X\subset A^G$ be a subshift and $\mathcal{E}$ a $(Y,W)$-egg marker. For $x \in X$ let \[\Delta_x = \{ g \in G : (g^{-1}x)|_{W} \in \mathcal{E} \}.  \]
         For every map $\theta \colon \Delta_x \to \mathcal{E}$, we have that $x_{\theta}\in X$, where $x_{\theta}$ is given at $g \in G$ by \[  x_{\theta}(g) =\begin{cases}
        \bigl(\theta(h)\bigr)(w) & \mbox{ if there exists } h \in G, w \in W \mbox{ such that } g=hw \mbox{ and } (h^{-1}x)|_{W}\in \mathcal{E}. \\
        x(g) & \mbox{ otherwise.}
    \end{cases} \]
    \end{lemma}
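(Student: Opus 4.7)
The proof would proceed in two steps: (i) verify that the formula defining $x_\theta$ assigns a single value at each position, and (ii) show $x_\theta \in X$. For (ii), I would realize $x_\theta$ as a prodiscrete limit of configurations obtained by performing finitely many ``egg-swaps'' on $x$ and then invoke compactness of $X$.

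\textbf{Well-definedness.} Suppose $g \in G$ admits two representations $g = h_1 w_1 = h_2 w_2$ with $h_i \in \Delta_x$, $w_i \in W$, and set $\gamma = h_1^{-1}h_2$. The case $\gamma = \Id_G$ is trivial. If $\gamma \neq \Id_G$ and $\gamma \notin \fix(X)$, I would argue that $\gamma \notin WY^{-1}$: writing $d$ for the common marker of $\mathcal{E}$, the identity $x|_{h_i(W\smallsetminus Y)} = h_i d$ for $i=1,2$ shows that $d$ and $\gamma d$ agree on $(W\smallsetminus Y)\cap\gamma(W\smallsetminus Y)$, which together with $d$ being non-$\gamma$-overlapping for $\gamma\in WY^{-1}\smallsetminus\fix(X)$ forces $\gamma \notin WY^{-1}$. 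Consequently $h_i Y \cap h_j W = \varnothing$ for $\{i,j\}=\{1,2\}$, so necessarily $w_1, w_2 \in W \smallsetminus Y$, giving $\theta(h_1)(w_1) = d(w_1) = x(g) = d(w_2) = \theta(h_2)(w_2)$. The remaining case $\gamma \in \fix(X)\smallsetminus\{\Id_G\}$ is handled using that every $q \in L_W(X)$ is automatically $\gamma$-overlapping (since $\fix(X)$ is normal in $G$ and acts trivially on $X$), together with the fact that $\Delta_x$ is a union of $\fix(X)$-cosets on which $\theta$ must be interpreted coherently.

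\textbf{Membership in $X$.} For each finite $F \subset \Delta_x$, I would define $x_F$ as the configuration agreeing with $x$ off $FW = \bigcup_{h\in F} hW$ and with $h\theta(h)$ on each $hW$, $h\in F$. Step (i) applied to $F$ guarantees $x_F$ is well defined, and clearly $x_F \to x_\theta$ in the prodiscrete topology as $F$ exhausts $\Delta_x$. I would then prove $x_F \in X$ by induction on $|F|$, the base case $F = \varnothing$ being just $x \in X$. For the inductive step, given $h \in \Delta_x \smallsetminus F$, the analysis from step (i) shows that $h'W \cap hY = \varnothing$ for every $h' \in F$, so $(h^{-1}x_F)|_Y = (h^{-1}x)|_Y$; moreover, any overlap $h'W \cap h(W\smallsetminus Y)$ lies in egg-white positions where the value is pinned down to be $d$. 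Hence $(h^{-1}x_F)|_W = (h^{-1}x)|_W \in \mathcal{E}$, and pairwise exchangeability of $\mathcal{E}$ applied to $h^{-1}x_F$ gives $x_{F \cup \{h\}} \in X$. Compactness of $X$ then yields $x_\theta \in X$.

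\textbf{Main obstacle.} The core of the argument is the marker property, which confines any overlap of distinct marker supports $h_1 W, h_2 W$ to the egg-white region where $x$ already fixes all values, letting yolks be reassigned independently across $\Delta_x$. The only real bookkeeping concerns the case $\fix(X)\neq\{\Id_G\}$, where positions $h_1, h_2$ related by a nontrivial element of $\fix(X)$ carry the same block of $x$; this is resolved by normality of $\fix(X)$ and the automatic $\fix(X)$-invariance of patterns in $L_W(X)$.
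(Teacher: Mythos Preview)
Your proposal is correct and follows the same two-step plan as the paper: establish well-definedness of $x_\theta$ from the marker property, then realize $x_\theta$ as a limit of configurations obtained from $x$ by finitely many applications of pairwise exchangeability, invoking compactness of $X$. The paper's version of step (i) is marginally more direct---it first observes that all patterns in $\mathcal{E}$ agree on $W\smallsetminus Y$, so one may assume $w_1,w_2\in Y$, whence $h_1^{-1}h_2=w_1w_2^{-1}\in YY^{-1}\subset WY^{-1}$ immediately forces $h_1^{-1}h_2\in\fix(X)$---but the content is identical, and your explicit flagging of the $\fix(X)$ bookkeeping is if anything more careful than the paper's own treatment.
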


    \begin{proof}
       First we show that $x_{\theta}$ is well defined in $A^G$. Notice that as all patterns in $\mathcal{E}$ coincide in $W\smallsetminus Y$, we have that \[  x_{\theta}(g) =\begin{cases}
        \bigl(\theta(h)\bigr)(w) & \mbox{ if there exists } h \in G, w \in Y \mbox{ such that } g=hw \mbox{ and } (h^{-1}x)|_{W}\in \mathcal{E}. \\
        x(g) & \mbox{ otherwise.}
    \end{cases} \]
    Let $g \in G$ and suppose there are $h,h'\in G$, $w,w' \in Y$ with $g = hw = h'w'$ and $q,q' \in \mathcal{E}$ such that $(h^{-1}x)|_{W}=q$ and $((h')^{-1}x)|_{W}=q'$. If we let $d$ be the common marker for $\mathcal{E}$ with support $W\smallsetminus Y$, then $(h^{-1}x)|_{W\smallsetminus Y}=d=((h')^{-1}x)|_{W\smallsetminus Y}$. It follows that $x \in h[d]\cap h'[d]\cap X.$ and in particular that \[[d]\cap (h')^{-1}h[d]\cap X  = [d]\cap w'w^{-1}[d]\cap X \neq \varnothing.\] 
    As $d$ is a $(Y,W)$-marker and $w'w^{-1}\in YY^{-1}\subset WY^{-1}$, it follows that $w'w^{-1}\in \operatorname{Fix}(X)$ and thus that $h'h^{-1} \in \operatorname{Fix}(X)$. We deduce that $q=q'$ and thus that $\bigl(\theta(h)\bigr)(w) = \bigl(\theta(h')\bigr)(w')$. Hence $x_{\theta}$ is well-defined.
    
    Next we show that $x_{\theta} \in X$. Note that for each $F\Subset G$ we have $[x_{\theta}|_F]\cap X\ne\varnothing$. This follows by applying the pairwise exchangeable condition to $x$ finitely many times (at most $|F|$ times). Thus the collection of closed sets $\{[x_{\theta}|_F]\cap X : F\Subset G\}$ has the finite intersection property, and since $X$ is compact, it has nonempty intersection. A direct verification shows that this intersection is equal to the singleton $\{x_{\theta}\}$, and thus our claim that $x_{\theta}\in X$ is proved. 
    \end{proof}

    \begin{definition}
        An automorphism $\varphi \in \Aut(\eta_{\mathcal{E}}(X))$ is called an \define{egg automorphism} if it fixes the position of stars. Namely, if \[ \varphi(\eta_{\mathcal{E}}(x))(g)=\star \mbox{ if and only if } \eta_{\mathcal{E}}(x)(g)=\star \mbox{ for every } x \in X, g \in G. \]
        We denote the space of egg automorphisms by $\Aut_{\mathcal{E}}(\eta_{\mathcal{E}}(X))$
    \end{definition}
    We note that the space $\Aut_{\mathcal{E}}(\eta_{\mathcal{E}}(X))$ of egg automorphisms is a subgroup of $\Aut(\eta_{\mathcal{E}}(X))$.

\begin{lemma}\label{lem:lema_maestro_de_los_reemplzos}
    Let $G$ be an infinite group, let $X\subset A^G$ be a subshift and $\mathcal{E}$ a $(Y,W)$-egg marker. Consider the map $\Psi_{\mathcal{E}} \colon \Aut_{\mathcal{E}}(\eta_{\mathcal{E}}(X)) \to \Aut(X)$, where for $\varphi \in \Aut_{\mathcal{E}}(\eta_{\mathcal{E}}(X))$, $x \in X$ and $g \in G$ we have 

    \[ \bigl(\Psi_{\mathcal{E}}(\varphi)\bigr)(x)(g) =  \begin{cases}
        \bigl(\varphi(\eta_{\mathcal{E}}(x))(h)\bigr)(w) & \mbox{if there exists } h \in G, w \in W \mbox{ such that } g=hw \mbox{ and } (h^{-1}x)|_{W}\in \mathcal{E}, \\
        x(g) & \mbox{otherwise.}
    \end{cases}  \]
    Then $\Psi_{\mathcal{E}}$ is a monomorphism. In particular, $\Aut_{\mathcal{E}}(\eta_{\mathcal{E}}(X)) $ embeds into $\Aut(X)$.
\end{lemma}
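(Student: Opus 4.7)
The plan is to verify six claims in turn: (i) the formula defining $\Psi_{\mathcal{E}}(\varphi)(x)$ gives an element of $A^G$; (ii) this element lies in $X$; (iii) it is continuous and $G$-equivariant as a function of $x$; (iv) $\Psi_{\mathcal{E}}$ respects composition; (v) each $\Psi_{\mathcal{E}}(\varphi)$ is a homeomorphism (with inverse $\Psi_{\mathcal{E}}(\varphi^{-1})$); and (vi) $\Psi_{\mathcal{E}}$ is injective. The heavy lifting is concentrated in (i) and (ii), since these are where the marker and exchangeability hypotheses play a role; the remaining steps are essentially bookkeeping.

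For (i), I would argue exactly as in the opening paragraph of the proof of~\Cref{lem:egg_exchanges_well_defined}: if $g\in G$ admits two decompositions $g=hw=h'w'$ with $w,w'\in Y$ and with $(h^{-1}x)|_W, ((h')^{-1}x)|_W\in\mathcal{E}$, then $h'h^{-1}=w'w^{-1}\in YY^{-1}\subset WY^{-1}$ and, since the common restriction $d$ is a $(Y,W)$-marker that appears at both $h$ and $h'$ in $x$, we conclude $h'h^{-1}\in\fix(X)$. Because $\varphi$ is an egg automorphism, $\varphi(\eta_{\mathcal{E}}(x))(h)$ and $\varphi(\eta_{\mathcal{E}}(x))(h')$ are again patterns in $\mathcal{E}$, and the same fix argument (using $\eta_{\mathcal{E}}(x)=\eta_{\mathcal{E}}(h'h^{-1}x)$ together with $G$-equivariance of $\varphi$) shows they agree up to the translation $h'h^{-1}$, so the two candidate values at $g$ coincide. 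For (ii), set $\Delta_x=\{h\in G:(h^{-1}x)|_W\in\mathcal{E}\}$ and $\theta(h)=\varphi(\eta_{\mathcal{E}}(x))(h)$ for $h\in\Delta_x$; since $\varphi$ is an egg automorphism, $\theta$ is indeed a map $\Delta_x\to\mathcal{E}$, and the resulting $\Psi_{\mathcal{E}}(\varphi)(x)$ is exactly the configuration $x_\theta$ of~\Cref{lem:egg_exchanges_well_defined}, which therefore belongs to $X$.

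For (iii), continuity and $G$-equivariance of $\eta_{\mathcal{E}}$ and of $\varphi$, together with the fact that whether $g\in \Delta_x$ depends only on $x|_{gW}$, show that $\Psi_{\mathcal{E}}(\varphi)(x)(g)$ is determined by the restriction of $x$ to a finite neighborhood of $g$ that translates with $g$; an appeal to the CHL theorem (\Cref{thm:curtis_lindon_hedlund}) then yields that $\Psi_{\mathcal{E}}(\varphi)$ is a morphism of $X$. For (iv), I would unfold the definition: given $\varphi,\psi\in\Aut_{\mathcal{E}}(\eta_{\mathcal{E}}(X))$ and $x\in X$, observe that on $gW$ for $g\in\Delta_x$ the configuration $\Psi_{\mathcal{E}}(\psi)(x)$ still carries an egg from $\mathcal{E}$ (namely $\psi(\eta_{\mathcal{E}}(x))(g)$), so $\Delta_{\Psi_{\mathcal{E}}(\psi)(x)}=\Delta_x$ and $\eta_{\mathcal{E}}(\Psi_{\mathcal{E}}(\psi)(x))=\psi(\eta_{\mathcal{E}}(x))$; applying $\Psi_{\mathcal{E}}(\varphi)$ and using this identity yields $\Psi_{\mathcal{E}}(\varphi)\circ\Psi_{\mathcal{E}}(\psi)=\Psi_{\mathcal{E}}(\varphi\circ\psi)$. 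Consequently $\Psi_{\mathcal{E}}(\varphi)\circ\Psi_{\mathcal{E}}(\varphi^{-1})$ and $\Psi_{\mathcal{E}}(\varphi^{-1})\circ\Psi_{\mathcal{E}}(\varphi)$ both equal $\Psi_{\mathcal{E}}(\mathrm{id})=\mathrm{id}_X$, which settles (v).

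Finally, for injectivity (vi), suppose $\Psi_{\mathcal{E}}(\varphi)=\mathrm{id}_X$. Then for every $x\in X$ and every $g\in\Delta_x$, the restriction of $x=\Psi_{\mathcal{E}}(\varphi)(x)$ to $gW$ equals $\varphi(\eta_{\mathcal{E}}(x))(g)\in\mathcal{E}$, so $\varphi(\eta_{\mathcal{E}}(x))(g)=\eta_{\mathcal{E}}(x)(g)$; at positions outside $\Delta_x$, $\varphi$ fixes the $\star$ symbol by hypothesis. Hence $\varphi$ fixes every point in the image $\eta_{\mathcal{E}}(X)$, which is precisely its domain, and therefore $\varphi$ is the identity. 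The main obstacle is conceptual rather than technical: making the well-definedness argument rigorous by correctly invoking $\fix(X)$ through the $(Y,W)$-marker condition, and recognizing that (ii) reduces cleanly to the previous lemma once $\theta$ is chosen via $\varphi$.
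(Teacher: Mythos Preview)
Your proof is correct and follows essentially the same route as the paper: reduce well-definedness and membership in $X$ to \Cref{lem:egg_exchanges_well_defined}, check that $\Psi_{\mathcal{E}}(\varphi)$ is a morphism, verify the homomorphism property via the key identity $\eta_{\mathcal{E}}\bigl(\Psi_{\mathcal{E}}(\psi)(x)\bigr)=\psi(\eta_{\mathcal{E}}(x))$, and deduce injectivity from that same identity. One small point worth tightening in step (iv): you only argue the inclusion $\Delta_x\subseteq\Delta_{\Psi_{\mathcal{E}}(\psi)(x)}$, whereas the equality (equivalently, the identity $\eta_{\mathcal{E}}\circ\Psi_{\mathcal{E}}(\psi)=\psi\circ\eta_{\mathcal{E}}$ at $\star$-positions) also needs the reverse inclusion, which follows from a short marker-overlap argument; the paper glosses over this as well.
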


\begin{proof}
    Fix an egg automorphism $\varphi$ and notice by that by~\Cref{lem:egg_exchanges_well_defined}, the map $\Psi_{\mathcal{E}}(\varphi) \colon X \to X$ is well-defined. Furthermore, as $\varphi$ and $\eta_{\mathcal{E}}$ are continuous, it follows that $\Psi_{\mathcal{E}}$ is continuous. 
    
    Let $g,t \in G$. We have that 
    \begin{align*}
        \bigl(\Psi_{\mathcal{E}}(\varphi) \bigr)(tx)(g) &  = \begin{cases}
        \bigl(\varphi(\eta_{\mathcal{E}}(tx))(h)\bigr)(w) & \mbox{if } \exists h \in G, w \in W \mbox{ such that } g=hw \mbox{ and } (h^{-1}tx)|_{W}\in \mathcal{E}, \\
            (tx)(g) & \mbox{otherwise.} \end{cases} \\
            &  = \begin{cases}
            \bigl(\varphi(\eta_{\mathcal{E}}(x))(h)\bigr)(w) & \mbox{if  } \exists h \in G, w \in W \mbox{ such that } t^{-1}g=hw \mbox{ and } (h^{-1}x)|_{W}\in \mathcal{E}, \\
            x(t^{-1}g) & \mbox{otherwise.}  \end{cases} \\
            & =  \bigl(\Psi_{\mathcal{E}}(\varphi) \bigr)(x)(t^{-1}g)\\
            & =  \Bigl(t\bigl(\Psi_{\mathcal{E}}(\varphi) \bigr)(x)\Bigr)(g).
    \end{align*}
    
    It follows that $\Psi_{\mathcal{E}}(\varphi)$ is $G$-equivariant, and thus we have $\Psi_{\mathcal{E}}(\varphi)\in \operatorname{End}(X)$. Finally, a direct computation shows that for $\varphi_1,\varphi_2\in \Aut_{\mathcal{E}}(\eta_{\mathcal{E}}(X))$ then $\Psi_{\mathcal{E}}(\varphi_1 \circ \varphi_2) = \Psi_{\mathcal{E}}(\varphi_1) \circ \Psi_{\mathcal{E}}(\varphi_2)$, and thus $\Psi_{\mathcal{E}}$ is a homomorphism. In particular, $\Psi_{\mathcal{E}}(\varphi) \in \Aut(X)$ for every $\varphi \in \Aut_{\mathcal{E}}(\eta_{\mathcal{E}}(X))$.

    Finally, notice that by construction we have that for every $\varphi \in \Aut_{\mathcal{E}}(\eta_{\mathcal{E}}(X))$, then \[ \eta_{\mathcal{E}}\bigl(\Psi_{\mathcal{E}}(\varphi)(x)\bigr) = \varphi( \eta_{\mathcal{E}}(x)).   \]
    From where it follows that $\Psi_{\mathcal{E}}$ is injective.\end{proof}

    \Cref{lem:lema_maestro_de_los_reemplzos} will be the main tool to prove our embedding results. Namely, instead of embedding the automorphism group of a full shift directly on $\Aut(X)$, we do it on the space of egg automorphisms, which is much easier.

    For the proof of~\Cref{thm:Ryan} we will need a particular type of egg automorphism which arises from permutations of $\mathcal{E}$.
    
\begin{corollary}\label{cor:automorfismo_reemplazos_bien_definido}
    Let $G$ be an infinite group, let $X\subset A^G$ be a subshift and $\mathcal{E}$ a $(Y,W)$-egg marker. Let $\sigma \in \operatorname{Sym}(\mathcal{E})$ and consider the map $\phi_{\sigma}\colon X \to X$ given for every $g \in G$ by \[
        \phi_{\sigma}(x)(g) = \begin{cases}
            \Bigl(\sigma\bigl((h^{-1}x)|_{W}\bigr)\Bigr)(w) & \mbox{ if there exists } h \in G, w \in W \mbox{ such that } g=hw \mbox{ and } (h^{-1}x)|_{W}\in \mathcal{E}. \\
            x(g) & \mbox{otherwise.}
    \end{cases}\]
    The map $\phi_{\sigma} \in \Aut(X)$.
\end{corollary}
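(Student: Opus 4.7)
The plan is to deduce the corollary from Lemma~\ref{lem:lema_maestro_de_los_reemplzos} by lifting $\sigma$ to an egg automorphism of the egg model. Specifically, I would extend $\sigma$ to a permutation $\bar\sigma$ of $A_{\mathcal{E}} = \mathcal{E}\cup\{\star\}$ by declaring $\bar\sigma(\star)=\star$, and then define $\varphi_\sigma \colon \eta_{\mathcal{E}}(X) \to \eta_{\mathcal{E}}(X)$ by the $0$-block sliding rule $\varphi_\sigma(y)(g) = \bar\sigma(y(g))$. Being a letter-by-letter map, $\varphi_\sigma$ is trivially continuous and $G$-equivariant; since $\bar\sigma$ fixes $\star$ and permutes $\mathcal{E}$, $\varphi_\sigma$ preserves the position of stars; and the analogous map $\varphi_{\sigma^{-1}}$ is a two-sided inverse. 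So once I verify that $\varphi_\sigma$ maps $\eta_{\mathcal{E}}(X)$ into itself, it follows that $\varphi_\sigma \in \Aut_{\mathcal{E}}(\eta_{\mathcal{E}}(X))$, and a direct unwinding of the formulas shows that $\Psi_{\mathcal{E}}(\varphi_\sigma) = \phi_\sigma$, yielding $\phi_\sigma \in \Aut(X)$.

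To establish invariance, fix $y = \eta_{\mathcal{E}}(x)$ with $x \in X$ and define $\theta \colon \Delta_x \to \mathcal{E}$ by $\theta(h) = \sigma((h^{-1}x)|_W)$. Lemma~\ref{lem:egg_exchanges_well_defined} then produces a configuration $x_\theta \in X$. I would then show the equality $\varphi_\sigma(y) = \eta_{\mathcal{E}}(x_\theta)$, which by the formulas reduces to proving $\Delta_x = \Delta_{x_\theta}$: indeed, for $g \in \Delta_x$ the construction of $x_\theta$ yields $(g^{-1}x_\theta)|_W = \sigma((g^{-1}x)|_W) \in \mathcal{E}$, which settles one inclusion and simultaneously matches the values of $\varphi_\sigma(y)$ and $\eta_{\mathcal{E}}(x_\theta)$ at such $g$.

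The only nontrivial point is the reverse inclusion $\Delta_{x_\theta} \subset \Delta_x$. Given $g \in \Delta_{x_\theta}$, a shifted copy of the common marker $d$ appears in $x_\theta$ at $g$. Since $x_\theta$ and $x$ only differ on $\bigcup_{h \in \Delta_x} hY$, the marker $d$ still appears at every $h \in \Delta_x$ inside $x_\theta$. Applying the $(Y,W)$-marker property of $d$ to the two copies at $g$ and at some $h \in \Delta_x$ inside $x_\theta$, one obtains the dichotomy $g^{-1}h \in \operatorname{Fix}(X)$ or $g^{-1}h \notin WY^{-1}$ (equivalently $gW \cap hY = \varnothing$). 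In the first case $(g^{-1}x)|_W = (h^{-1}x)|_W \in \mathcal{E}$, so $g \in \Delta_x$. If the second case holds for every $h \in \Delta_x$, then $gW$ is disjoint from the support of the modifications, so $(g^{-1}x)|_W = (g^{-1}x_\theta)|_W \in \mathcal{E}$ and again $g \in \Delta_x$. This will complete the proof.
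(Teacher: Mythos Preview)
Your proof is correct and follows essentially the same route as the paper: lift $\sigma$ to the $0$-block egg automorphism $\varphi_\sigma$ (the paper's $\mu$) and apply Lemma~\ref{lem:lema_maestro_de_los_reemplzos}. Your verification of $\Delta_{x_\theta}\subset\Delta_x$ via the marker dichotomy is in fact more explicit than the paper's, which simply invokes Lemma~\ref{lem:egg_exchanges_well_defined} for well-definedness without spelling out why no new egg markers appear in $x_\theta$.
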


\begin{proof}
    Consider the map $\mu\colon \eta_{\mathcal{E}}(X)\to \eta_{\mathcal{E}}(X)$, where for $x\in X$ and $g \in G$, \[ \mu( \eta_{\mathcal{E}}(x))(g) = \begin{cases}
        \sigma( \eta_{\mathcal{E}}(x)(g)) & \mbox{if }  \eta_{\mathcal{E}}(x)(g) \in \mathcal{E},\\
        \star & \mbox{otherwise.}
    \end{cases}  \]
    By~\Cref{lem:egg_exchanges_well_defined} we have that $\mu$ is well-defined. Furthermore, the map $\mu$ is continuous, $G$-equivariant and fixes the position of stars, thus $\mu \in \Aut_{\mathcal{E}}(\eta_{\mathcal{E}}(X))$. By~\Cref{lem:lema_maestro_de_los_reemplzos} we have that $\Psi_{\mathcal{E}}(\mu) \in \Aut(X)$. A direct computation shows that $\phi_{\sigma} = \Psi_{\mathcal{E}}(\mu)$.\end{proof}

\subsection{Existence of egg markers}


\begin{definition}\label{def:subshift-with-many-egg-markers}
    Let $G$ be an infinite group. A $G$-subshift $X$ \define{admits complete egg markers} if for every $F\Subset G$ there exists a collection of egg markers that realizes $L_F(X)$.  
\end{definition}
        
        \begin{lemma}\label{lem:existence_egg_markers_general}
    Let $G$ be an infinite group and let $X$ be a nontrivial strongly irreducible $G$-subshift. Then $X$ admits complete egg-markers. 
    \end{lemma}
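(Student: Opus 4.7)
The plan is the following. Given $F \Subset G$, let $K$ be a symmetric strong irreducibility constant for $X$ containing $\Id_G$. Using the markability of $X$ (\Cref{thm:markers}) together with the reformulation in \Cref{prop:alternative-notions-for-markers}, first choose $Y \Subset G$ with $FK \subset Y$, and then choose $W \Subset G$ with $YK \subset W$ such that $X$ admits a $(Y,W)$-marker $d$. The extra ``buffer'' condition $YK \subset W$, which is free thanks to \Cref{prop:alternative-notions-for-markers}, will be essential at the end.

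Following a suggestion of Ville Salo, the construction is anchored by a single configuration. Fix $x^* \in X$ with $x^*|_{W \smallsetminus Y} = d$, which exists since $d \in L_{W \smallsetminus Y}(X)$. Define
\[
  \mathcal{E} \;=\; \{\, y|_W \,:\, y \in X \text{ and } y|_{G \smallsetminus Y} = x^*|_{G \smallsetminus Y} \,\}.
\]
By construction every $q \in \mathcal{E}$ satisfies $q|_{W \smallsetminus Y} = d$, so the first condition in \Cref{def:egg_markers_for_SI_no_metric} holds with common marker $d$. To see that $\mathcal{E}$ realizes $L_F(X)$, take $p \in L_F(X)$; because $FK \subset Y$ we have $FK \cap (G\smallsetminus Y) = \varnothing$, so strong irreducibility applied through \Cref{lem:K-irreducible-mainlemma} (on finite subsets, then passing to an infinite limit by compactness) yields $y \in X$ with $y|_F = p$ and $y|_{G \smallsetminus Y} = x^*|_{G \smallsetminus Y}$, and hence $y|_W \in \mathcal{E}$ with $(y|_W)|_F = p$.

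The main obstacle is the pairwise exchangeability. Given $x \in X$ and $y, y' \in X$ that both agree with $x^*$ outside $Y$, suppose $z = x|_{G \smallsetminus W} \vee y|_W \in X$; the goal is $z' = x|_{G \smallsetminus W} \vee y'|_W \in X$. The starting observation is that $z$ and $z'$ already agree on $G \smallsetminus Y$, since both equal $x$ on $G \smallsetminus W$ and both equal $d$ on $W \smallsetminus Y$. Hence $z'$ is obtained from $z$ by a swap concentrated inside $Y$, replacing $z|_Y$ by $y'|_Y$. To show $z' \in X$, I plan to invoke compactness: it suffices to produce, for every $F' \Subset G$, a configuration $w \in X$ with $w|_{F'} = z'|_{F'}$. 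Split $F' = F_1 \sqcup F_2$ with $F_1 = F' \cap Y$ and $F_2 = F' \smallsetminus Y$; the target pattern is $y'|_{F_1} \vee z|_{F_2}$, and a witness $w$ should be assembled by applying strong irreducibility iteratively through the $K$-layers $F_1, F_1 K \cap F_2, F_1 K^2 \cap F_2, \ldots$, exploiting at each stage the crucial fact that $y'$ and $z$ coincide with $d$ on $W \smallsetminus Y$.

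The delicate point is the $K$-boundary between the region $Y$, on which $y'$ is prescribed, and the region $G \smallsetminus W$, on which $z$ is prescribed. The buffer choice $YK \subset W$ places the first layer $YK \smallsetminus Y$ inside the marker region $W \smallsetminus Y$, where $y'$ and $z$ are both equal to $d$; this matching of values around the $K$-neighborhood of $Y$ is exactly what lets strong irreducibility close the gap and extend the glued pattern to the next layer. Since $F'$ is finite, only finitely many such iterations are needed to cover $F_1 K^n \cap F'$, after which a final application of strong irreducibility attaches the remaining piece of $F_2$ that sits far from $Y$, producing the desired $w$ and completing the proof.
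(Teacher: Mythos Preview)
Your setup and the realization of $L_F(X)$ are fine, but the pairwise exchangeability argument has a genuine gap. Strong irreducibility lets you glue two patterns that are $K$-separated; it does \emph{not} let you glue patterns that merely agree on a common buffer. Concretely, your target pattern on $F'$ is $y'|_{F'\cap Y}\vee z|_{F'\smallsetminus Y}$, and the only way to apply strong irreducibility directly is to separate $F'\cap Y$ from some set like $F'\smallsetminus YK$. That produces \emph{some} $w\in X$ matching $y'$ on $F'\cap Y$ and $z$ on $F'\smallsetminus YK$, but you have no control over $w$ on the missing layer $F'\cap(YK\smallsetminus Y)$: there is no reason it equals $d$ there. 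Your proposed ``iteration through $K$-layers'' does not change this; each application of strong irreducibility forfeits control on the $K$-layer between the two pieces, so you can never force the specific pattern $z'|_{F'}$. Enlarging the buffer to $YK^n\subset W$ does not help either, because beyond $W$ you know nothing about $z$ (it equals the arbitrary $x$), while $y'$ equals $x^*$, and these need not agree.

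The missing idea, which the paper supplies, is a minimality argument. For each finite $T$ and each pattern $q\in L_{T\smallsetminus FK}(X)$, consider the set $\operatorname{Ext}(q)$ of patterns on $FK$ that extend $q$ to an element of $L_T(X)$; this is monotone decreasing under pattern extension. Choose $q_0$ so that $\operatorname{Ext}(q_0)$ is \emph{minimal} for inclusion, and build the common egg-white $p_0$ by placing a marker around $q_0$. Now if $x\in X$ has $p_0$ as its egg-white, then for every finite $V\supset W$ we have $\operatorname{Ext}(x|_{V\smallsetminus FK})=\operatorname{Ext}(q_0)$ by minimality, so every yolk in $\operatorname{Ext}(q_0)$ can be inserted into $x$ on $V$; compactness then gives the global swap. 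In other words, the choice of anchor is not arbitrary (your $x^*$ is), and minimality is precisely what substitutes for the strong TMP you are implicitly invoking.
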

    \begin{proof}
        Fix $F\Subset G$. We will show that $X$ admits a collection of egg markers realizing $L_F(X)$. Fix a strong irreducibility constant $K$ for $X$. For $T\Subset G$ and $q \in L_{T\smallsetminus FK}(X)$ we consider the set of extensions of $q$ to $T$ given by \[ \operatorname{Ext}(q) =  \{ p \in L_{FK}(X) : p \vee q \in L_{T}(X)\}.        \]

        Observe that if $T\subset T'$, $q \in L_{T\smallsetminus FK}(X)$ and $q' \in L_{T'\smallsetminus FK}(X)$ are such that $q \sqsubset q'$, then $\operatorname{Ext}(q')\subset \operatorname{Ext}(q)$, that is, the set of extensions is monotonously decreasing under pattern extensions. Choose some $T_0\Subset G$ and $q_0 \in L_{T_0\smallsetminus FK}(X)$ such that $\operatorname{Ext}(q_0)$ is minimal for inclusion. 
        
       By~\Cref{thm:markers} we have that $X$ is markable and thus there exists $W\Subset G$ such that $T_0K \subsetneq W$ and there exists a $(T_0K,W)$-marker $d$.

        As $X$ is strong irreducible with constant $K$, there exists $x \in X$ such that $x|_{T_0 \smallsetminus FK}=q_0$ and $x|_{W \smallsetminus T_0K}=d$. Let $p_0 = x|_{W \smallsetminus FK}$. By~\Cref{obs:markers-can-be-completed} it follows that $p_0$ is an $(FK,W)$-marker. Let \[\mathcal{E} = \{q\in L_{W}(X) : q|_{W\smallsetminus FK} =p_0\}.\] 

        We take $Y=FK$ and claim that $\mathcal{E}$ is a set $(Y,W)$-egg markers which realizes $L_F(X)$. By construction, it is clear that for every $q \in \mathcal{E}$ we have $q|_{W \smallsetminus Y}=p_0$ which is a $(Y,W)$-marker. Furthermore, as $X$ is strongly irreducible and since $F$ and $W\smallsetminus FK$ are $K$-disjoint, we get that $\{q|_F : q \in \mathcal{E}\} = L_F(X)$. Thus we only need to check that $\mathcal{E}$ is pairwise exchangeable.

        Fix $q,q' \in \mathcal{E}$ and let $x \in X$ such that $x|_{W} = q$. Fix $V\Subset G$ such that $W\subset V$. As $q_0\sqsubset p_0 \sqsubset x|_{V\smallsetminus FK}$ and $\operatorname{Ext}(q_0)$ is minimal, it follows that $\operatorname{Ext}(x|_{V\smallsetminus FK}) = \operatorname{Ext}(p_0)=\operatorname{Ext}(q_0)$. Since $q'_{FK} \in \operatorname{Ext}(p_0)$, we obtain that $p_V = q'_{FK} \vee x|_{V\smallsetminus FK} \in L_V(X)$.

        Consider the collection of closed sets given by \[ \{ [p_V] \cap X : V \Subset G \mbox{ and } V \supset W\}. \]
        This collection has the finite intersection property, and thus, as $X$ is compact, it follows that it has nonempty intersection. Let $y\in X$ be an element in the intersection, by construction it follows that $y|_{G \smallsetminus FK}=x|_{G \smallsetminus FK}$ and $y|_{FK} = q'|_{FK}$, therefore $y = x_{G \smallsetminus W} \vee q'$.\end{proof}

        Next we show that under the assumption of strong TMP, we can give explicit bounds to the relative size of the support of egg markers. This will be crucial in the proof of~\Cref{thm:EmbeddingF2}.

\begin{lemma}\label{lem:existence-of-linear-egg-markers}
        Let $G$ be a finitely generated infinite group, and let $X$ be a nontrivial strongly irreducible subshift with the strong TMP. Then for all large enough integers $r$,  there is a collection of $(B(2r),B(74r))$-egg markers for $X$ that realizes $L_{B(r)}(X)$. 
\end{lemma}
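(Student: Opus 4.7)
The plan is to follow the strategy of~\Cref{lem:existence_egg_markers_general} but use the strong TMP as a quantitative substitute for the compactness-plus-minimality argument, which lets us respect the prescribed sizes. The key numerical observation is that $B(74r) = B(37 \cdot 2r)$, so~\Cref{thm:markers} directly delivers a $(B(2r), B(74r))$-marker for every large enough $r$.

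I would first fix a symmetric strong irreducibility constant $K$ with $K \subset B(k)$ and a memory constant $M$ with $M \subset B(m)$, and restrict attention to integers $r$ large enough that three conditions hold simultaneously: (a) $X$ admits a $(B(2r), B(74r))$-marker $d$, provided by~\Cref{thm:markers}; (b) $r \geq k$, so that $B(r)K \subset B(2r)$ and the pair $\{B(r),\, B(74r) \smallsetminus B(2r)\}$ is $K$-disjoint; and (c) $2r + m \leq 74r$, so that $B(2r)M \subset B(74r)$. Since $k$ and $m$ do not depend on $r$, all three conditions hold for $r$ sufficiently large. I would then define the candidate egg marker collection as
\[
    \mathcal{E} := \{\, q \in L_{B(74r)}(X) : q|_{B(74r) \smallsetminus B(2r)} = d \,\}.
\]

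Verifying the egg marker axioms then splits into three checks. The common marker condition holds by construction. For realization of $L_{B(r)}(X)$, given any $p \in L_{B(r)}(X)$, condition (b) combined with~\Cref{lem:K-irreducible-mainlemma} produces $x \in X$ with $x|_{B(r)} = p$ and $x|_{B(74r) \smallsetminus B(2r)} = d$, so $x|_{B(74r)} \in \mathcal{E}$ and restricts to $p$ on $B(r)$. For pairwise exchangeability, suppose $x \in X$ and $q, q' \in \mathcal{E}$ satisfy $z := x|_{G \smallsetminus B(74r)} \vee q \in X$, and pick any $y \in X$ extending $q'$ on $B(74r)$. Then $z$ and $y$ both equal $d$ on $B(74r) \smallsetminus B(2r)$, which by condition (c) contains $B(2r)M \smallsetminus B(2r)$. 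Applying the strong TMP with $F = B(2r)$ yields $y|_{B(2r)} \vee z|_{G \smallsetminus B(2r)} \in X$, and a direct inspection on each of the three regions $B(2r)$, $B(74r)\smallsetminus B(2r)$ and $G\smallsetminus B(74r)$ shows this configuration equals $x|_{G\smallsetminus B(74r)} \vee q'$, as required.

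The main obstacle is essentially bookkeeping: the three scales $r,\, 2r,\, 74r$ have to accommodate both the strong irreducibility constant $k$ and the strong TMP constant $m$ with room to spare, so that strong irreducibility can freely populate the yolk $B(r)$ independently of the shell, while the strong TMP permits swapping yolks without disturbing the rest of a configuration. Since $k$ and $m$ are fixed while the three scales grow linearly in $r$, all inclusions eventually hold, and the outer radius $74r$ is exactly what~\Cref{thm:markers} produces for the shell.
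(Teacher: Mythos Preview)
Your proof is correct and follows essentially the same approach as the paper's: define $\mathcal{E}$ as all extensions of a fixed $(B(2r),B(74r))$-marker to $B(74r)$, use strong irreducibility across the $K$-disjoint pair $\{B(r),\,B(74r)\smallsetminus B(2r)\}$ to realize $L_{B(r)}(X)$, and use the strong TMP with $F=B(2r)$ to swap yolks. The only cosmetic difference is that the paper consolidates your conditions (b) and (c) into the single requirement $K\cup M\subset B(r)$.
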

\begin{proof}
Let $K$ be a strong irreducibility constant and $M$ be a memory constant associated to $X$. Take $r$ be large enough such that $K \cup M \subset B(r)$, and such that $X$ admits a $(B(2r),B(74r))$-marker $p_r$. This is possible by \Cref{thm:markers}. Then we define $\mathcal E_r$ as 
\[\mathcal{E}_r=\{q\in L_{B(74r)} : q|_{B(74r)\smallsetminus B(2r)} = p_r\}.\]
We shall prove that $\mathcal E_r$ is a collection of $(B(2r),B(74r))$-egg markers for $X$ that realizes $L_{B(r)}(X)$. The $(B(2r),B(74r))$-marker condition is obvious from the definition. Since $K\subset B(r)$ we have $B(r)K \cap (B(74r)\smallsetminus B(2r)) = \varnothing$, from where it follows that for every $p\in L_{B(r)}(X)$ there is $q\in \mathcal E_r$ with $p\sqsubset q$. Thus $\mathcal E_r$ realizes $L_{B(r)}(X)$.

Finally, we show that $\mathcal{E}_r$ is pairwise exchangeable. Let $q,q'\in \mathcal{E}_r$ and take $x \in [q]$, $y \in [q']$. By definition of $\mathcal{E}_r$, we have $x|_{B(74r)\setminus B(2r)}=y|_{B(74r)\setminus B(2r)}$. Since $X$ has the strong TMP with constant $M\subset B(r)$, it follows that if $x,y\in X$ are such that $x|_{B(3r)\setminus B(2r)}=y|_{B(3r)\setminus B(2r)}$, the element $ z= y|_{B(2r)}\lor x|_{G\smallsetminus B(2r)}$ lies in $X$. As $x = x|_{G\smallsetminus B(74r)} \vee q$ and $z = x|_{G\smallsetminus B(74r)} \vee q'$, it follows that $\mathcal{E}_r$ is pairwise exchangeable.\end{proof}
We finish the section stating a broad generalization of Ward's result  \cite{Ward1994}.

\begin{proposition}\label{prop:generalized-wards-result}
Let $G$ be an infinite group, and let $X$ be an infinite $G$-subshift that admits complete egg markers. Then every finite group embeds into $\Aut(X)$. 
\end{proposition}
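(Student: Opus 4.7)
The plan is to build an embedding of any finite group $H$ into $\Aut(X)$ by routing it through a symmetric group on a sufficiently rich collection of egg markers. By Cayley's theorem, it suffices to embed $\operatorname{Sym}(n)$ into $\Aut(X)$ for arbitrarily large $n$, since every finite group $H$ embeds into $\operatorname{Sym}(|H|)$.

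First I would verify that since $X$ is infinite, for every $n$ there exists $F \Subset G$ with $|L_F(X)| \geq n$. Indeed, if we had $|L_F(X)| \leq N$ for a fixed $N$ and all $F \Subset G$, then any collection $x_1,\dots,x_{N+1}$ of distinct points of $X$ could be separated by a common finite $F$ (since they are pointwise distinct in $A^G$), forcing $|L_F(X)| \geq N+1$, a contradiction. Then, using the hypothesis that $X$ admits complete egg markers, I would obtain for this $F$ a collection $\mathcal{E}$ of $(Y,W)$-egg markers realizing $L_F(X)$; since the restriction $\mathcal{E}\to L_F(X)$ is surjective, $|\mathcal{E}| \geq n$, and this immediately yields an injective homomorphism $\operatorname{Sym}(n) \hookrightarrow \operatorname{Sym}(\mathcal{E})$.

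Next I would construct a monomorphism $\operatorname{Sym}(\mathcal{E}) \hookrightarrow \Aut_{\mathcal{E}}(\eta_{\mathcal{E}}(X))$ by sending $\sigma\in\operatorname{Sym}(\mathcal{E})$ to the map $\mu_\sigma$ acting pointwise by $\sigma$ on non-star coordinates (as in the proof of \Cref{cor:automorfismo_reemplazos_bien_definido}). That $\mu_\sigma$ is well defined as a self-map of $\eta_{\mathcal{E}}(X)$ is exactly the content of \Cref{lem:egg_exchanges_well_defined}; continuity, $G$-equivariance, and fixing of stars are immediate from the pointwise definition, so $\mu_\sigma\in\Aut_{\mathcal{E}}(\eta_{\mathcal{E}}(X))$. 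The identity $\mu_{\sigma\tau}=\mu_\sigma\circ\mu_\tau$ is also pointwise. Injectivity holds because every $q\in\mathcal{E}$ lies in $L_W(X)$, so $q$ occurs as a value of $\eta_{\mathcal{E}}(x)$ for some $x\in X$, and hence $\sigma$ can be read off from $\mu_\sigma$.

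Finally, composing with the monomorphism $\Psi_{\mathcal{E}}\colon \Aut_{\mathcal{E}}(\eta_{\mathcal{E}}(X))\hookrightarrow \Aut(X)$ of \Cref{lem:lema_maestro_de_los_reemplzos} yields the chain
\[
H \hookrightarrow \operatorname{Sym}(|H|) \hookrightarrow \operatorname{Sym}(\mathcal{E}) \hookrightarrow \Aut_{\mathcal{E}}(\eta_{\mathcal{E}}(X)) \hookrightarrow \Aut(X),
\]
completing the embedding. There is no real obstacle: all the technical content has been absorbed into the existence of complete egg markers and into \Cref{lem:lema_maestro_de_los_reemplzos}. The only delicate point is the elementary observation that infiniteness of $X$ forces $|L_F(X)|$ to be unbounded as $F$ varies, which is what lets us accommodate $\operatorname{Sym}(n)$ for arbitrarily large $n$.
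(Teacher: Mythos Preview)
Your proposal is correct and follows essentially the same route as the paper: find $F$ with $|L_F(X)|\geq n$, take a collection $\mathcal{E}$ of egg markers realizing $L_F(X)$, and embed $\operatorname{Sym}(\mathcal{E})$ into $\Aut(X)$. The paper packages your last two steps (the pointwise $\mu_\sigma$ and the application of \Cref{lem:lema_maestro_de_los_reemplzos}) into a single citation of \Cref{cor:automorfismo_reemplazos_bien_definido}, whereas you unpack that corollary and add the explicit injectivity check; otherwise the arguments coincide.
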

\begin{proof}
    Let $n\in\N$ be arbitrary. Since $X$ is infinite we have $|L_F(X)|\geq n$ for some $F\Subset G$. Then \Cref{lem:existence_egg_markers_general} shows that there is a collection $\mathcal E$ of egg markers that realizes $L_F(X)$, and~\Cref{cor:automorfismo_reemplazos_bien_definido} shows that $\operatorname{Sym}(\mathcal{E})$ embeds into $\Aut(X)$. Since $|\mathcal E|\geq n$, we have proved that the symmetric group on $n$ elements embeds into $\Aut(X)$ for every $n\in\N$.
\end{proof}

As a consequence off~\Cref{lem:existence_egg_markers_general} and~\Cref{prop:generalized-wards-result} we get the following general result.

\begin{corollary}\label{cor:generalized_ward}
    Let $G$ be an infinite group and  $X$ be a non-trivial strongly irreducible $G$-subshift. Then every finite group embeds into $\Aut(X)$.
\end{corollary}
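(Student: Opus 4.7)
The statement follows essentially by directly combining the two preceding results, \Cref{lem:existence_egg_markers_general} and \Cref{prop:generalized-wards-result}. My plan is therefore simply to chain them together, with the only verification required being that $X$ is infinite (since \Cref{prop:generalized-wards-result} assumes infiniteness but the corollary's hypothesis is merely non-triviality).

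First I would apply \Cref{lem:existence_egg_markers_general} to the nontrivial strongly irreducible $G$-subshift $X$ on the infinite group $G$, obtaining that $X$ admits complete egg markers. Next I would verify that $X$ is infinite: since $X$ is nontrivial we have $|L_{\{\Id_G\}}(X)|\geq 2$, and $G$ is infinite, so by \Cref{prop:lenguaje-de-un-subshit-SI-tiene-muchos-elementos} applied with a strong irreducibility constant $K$, the cardinality $|L_F(X)|$ grows as $2^{|F|/2|K|}$ as $F$ exhausts $G$, forcing $|X|=\infty$. Having verified both hypotheses, I would invoke \Cref{prop:generalized-wards-result} to conclude that every finite group embeds into $\Aut(X)$.

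There is no real obstacle here since the heavy lifting has already been done in building egg markers on arbitrary infinite groups (\Cref{thm:markers} and \Cref{lem:existence_egg_markers_general}) and in extracting automorphisms from permutations of such collections (\Cref{cor:automorfismo_reemplazos_bien_definido}, which underlies \Cref{prop:generalized-wards-result}). The corollary thus serves mainly to package these tools as a strengthening of Ward's theorem that is valid on \emph{any} infinite group (in particular, also on infinite locally finite groups where the embedding results \Cref{thm:EmbeddingZ} and \Cref{thm:EmbeddingF2} do not apply).
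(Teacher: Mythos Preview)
Your proposal is correct and matches the paper's approach exactly: the paper presents \Cref{cor:generalized_ward} as an immediate consequence of \Cref{lem:existence_egg_markers_general} and \Cref{prop:generalized-wards-result}. Your explicit verification that $X$ is infinite (needed for \Cref{prop:generalized-wards-result}) via \Cref{prop:lenguaje-de-un-subshit-SI-tiene-muchos-elementos} is a detail the paper leaves implicit, so your write-up is slightly more complete.
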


\section{Ryan's Theorem}\label{sec:ryan}
Recall that for $g \in G$, we denote by $\tau_g \in \Aut(A^{G})$ the right shift automorphism, that is, $\tau_g(x)(h) = x(hg)$ for every $h \in G$. 

For the next result, let us recall that we denote by $Z(G)= \{ x \in G: gx = xg \mbox{ for every } g \in G\}$ the center of $G$ and by $\operatorname{Fix}(X) = \{ g \in G : gx = x \mbox{ for every } x \in X\}$ its pointwise stabilizer. Notice that $\operatorname{Fix}(X)$ is a normal subgroup of $G$ and that $G/\operatorname{Fix}(X)$ acts faithfully on $X$. For $\gamma \in G/\operatorname{Fix}(X)$ and $x \in X$ we write $\gamma x$ to mean the action $hx$ of any $h \in G$ which belongs to the coset $\gamma=h\operatorname{Fix}(X)$.


\begin{proposition}\label{prop:egg-markers-imply-ryan}
    Let $G$ be an infinite group and let $X\subset A^G$ be a nonempty subshift which admits complete egg markers. Then $Z(\Aut(X))$ is equal to $\langle \tau_g : g\fix(X)\in Z(G/\fix(X))\rangle$ and it is isomorphic to $Z(G/\fix(X))$. 
\end{proposition}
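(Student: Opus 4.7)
The plan is to close the gap $Z(\Aut(X)) \subseteq \langle\tau_g : g\fix(X)\in Z(G/\fix(X))\rangle$, since the reverse inclusion and the isomorphism with $Z(G/\fix(X))$ are already handled by \Cref{center-of-G-embeds-in-Aut(G)}. Fix $\varphi\in Z(\Aut(X))$; the goal is to produce $h\in G$ with $h\fix(X)\in Z(G/\fix(X))$ such that $\varphi=\tau_{h^{-1}}$ on $X$.

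First, by the CHL theorem I fix a local rule $\Phi\colon L_F(X)\to A$ defining $\varphi$ on some window $F\Subset G$. Using that $X$ admits complete egg markers, I choose a collection $\mathcal{E}$ of $(Y,W)$-egg markers realising $L_F(X)$ with $F\subset Y$ and $W$ large enough that both $\varphi(x)|_Y$ and $\varphi^{-1}(x)|_Y$ depend only on $x|_W$ when $x|_W\in\mathcal{E}$. The key tool is the commutation $\varphi\circ\phi_\sigma=\phi_\sigma\circ\varphi$ for every $\sigma\in\operatorname{Sym}(\mathcal{E})$, where $\phi_\sigma$ is the automorphism produced by \Cref{cor:automorfismo_reemplazos_bien_definido}.

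Next I specialise to a configuration $x\in X$ with a unique $\mathcal{E}$-egg marker at $\Id_G$ of label $q$ (such $x$ exists by strong irreducibility combined with the pairwise exchangeability condition, which lets us place other egg markers far away). Since $\phi_\sigma(x)$ differs from $x$ only on $Y$, CHL forces $\varphi(\phi_\sigma(x))$ to differ from $\varphi(x)$ only within some bounded region $V$; the commutation makes the same true of $\phi_\sigma(\varphi(x))$, so $\varphi(x)$ must carry $\mathcal{E}$-egg markers inside $V$. Running the identical argument for $\varphi^{-1}$ together with varying $\sigma$ over all transpositions of $\operatorname{Sym}(\mathcal{E})$ then yields: $\varphi(x)$ contains exactly one such egg marker, located at some $h\in G$, whose label is $q$ itself; moreover, comparing both sides of $\phi_{(q_1,q_2)}\circ\varphi(x)=\varphi\circ\phi_{(q_1,q_2)}(x)$ for arbitrary $q_1,q_2\in\mathcal{E}$ shows that the position $h$ does not depend on the label $q$. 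By $G$-equivariance, $\varphi$ then sends the egg marker at any position $g$ with any label $q$ to an egg marker at $gh$ with the same label.

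Finally, the identification $\varphi(x)(hw)=q(w)=x(w)$ for $w\in W$ shows $\varphi(x)=hx$ on $hW$. Because $\mathcal{E}$ realises $L_F(X)$, every pattern in the domain of $\Phi$ arises as $q|_F$ for some $q\in\mathcal{E}$, which pins down $\Phi$ as evaluation at $h^{-1}$ (after possibly enlarging $F$ to contain $h^{-1}$), so $\varphi(x)=hx$ for all $x\in X$. The $G$-equivariance of $\varphi$ gives $h(sx)=s(hx)$ for every $s\in G$ and $x\in X$, which unwinds to $[s,h]\in\fix(X)$ for all $s$, i.e. $h\fix(X)\in Z(G/\fix(X))$. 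Under this centrality $hx=\tau_{h^{-1}}(x)$ on $X$, and hence $\varphi=\tau_{h^{-1}}$ belongs to the desired subgroup. The main obstacle is the middle step: extracting from the commutation with transpositions that the induced map on labels is the identity and that the shift $h$ is independent of $q$, which requires juggling data from $\varphi$, $\varphi^{-1}$, and many permutations $\sigma$ simultaneously.
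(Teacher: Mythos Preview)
Your overall strategy---use the commutation $\varphi\circ\phi_\sigma=\phi_\sigma\circ\varphi$ for all $\sigma\in\operatorname{Sym}(\mathcal{E})$ to pin down $\varphi$ as a shift---is exactly right and matches the paper. However, there is a genuine gap in the execution: you invoke strong irreducibility to produce a configuration with a ``unique $\mathcal{E}$-egg marker at $\Id_G$'', but strong irreducibility is \emph{not} a hypothesis of this proposition (only the existence of complete egg markers is assumed). Even granting strong irreducibility, the existence of a configuration with a \emph{globally} unique egg marker is not at all clear---strong irreducibility lets you place patterns, not avoid them everywhere---and your parenthetical about placing other markers ``far away'' does not resolve this. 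You also do not address the degenerate cases $|X|\leq 2$, where the argument via $Z(\operatorname{Sym}(\mathcal{E}))$ breaks down.

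The paper sidesteps all of this by never specialising to a particular configuration. It argues directly that for every $q\in\mathcal{E}$ and \emph{every} $x\in[q]\cap X$, there is some $g\in Y$ with $(g^{-1}\varphi(x))|_W\in\mathcal{E}$: if not, then for the bad $x$ one has $\phi_\sigma(\varphi(x))(\Id_G)=\varphi(x)(\Id_G)$ while $\varphi(\phi_\sigma(x))(\Id_G)=\mu(\sigma(q)|_F)$, and since $\mathcal{E}$ realises $L_F(X)$ this forces $\mu$ to be constant, contradicting $|X|\geq 3$. The marker condition then shows this $g$ is unique modulo $\fix(X)$, yielding well-defined maps $s\colon\mathcal{E}\to G/\fix(X)$ and $\pi\colon\mathcal{E}\to\mathcal{E}$. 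Your ``main obstacle'' is dispatched cleanly: $\pi$ commutes with every $\sigma\in\operatorname{Sym}(\mathcal{E})$, hence lies in $Z(\operatorname{Sym}(\mathcal{E}))=\{\operatorname{Id}\}$ (using $|\mathcal{E}|\geq 3$); and constancy of $s$ follows by applying a transposition $(p\;q)$ and using the marker property once more. This avoids both the unwarranted hypothesis and the need to control global occurrences of egg markers.
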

\begin{proof}
    By \Cref{center-of-G-embeds-in-Aut(G)} it suffices to prove that every element in $Z(\Aut(X))$ is equal to $\tau_g$ for some $g \in G$ with $g\fix(X)\in Z(G/\fix(X))$. If $|X|=1$ the result holds trivially. If $|X|=2$, then $X$ consists either of an orbit of size two, in which case the conclusion holds; or $X$ consists on two fixed points, in which case $X$ does not admit complete egg markers. In what follows we will assume that $|X|\geq 3$.

    Let $\varphi \in Z(\Aut(X))$. By the Curtis-Lyndon-Hedlund theorem there exists $F\Subset G$ and a local rule $\mu \colon A^{F}\to A$ such that $\varphi(x)(g)= \mu( (g^{-1} x)|_{F})$ for every $x\in X$ and $g\in G$. We enlarge the set $F$ if necessary to ensure that $|L_F(X)|\geq 3$, and take a collection $\mathcal E$ of $(Y,W)$-egg markers which realizes $L_F(X)$. Without loss of generality we assume that $Y$ is symmetric.
    
    By~\Cref{cor:automorfismo_reemplazos_bien_definido} it follows that for every $\sigma \in \operatorname{Sym}(\mathcal{E})$ the map $\phi_{\sigma}$ which exchanges patterns from $\mathcal{E}$ according to $\sigma$ is a well-defined automorphism of $X$.

    We first claim that for every $q \in \mathcal{E}$, there exists $g \in Y$ such that $(g\varphi(x))|_{W} \in \mathcal{E}$ for every $x \in [q]\cap X$. Indeed, suppose this property does not hold for some $q \in \mathcal{E}$ and $x \in X$. It follows that for every $\sigma\in\operatorname{Sym}(\mathcal{E})$ we have
     \[ \bigl(\phi_{\sigma} \circ \varphi  (x)\bigr)(\Id_G) = \varphi(x)(\Id_G). \]
    This is simply by definition of $\phi_\sigma$. On the other hand, if we choose $\sigma$ as the transposition which exchanges $q$ with some arbitrary $q'\in \mathcal{E}$, we have  \[\bigl( \varphi \circ \phi_{\sigma}(x)\bigr)(\Id_G) = \mu(q'|_{F}). \]
    Since $\varphi \in Z(\Aut(X))$, we have $\varphi\circ \phi_\sigma (x)=\phi_\sigma\circ\varphi(x)$ and thus $\varphi(x)(\Id_G)=\mu(q'|_F)$ for every $q'\in \mathcal{E}$. This implies that $\mu|_{L_F(X)}$ is constant:  $q'\in \mathcal{E}$ is arbitrary, and, as $\mathcal{E}$ is complete, every element in $L_F(X)$ arises as the restriction of some element from $\mathcal E$ to $F$. This implies that $\varphi$ is also a constant function, and since it is an automorphism, this can only happen if $|X|=1$. This contradicts our assumption that $|X|\geq 3$.

    Now that the first claim is settled, we remark that the following holds for every  $q \in \mathcal{E}$: if there are $g,g'\in Y$ and $p,p'\in \mathcal{E}$ such that for every $x \in [q]\cap X$ we have $(g\varphi(x))\in [p]\cap X$ and $(g'\varphi(x)) \in [p']$, then necessarily $g'g^{-1}\in \operatorname{Fix}(X)$ and $p=p'$.  Indeed, $(g\varphi(x))\in [p]$ and $(g'\varphi(x)) \in [p']$ imply that $d=p|_{W\smallsetminus Y}$ is $g'g^{-1}$-overlapping. Since $d$ is a marker and $g'g^{-1}\in YY^{-1}\subset WY^{-1}$, it follows that $g'g^{-1}\in\fix(X)$. Since $g'g^{-1}$ acts trivially we have $g\varphi(x)=g'\varphi(x)$, and joining this with  our first assumption  $(g\varphi(x))\in [p]$ and $(g'\varphi(x)) \in [p']$ we obtain that $p=p'$.

    From the previous remark, it follows that there exists maps $s\colon \mathcal{E} \to G/\operatorname{Fix}(X)$ and $\pi \colon \mathcal{E} \to \mathcal{E}$ such that for every $x \in [q]$ we have $s(q)\varphi(x)\in [\pi(q)]$ and $s(q)$ is represented by some element in  $Y$. 

    We claim now that $\pi$ is the identity map. First, it is clear that $\pi$ is a permutation: the same arguments above can be applied to $\varphi^{-1}$ and the associated map must necessarily be $\pi^{-1}$. Fix $\sigma \in \operatorname{Sym}(\mathcal{E})$, $q \in \mathcal{E}$ and let $x \in [q]$. Notice that the only pattern from $\mathcal{E}$ which appears in $\phi_{\sigma}\circ \varphi(x)$ in $Y$ is $\sigma(\pi(q))$, whereas  the only pattern from $\mathcal{E}$ which appears in $\phi_{\sigma}\circ \varphi(x)$ in $Y$ is $\pi(\sigma(q))$. As $\phi_{\sigma}\circ \varphi = \varphi \circ \phi_{\sigma}$, we conclude that $\pi\circ \sigma = \sigma \circ \pi$, and thus, as $\sigma$ is arbitrary, we obtain that $\pi \in Z(\operatorname{Sym}(\mathcal{E}))$. Let us recall that the center of the symmetric group on $n$ elements contains only the trivial permutation for $n\geq 3$. Since $|\mathcal{E}|\geq 3$, it follows that $\pi = \operatorname{Id}_{\mathcal{E}}$ as claimed.

    We now argue that $s$ is a constant map. Let $p, q \in \mathcal{E}$ with $p\ne q$ and let $\sigma$ be the transposition that exchanges them. There are $g,h\in G$ such that $g,h\in Y$, $g$ represents $s(p)$ and $h$ represents $s(q)$ in the quotient $G/\fix(X)$. Now, for any $x \in [p]$ we have that $g\phi_{\sigma}(\varphi(x)) \in [q]$ and $h\varphi(\phi_{\sigma}(x)) \in [q]$. It follows that $q$ is $gh^{-1}$-overlapping, and thus $d=q|_{W\smallsetminus Y}$ is also is $gh^{-1}$-overlapping (see \Cref{monotony-of-overlappings}). Since $q$ is a $(Y,W)$-marker and $gh^{-1}\in WY^{-1}$, it follows that $gh^{-1}\in \fix(X)$. This proves that $s(q)=s(p)$.

    Let $g \in G$ be a representative of the value taken by the constant function $s$. Thus for every $q \in \mathcal{E}$ and $x \in [q]$, we have $g\varphi(x)\in [q]$. As $\varphi$ only depends locally on $F$ and $\{q|_{F} : q \in \mathcal{E}\} = L_F(X)$, it follows that $g\varphi(x) = x$ for every $x\in X$ and thus $\varphi(x) = g^{-1}x$ for every $x \in X$. This map can only be an automorphism when $g\fix(X) \in Z(G/\operatorname{Fix}(X))$ and in such case it coincides with $\tau_g$. 
    \end{proof}

    Now we are ready to prove~\Cref{thm:Ryan}.

    \begin{proof}[Proof of ~\Cref{thm:Ryan}]
    Let $G$ be an infinite group, and let $X$ be a nonempty strongly irreducible subshift. \Cref{lem:existence_egg_markers_general} implies that $X$ admits complete egg markers, thus the result follow from~\Cref{prop:egg-markers-imply-ryan}.\end{proof}

    We end this section by showing that Ryan's theorem cannot be extended to finite groups keeping the statement as it is.
    
    \begin{example}\label{ex:finite_ryan}
        Let $G$ be a finite group. Consider the subshift $X\subset \{\mathtt{0},\mathtt{1}\}^G$ given by \[ X = \{ \mathtt{0}^G,\mathtt{1}^G\} \cup \{ x \in \{\mathtt{0},\mathtt{1}\}^G : |x^{-1}(\mathtt{1})| =1 \}.  \]
        In other words, $X$ contains the two uniform configurations, and all configurations with a single $\mathtt{1}$. Note that in a finite group, every subshift is strongly irreducible (with $K=G$) and of finite type.

        It is not hard to see that $\Aut(X)$ is generated by the right shifts $\tau_g$ with $g \in G$ and the involution that exchanges $\mathtt{0}^G$ and $\mathtt{1}^G$. It follows that $\Aut(X) \cong G \times \ZZ/2\ZZ$ and thus $Z(\Aut(X)) \cong Z(G) \times \ZZ/2\ZZ$.
    \end{example}

    \subsection{Applications of Ryan's theorem}

    An immediate consequence of~\Cref{thm:Ryan} is the following

    \begin{corollary}\label{cor:hochmananswer}
        Let $G,H$ be two infinite groups and let $X\subset A^{G}$ and $Y\subset B^H$ be faithful strongly irreducible subshifts. If  $\Aut(X) \cong \Aut(Y)$ then $Z(G) \cong Z(H)$. In particular if $\min(|A|,|B|)\geq 2$ and $\Aut(A^G) \cong \Aut(B^H)$, then $Z(G)\cong Z(H)$.
    \end{corollary}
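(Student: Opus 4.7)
The proof should be a short direct deduction from~\Cref{thm:Ryan}. The plan is as follows.

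First, I would unpack what faithfulness means: $X \subset A^G$ being faithful amounts to $\fix(X) = \{\Id_G\}$, so $G/\fix(X) \cong G$ and hence $Z(G/\fix(X)) \cong Z(G)$. Applying~\Cref{thm:Ryan} to the nonempty strongly irreducible subshift $X$ then yields
\[
Z(\Aut(X)) \cong Z(G/\fix(X)) \cong Z(G),
\]
and the same argument applied to $Y$ gives $Z(\Aut(Y)) \cong Z(H)$. Any isomorphism $\Aut(X) \cong \Aut(Y)$ restricts to an isomorphism between the centers, so we conclude $Z(G) \cong Z(H)$. This handles the main statement.

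For the ``in particular'' part, I need to verify the hypotheses of the first statement for the full shifts $A^G$ and $B^H$ when $\min(|A|,|B|) \geq 2$. The key observation is that any full shift $A^G$ is strongly irreducible with constant $K = \{\Id_G\}$: whenever $S \cap T = \varnothing$, any pair of patterns $p \in A^S, q \in A^T$ can be concatenated, since the full shift has no constraints. Moreover, since $|A| \geq 2$ the subshift $A^G$ is non-trivial, so~\Cref{lem:if-X-SI-then-Fix-contained-in-K} gives $\fix(A^G) \subset K = \{\Id_G\}$, i.e.\ the shift action is faithful. Thus $A^G$ and $B^H$ satisfy the hypotheses of the first statement, and the conclusion $Z(G) \cong Z(H)$ follows immediately.

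There is essentially no obstacle here; the entire content lies in~\Cref{thm:Ryan}, and the corollary is a two-line bookkeeping step combined with the verification that full shifts with nontrivial alphabet are faithful and strongly irreducible.
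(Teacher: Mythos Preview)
Your proposal is correct and matches the paper's approach: the paper states this corollary as ``an immediate consequence of~\Cref{thm:Ryan}'' without giving a separate proof, and your unpacking (faithfulness gives $\fix(X)=\{\Id_G\}$, apply~\Cref{thm:Ryan}, then pass to centers under an isomorphism, and finally check that full shifts on at least two symbols are strongly irreducible with constant $\{\Id_G\}$ and faithful) is exactly the intended short deduction.
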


    Which answers a question of Hochman~\cite{Hochquestion}, namely, if $m,n$ are two distinct positive integers then $\Aut(\{0,1\}^{\ZZ^m})$ and $\Aut(\{0,1\}^{\ZZ^{n}})$ are non-isomorphic 

    The most famous application of Ryan's theorem is a technique to show that two full $\ZZ$-shifts whose alphabets satisfy an algebraic relation are not isomorphic. Particularly, that $\Aut(\{0,1\}^{\ZZ})$ and $\Aut(\{0,1,2,3\}^{\ZZ})$ are non-isomorphic. We generalize this result to a large class of groups.

\begin{theorem}[\Cref{thm:roots_of_shift_ingroups}]
    Let $G$ be an infinite group and suppose there exists an epimorphism $\psi\colon G \to \ZZ$ such that $\psi(Z(G))= \ZZ$. For every integer $n\geq 2$ and positive integers $k,\ell$ we have \[ \Aut( \{1,\dots,n^k\}^G) \cong \Aut( \{1,\dots,n^{\ell}\}^G) \mbox{ if and only if } k=\ell.   \]
\end{theorem}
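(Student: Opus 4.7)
The ``if'' direction is trivial. For the converse, suppose $\Phi\colon \Aut(A^G) \to \Aut(B^G)$ is an isomorphism, where $A = \{1,\dots,n^k\}$ and $B = \{1,\dots,n^\ell\}$. My plan is to show that, for \emph{every} $m \geq 1$, whenever $n^\ell$ is an $m$-th power then so is $n^k$, which is equivalent to $\ell \mid k$; then applying the same argument to $\Phi^{-1}$ gives $k \mid \ell$, and $k = \ell$ follows.

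The first step is to apply \Cref{thm:Ryan} to the full shifts $A^G$ and $B^G$, which are nontrivial strongly irreducible subshifts whose shift actions are faithful since $|A|,|B|\geq 2$. This yields $Z(\Aut(A^G)) = \{\tau_g^A : g \in Z(G)\} \cong Z(G)$ and likewise for $B^G$, so $\Phi$ restricts to an isomorphism of the centers, sending $\tau_g^A$ to $\tau_{\alpha(g)}^B$ for some $\alpha \in \Aut(Z(G))$. Using the hypothesis, I would pick $g_0 \in Z(G)$ with $\psi(g_0) = 1$.

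The heart of the argument is the following pair of observations, whose asymmetry is what drives the proof. On the \emph{upper bound} side, I would consider $X_A := \{x \in A^G : hx = x \text{ for every } h \in \ker\psi\}$. Since $\ker\psi$ is normal, $X_A$ is invariant under all of $\Aut(A^G)$, and a choice of coset section identifies $X_A$ $\ZZ$-equivariantly with $A^\ZZ$ in such a way that $\tau_{g_0}^A|_{X_A}$ corresponds to the standard shift $\sigma$ (this step crucially uses $\psi(g_0) = 1$). Restriction $\Psi \mapsto \Psi|_{X_A}$ then defines a group homomorphism $\rho_A\colon \Aut(A^G)\to \Aut(A^\ZZ)$ sending $\tau_{g_0}^A$ to $\sigma$, so if $\Psi^m = \tau_{g_0}^A$ in $\Aut(A^G)$ then $(\rho_A\Psi)^m = \sigma$ in $\Aut(A^\ZZ)$, and Lind's Theorem~8~\cite{Lind1984} forces $|A| = n^k$ to be an $m$-th power. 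On the \emph{lower bound} side, applicable uniformly to \emph{any} $g \in G$, if $|A|=c^m$ I would identify $A \cong A_0^m$ with $|A_0|=c$, so that $A^G \cong A_0^{G\times\{0,\dots,m-1\}}$, and define $\varphi\in\Aut(A^G)$ by the ``cyclic shift with wrap'' rule
\[
\varphi(y)(h,j) = \begin{cases} y(h,j+1) & \text{if } 0 \leq j < m-1, \\ y(hg,0) & \text{if } j = m-1, \end{cases}
\]
a straightforward check showing that $\varphi$ is $G$-equivariant (because associativity makes $(g')^{-1}hg$ unambiguous) and that $\varphi^m = \tau_g^A$. The same construction in $B^G$ gives an $m$-th root of $\tau_g^B$ whenever $n^\ell$ is an $m$-th power, for any $g \in G$.

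Combining these with the fact that $\Phi$ bijects $m$-th roots of $\tau_{g_0}^A$ with $m$-th roots of $\tau_{\alpha(g_0)}^B$, we get: $n^k$ is an $m$-th power $\iff \tau_{g_0}^A$ has an $m$-th root $\iff \tau_{\alpha(g_0)}^B$ has an $m$-th root, and the construction applied to $B^G$ with $g=\alpha(g_0)$ shows this last condition is implied by $n^\ell$ being an $m$-th power. Hence $n^\ell$ being an $m$-th power implies $n^k$ is an $m$-th power for every $m \geq 1$, which is equivalent to $\ell \mid k$. The symmetric argument applied to $\Phi^{-1}$ with $A$ and $B$ exchanged yields $k \mid \ell$, forcing $k = \ell$. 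The conceptual subtlety here is the asymmetry just highlighted: the Lind-type upper bound only controls $\tau_{g_0}^A$ (because we need $\psi(g_0) = 1$ to identify the restriction with $\sigma$), whereas the cyclic-shift lower bound works uniformly for every $g \in G$, which is exactly what lets us conclude without having any control over the element $\alpha(g_0) \in Z(G)$.
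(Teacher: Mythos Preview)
Your proof is correct and follows essentially the paper's approach: both combine \Cref{thm:Ryan}, the slow-shift construction (the paper's \Cref{lemma:roots}, your ``cyclic shift with wrap'') to produce $m$-th roots of $\tau_g$ for arbitrary $g$, and Lind's argument via restriction to the $\ker\psi$-fixed subshift $X_A\cong A^{\ZZ}$ (the paper's \Cref{lem:algebraicroots}) to bound which $m$-th roots of $\tau_{g_0}$ can exist. The only difference is organizational: the paper packages everything into the invariant $\mathcal{R}(X)=\{m:\text{every }\tau_g,\ g\in Z(G),\text{ has an }m\text{-th root in }\Aut(X)\}$, proves $\mathcal{R}(A^G)=\{m:|A|\text{ is an }m\text{-th power}\}$, and concludes directly from $\mathcal{R}$ being an isomorphism invariant of $\Aut(X)$, so it never needs to track where the isomorphism sends a particular $\tau_{g_0}$; the asymmetry you carefully highlight is precisely what the universal quantifier in the definition of $\mathcal{R}$ absorbs.
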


As in the classical setting of $\ZZ$, the proof of~\Cref{thm:roots_of_shift_ingroups} relies on understanding the possible roots of elements in the center of the automorphism group. As usual, for $g\in Z(G)$ we let $\tau_g\in \aut(X)$ be the right shift by $g$. Let $G$ be a group and $X\subset A^{G}$ be a subshift.  We define \[ \mathcal{R}(X) = \{ k \geq 1 : \mbox{ for every } g \in Z(G) \mbox{ there is } \phi \in \Aut(X) \mbox{ such that } \phi^k = \tau_g\}.   \]

By~\Cref{thm:Ryan}, whenever $G$ is infinite and $X$ is a non-trivial strongly irreducible faithful subshift, then $\mathcal{R}(X)$ is the set of all possible common indices of roots of $Z(\Aut(X))$. If $X$ and $Y$ are two subshifts as above and $\Aut(X)$ and $\Aut(Y)$ are isomorphic, then by~\Cref{cor:hochmananswer} we have $\mathcal{R}(X)=\mathcal{R}(Y)$. 

We first show that if $|A|=n^k$  for some $n \geq 2$ and $k \geq 1$, then $ k \in\mathcal{R}(A^G)$. This proof is very similar to the classical argument on $G=\ZZ$.

\begin{lemma}\label{lemma:roots}
    Let $G$ be an infinite group and $A$ be an alphabet of size $|A|=n^k$, for some integers $n\geq 2$ and $k \geq 1$. Then $k \in \mathcal{R}(A^G)$.
\end{lemma}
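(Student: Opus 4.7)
The plan is to exhibit, for every $g \in Z(G)$, an explicit $k$-th root of $\tau_g$ in $\Aut(A^G)$ via a local rule that cyclically advances "layers" and carries across blocks. Since $|A| = n^k$, fix a bijection $A \cong B^{\{0,1,\dots,k-1\}}$ with $|B| = n$, so each configuration $x \in A^G$ decomposes as $x(t) = \bigl(x(t)_0, x(t)_1, \dots, x(t)_{k-1}\bigr)$ with $x(t)_i \in B$. For $g \in Z(G)$, define $\phi_g\colon A^G \to A^G$ by
\[
\phi_g(x)(t)_i =
\begin{cases}
x(t)_{i+1} & \text{if } 0 \le i < k-1, \\
x(tg)_0 & \text{if } i = k-1.
\end{cases}
\]
Intuitively, $\phi_g$ rotates the layers within the block at $t$, and at the top layer pulls the bottom layer from the block at the right-translate $tg$.

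The first step is to verify that $\phi_g \in \Aut(A^G)$. Continuity is immediate from the CHL theorem (\Cref{thm:curtis_lindon_hedlund}) since the rule depends only on the values $x(t)$ and $x(tg)$. For $G$-equivariance, note that for $t_0, t \in G$ and $i < k-1$ we have $\phi_g(t_0 x)(t)_i = (t_0 x)(t)_{i+1} = x(t_0^{-1}t)_{i+1}$ and $(t_0\phi_g(x))(t)_i = \phi_g(x)(t_0^{-1}t)_i = x(t_0^{-1}t)_{i+1}$; for $i = k-1$ both sides equal $x(t_0^{-1} t g)_0$ by associativity. Invertibility is witnessed by the explicit two-sided inverse $\phi_g^{-1}(x)(t)_i = x(t)_{i-1}$ for $i > 0$ and $\phi_g^{-1}(x)(t)_0 = x(tg^{-1})_{k-1}$.

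The main step is to check by induction on $j \in \{1,\dots,k\}$ that
\[
\phi_g^{j}(x)(t) = \bigl(x(t)_j,\, x(t)_{j+1},\, \ldots,\, x(t)_{k-1},\, x(tg)_0,\, \ldots,\, x(tg)_{j-1}\bigr).
\]
The inductive step is a direct application of the defining formula: the rotation consumes one entry from $x(t)$, and the carry at position $k-1$ delivers one additional entry from $x(tg)$. Setting $j = k$ yields $\phi_g^k(x)(t) = (x(tg)_0, \dots, x(tg)_{k-1}) = x(tg) = \tau_g(x)(t)$, so $\phi_g^k = \tau_g$, and therefore $k \in \mathcal{R}(A^G)$.

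I do not anticipate a genuine obstacle: the whole argument is a direct local-rule construction, and the only delicate point is the index bookkeeping ensuring that the internal rotation and the external carry combine precisely into one right shift by $g$ after $k$ iterations. The identification $A \cong B^{\{0,\dots,k-1\}}$ is the unique place where the hypothesis $|A|=n^k$ is used; in fact the same construction gives a $k$-th root of $\tau_g$ for every $g \in G$, and the centrality of $g$ is not needed for the construction itself, only for $\tau_g$ to be the relevant target in the definition of $\mathcal{R}(X)$.
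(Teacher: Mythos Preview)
Your proof is correct and follows essentially the same ``slow shift'' construction as the paper: identify $A$ with $k$-tuples over an $n$-letter alphabet and cyclically rotate the layers, carrying one layer across to the neighbor at $tg$, so that $k$ iterations amount to a single right shift by $g$. The only differences are cosmetic (you index $0,\dots,k-1$ and carry at the top layer while the paper indexes $1,\dots,k$ and carries at the bottom), and you supply more detail on equivariance, invertibility, and the inductive verification of $\phi_g^k=\tau_g$ than the paper does.
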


\begin{proof}
    Without loss of generality, we can interpret the alphabet $A$ as ordered pairs of the form \[ A = \{ (a_1,\dots,a_k) : a_i \in \{1,\dots,n\} \mbox{ for every } i \in \{1,\dots,k\}\}.    \]
    Let $h \in Z(G)$. Consider the \define{slow shift} map $\phi_h \colon A^{G}\to A^G$ given by \[ \left(\phi_h(x)(g)\right)_i = \begin{cases}
        x(gh)_k & \mbox{ if } i = 1\\
        x(g)_{i-1} & \mbox{ if } 1 < i \leq k\\
    \end{cases} \mbox{ for every } g \in G. \]

    Clearly $\phi_h$ is continuous and $G$-equivariant. Furthermore, notice that for each $g\in G$ we have $ \left(\phi_h^k(x)(g)\right)_i = x(gh)_i$, thus we have $\phi_h^k = \tau_h$. From this we deduce that $\phi_h \in \Aut(A^G)$. Finally, as $h\in Z(G)$ is arbitrary, we conclude that $k \in \mathcal{R}(A^G)$.
\end{proof}

The rest of our proof is a generalization of the argument of~\cite[Theorem 8]{Lind1984} and is based on the fundamental result of Lind which characterizes the topological entropies of mixing $\ZZ$-SFTs as the class of logarithms of Perron numbers. Let us recall that a real number $\alpha>1$ is called \define{Perron}, if it is an algebraic integer which is strictly larger than the norm of its algebraic conjugates. The name comes from the fact that, as a consequence of the Perron-Frobenius theorem, these numbers are precisely the set of values that occur as the largest eigenvalue of primitive non-negative integer matrices. We refer the reader to~\cite{lind1995introduction} for further background on entropy theory and Perron numbers.

Besides the result of Lind, we will only need three basic facts about entropy theory for $\ZZ$-actions. For a compact metrizable space $X$ and an homeomorphism $T\colon X\to X$, we denote by $h_{\mbox{top}}(T)$ its topological entropy.
\begin{enumerate}
    \item Let $\sigma\colon A^{\ZZ}\to A^{\ZZ}$ be the shift map given by $\sigma(x)(n)=x(n-1)$ for every $n \in \ZZ$ and $x \in A^{\ZZ}$. The topological entropy of the shift map is given by $h_{\mbox{top}}(\sigma)=\log(|A|)$.
    \item The topological entropy is an invariant of topological conjugacy. 
    \item For every $n \geq 1$ we have $h_{\mbox{top}}(T^n)=n\cdot h_{\mbox{top}}(T)$.
\end{enumerate}

\begin{lemma}\label{lem:algebraicroots}
    Let $G$ be an infinite group and suppose there exists an epimorphism $\psi\colon G \to \ZZ$ such that $\psi(Z(G))= \ZZ$. We have that $k \in \mathcal{R}(A^G)$ if and only if $|A|$ is an $k$-th power.
\end{lemma}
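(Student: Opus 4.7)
The reverse implication is exactly \Cref{lemma:roots}, so the plan is to prove the forward direction by reducing to the classical setting $G=\ZZ$ and invoking Lind's result~\cite[Theorem 8]{Lind1984}, from which it follows that the full shift on $A^\ZZ$ admits a $k$-th root in its automorphism group only when $|A|$ is a $k$-th power.

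First I would use the hypothesis $\psi(Z(G))=\ZZ$ to pick some $g_0\in Z(G)$ with $\psi(g_0)=1$, and set $H:=\ker(\psi)$. The centrality of $g_0$ together with $\psi(g_0^n)=n$ yields $H\cap\langle g_0\rangle=\{\Id_G\}$ and $G=H\langle g_0\rangle$, so $G\cong H\times\ZZ$ with $g_0$ corresponding to the generator of the $\ZZ$-factor; in particular $\tau_{g_0}$ is the ``shift in the $\ZZ$-direction''.

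Assuming $k\in\mathcal{R}(A^G)$, fix $\phi\in\Aut(A^G)$ with $\phi^k=\tau_{g_0}$. The heart of the argument will be to pass to the closed subset $X_H:=\{x\in A^G : hx=x\text{ for all }h\in H\}$ of configurations that are constant on right cosets of $H$. Since $H$ is normal the set $X_H$ is $G$-invariant, and because both $\phi$ and $\phi^{-1}$ are $G$-equivariant they preserve $X_H$; hence $\phi|_{X_H}\in\Aut(X_H)$ satisfies $(\phi|_{X_H})^k=\tau_{g_0}|_{X_H}$. The map $\Theta\colon X_H\to A^\ZZ$ given by $\Theta(x)(n)=x(g_0^n)$ is a homeomorphism intertwining $\tau_{g_0}|_{X_H}$ with the standard shift on $A^\ZZ$, so $\tilde\phi:=\Theta\circ\phi|_{X_H}\circ\Theta^{-1}$ is an element of $\Aut(A^\ZZ)$ satisfying $\tilde\phi^k=\sigma$, and Lind's theorem then concludes that $|A|$ must be a $k$-th power.

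The hard part is really bundled into Lind's theorem, whose proof combines the entropy computation $h(\tilde\phi)=\tfrac{1}{k}\log|A|$ with his classification of entropies of mixing $\ZZ$-SFTs as logarithms of Perron numbers, forcing $|A|^{1/k}$ to be a rational integer. On our side, no difficulty is anticipated beyond checking that $X_H$ is nonempty and $\phi$-invariant (which is routine once the direct product decomposition $G\cong H\times\ZZ$ is in hand) and that $\Theta$ is genuinely a conjugacy.
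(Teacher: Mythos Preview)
Your proposal is correct and follows essentially the same route as the paper. Both arguments pick $g_0\in Z(G)$ with $\psi(g_0)=1$, restrict the given $k$-th root $\phi$ to the set of $\ker(\psi)$-fixed configurations, and transport it via the obvious bijection with $A^{\ZZ}$ to obtain a $k$-th root of the $\ZZ$-shift; the paper's map $\xi$ is exactly your $\Theta^{-1}$. The only cosmetic difference is that the paper unpacks Lind's argument by explicitly building a mixing $\ZZ$-SFT conjugate to $(A^{\ZZ},T)$ and then invoking Lind's classification of entropies, whereas you cite \cite[Theorem~8]{Lind1984} as a black box; also, your explicit splitting $G\cong H\times\ZZ$ is a nice clarification but is not actually needed for the argument (neither you nor the paper use more than the conjugacy $\Theta$).
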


\begin{proof}
    Fix a positive integer $k$. If $|A|$ is an $k$-th power, then $k \in \mathcal{R}(A^G)$ by~\Cref{lemma:roots}. Conversely, let $\psi\colon G \to \ZZ$ be an epimorphism such that $\psi(Z(G))= \ZZ$ and fix $g\in Z(G)$ such that $\psi(g)=1$. By our assumption, $k \in \mathcal{R}(A^G)$ and thus there is $\phi \in \Aut(A^G)$ such that $\phi^k = \tau_g$.

    Notice that we have an homeomorphism $\xi$ from $A^{\ZZ}$ to the set of $y \in A^G$ which are stabilized by $\operatorname{ker}(\psi)$. More precisely, this homeomorphism is given by \[ \xi(x)(h) = x(\psi(h)) \mbox{ for all } h \in G.  \]
    We consider the automorphism of $A^{\ZZ}$ given by $T = \xi^{-1}\circ \phi \circ \xi$. Notice that \[T^k = \xi^{-1}\circ \phi^k \circ \xi = \xi^{-1}\circ \tau_g 
    \circ \xi = \sigma.\]
    From where we deduce that $h_{\mbox{top}}(T) = \frac{1}{k}\log(|A|)$.

    Let $\iota_k\colon A^{\ZZ}\to A^{\ZZ}$ be given by $\iota_k(x)(n)=x(kn)$. Let $X\subset A^{\ZZ}$ be given by \[ X = \{x \in A^{\ZZ} : \iota_k(\sigma^{n+1}(x)) = T(\iota_k(\sigma^n(x))) \mbox{ for all }  n\in \{0,\dots,k-1\}\}.     \]
    
    As $T^k = \sigma$, it follows that the property which defines $X$ holds for all $n \in \ZZ$, and thus $X$ is a closed and $\ZZ$-invariant set. Furthermore, from the Curtis-Hedlund-Lyndon theorem applied to $T$, it follows that $X$ is a $\ZZ$-SFT. It is also immediate from the fact that $T^k = \sigma$ that $X$ is mixing.

    Finally, the map $\gamma \colon A^{\ZZ}\to X$ given by $\gamma(x)_{k} = T^{k \bmod{n}}(x)( \lfloor \frac{k}{n}\rfloor)$ is a topological conjugacy between $(A^{\ZZ},T)$ and $(X,\sigma)$. From where it follows that $(A^{\ZZ},T)$ is topologically conjugate to a mixing $\ZZ$-SFT. As $h_{\mbox{top}}(T) = \frac{1}{k}\log(|A|)$, it follows by Lind's result that $|A|^{\frac{1}{k}}$ must be a Perron number, and this can only occur if $|A|$ is an $k$-th power of some integer (otherwise the minimal polynomial has more than one root and they all have the same modulus). \end{proof}

    \begin{proof}[Proof of~\Cref{thm:roots_of_shift_ingroups}]
        If $k=\ell$ then obviously $\Aut( \{1,\dots,n^k\}^G) \cong \Aut( \{1,\dots,n^{\ell}\}^G)$.
        
        Conversely, if $\Aut( \{1,\dots,n^k\}^G) \cong \Aut( \{1,\dots,n^{\ell}\}^G)$ then $Z(\Aut( \{1,\dots,n^k\}^G))\cong Z(\Aut( \{1,\dots,n^{\ell}\}^G))$ and, by~\Cref{thm:Ryan}, both are isomorphic to $Z(G)$. It follows that $\mathcal{R}(\{1,\dots,n^k\}^G) = \mathcal{R}(\{1,\dots,n^\ell\}^G)$. 

        Suppose without loss of generality that $k \geq \ell$. Let $n = p_1^{\alpha_1}\cdots p_r^{\alpha_r}$ be the prime decomposition of $n$ and notice that $n$ is an $i$-th power if and only if $i$ it divides $d = \gcd(\alpha_1,\dots,\alpha_r)$. 
        
        Set $m = n^{\frac{1}{d}}$. It follows that $m \geq 2$ and that $m$ is not the $i$-th power of any positive integer for every $i \geq 2$. By~\Cref{lem:algebraicroots}, we deduce that $m^{d\ell}=n^{\ell}$ is an $dk$-th power. This implies that $m^{\ell}$ is an $k$-th power. It follows that $k$ divides $(\alpha_i /d)\ell$ for each $i=1,\dots,r $, and since  $\alpha_1 /d,\dots, \alpha_r/d$ are coprime, this implies that $k$ divides $l = \gcd((\alpha_1 /d)\ell,\dots (\alpha_r /d)\ell)$. Thus $k=l$. \end{proof}

\section{Embeddings}\label{sec:embedding}

The general objective of this section is to study when two given automorphism groups embed into each other. We make the following elementary observation that follows directly from the CHL theorem.

\begin{observation}\label{obs:trivial_embedding_when_group_and_alphabet_is_same}
     If $G,H$ are groups and $H$ embeds into $G$, then for every alphabet $A$ we have that $\Aut(A^H)$ embeds into $\Aut(A^G)$.
\end{observation}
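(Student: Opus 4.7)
My approach is to apply the Curtis--Hedlund--Lyndon theorem (\Cref{thm:curtis_lindon_hedlund}) and recycle the local rule of any automorphism of $A^H$ as a local rule on $A^G$. Without loss of generality I identify $H$ with its image under the given embedding, so that $H \leqslant G$. Given $\varphi \in \Aut(A^H)$, CHL provides a finite $F \subset H$ and a map $\Phi\colon A^F \to A$ with $\varphi(x)(h) = \Phi((h^{-1}x)|_F)$ for every $x \in A^H$ and $h \in H$. Since $F$ is also a finite subset of $G$, I define $\tilde{\varphi}\colon A^G \to A^G$ by the same formula $\tilde{\varphi}(y)(g) := \Phi((g^{-1}y)|_F)$; by CHL this $\tilde{\varphi}$ is automatically continuous and $G$-equivariant, hence an element of $\operatorname{End}(A^G)$.

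I then need to verify that the assignment $\varphi \mapsto \tilde{\varphi}$ is a well-defined injective group homomorphism whose image lies in $\Aut(A^G)$. Independence from the chosen local rule is routine: any two local rules encoding $\varphi$ may be extended to a common window $F_1 \cup F_2$ (by ignoring the extra coordinates), where they must agree, so they produce the same map on $A^G$. The homomorphism property $\widetilde{\varphi_1 \circ \varphi_2} = \tilde{\varphi}_1 \circ \tilde{\varphi}_2$ follows by composing local rules. Invertibility then comes for free: $\tilde{\varphi} \circ \widetilde{\varphi^{-1}} = \widetilde{\varphi \circ \varphi^{-1}} = \Id_{A^G}$, so $\tilde{\varphi} \in \Aut(A^G)$.

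For injectivity, if $\tilde{\varphi} = \Id_{A^G}$, then for any $x \in A^H$ I pick any extension $y \in A^G$ with $y|_H = x$; since $F \subset H$ and hence $hF \subset H$ for all $h \in H$, the restriction $(h^{-1}y)|_F$ depends only on $x|_{hF}$, so $\varphi(x)(h) = \tilde{\varphi}(y)(h) = x(h)$ for every $h \in H$, forcing $\varphi = \Id_{A^H}$. There is essentially no obstacle in this argument; the statement is a direct consequence of CHL once $H$ is viewed as a subgroup of $G$, which is why the authors frame it as an observation rather than a lemma.
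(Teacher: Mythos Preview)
Your proposal is correct and is precisely the argument the paper has in mind: the observation is stated without proof beyond the remark that it ``follows directly from the CHL theorem,'' and your write-up spells this out in full detail. Note also that your argument is essentially the special case $X=A^G$ of the proof of item~(6) in \Cref{prop:basic-things-SI}, where the same recycling of local rules is carried out for $X|_H \hookrightarrow X$.
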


Thus our focus will be on the case where either $H$ does not embed into $G$, or $A^G$ is replaced by a more general subshift. We will begin by proving a ``toy case''. The purpose of this is to familiarize the reader with conveyor belts and to provide intuition for the results later on in this section. The next result will not be used afterwards and is implied by~\Cref{thm:EmbeddingF2}, thus experts can safely skip to the next subsection.

\begin{proposition}\label{prop:embed_Z_easy}
Let $G$ be a non locally finite group and let $S\Subset G$ be a symmetric set such that $\langle S\rangle$ is infinite. For every pair of alphabets $A,B$ with $|B|\geq |A|^2|S|^2$ we have that $\Aut(A^{\ZZ})$ embeds into $\Aut(B^G)$.
\end{proposition}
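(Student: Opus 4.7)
The plan is to implement a conveyor belt construction adapted to the full shift $B^G$. Since $|B| \geq |A|^2|S|^2$, I identify a subset of $B$ bijectively with $A \times A \times S \times S$ and treat the remaining symbols as an inert background that every automorphism in our construction will leave fixed. A symbol $(a_-, a_+, s_-, s_+)$ at vertex $g$ is interpreted as placing $g$ on a belt: it has a forward pointer $s_+ \in S$ to the successor $gs_+$ and a backward pointer $s_- \in S$ to the predecessor $gs_-$, together with edge letters $a_-, a_+ \in A$ on the two incident edges, which the automorphism will modify.

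Given $\varphi \in \Aut(A^{\ZZ})$ with a local rule $\Phi \colon A^{\{-N,\ldots,N\}} \to A$ and an inverse rule $\Psi \colon A^{\{-M,\ldots,M\}} \to A$, I define $\tilde\varphi \in \Aut(B^G)$ as follows. Call a site $g$ \emph{consistent of order $r$} if, walking forward and backward from $g$ via the pointers $s_+, s_-$ for $r$ steps each, all the pairwise compatibility relations $s_-(g' s_+(g')) = s_+(g')^{-1}$ hold along the way (and symmetrically in the backward direction). At a site that is consistent of order $R$ (for some fixed large $R$ chosen below), read off the edge letters along the belt segment through $g$, apply $\Phi$ on the central window of size $2N+1$ to obtain new values of $a_-(g), a_+(g)$; at other sites act by identity. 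This is evidently a local rule depending on a bounded neighborhood of $g$, so $\tilde\varphi$ is continuous and $G$-equivariant, and $\varphi \mapsto \tilde\varphi$ is a group homomorphism.

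The central technical point is showing $\tilde\varphi$ is bijective, with inverse $\widetilde{\varphi^{-1}}$ defined analogously from $\Psi$. Since $\tilde\varphi$ never alters the structural pointers $(s_-, s_+)$, the consistency condition at any site is identical in $x$ and $\tilde\varphi(x)$. On a site well inside a large consistent zone, the composition $\widetilde{\varphi^{-1}} \circ \tilde\varphi$ acts on the belt letters exactly as $\varphi^{-1}\circ \varphi$, which is the identity; on sites where the belt is too broken to pass the consistency check, both $\tilde\varphi$ and $\widetilde{\varphi^{-1}}$ act as the identity. The main obstacle is the boundary region: a site $g$ may be consistent enough that $\tilde\varphi$ acts, while some neighbor $h$ within the $M$-window used by $\widetilde{\varphi^{-1}}$ at $g$ is not sufficiently consistent, so the letters at $h$ fed into $\Psi$ are not $\Phi$-updated, breaking the composition. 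I would handle this by choosing $R$ large enough relative to $N+M$ that the boundary discrepancy is pushed out of the windows used in the composition; equivalently, one can use the generous $|A|^2$ factor to carry an auxiliary synchronization bit set by $\tilde\varphi$ and read by $\widetilde{\varphi^{-1}}$ to decide whether to invert or leave unchanged, ensuring exact invertibility.

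For injectivity of $\varphi \mapsto \tilde\varphi$: since $\langle S\rangle$ is infinite, there exist arbitrarily long injective $S$-walks $\Id_G = g_0, g_1, \ldots, g_L$ in $\langle S\rangle$, so for any finite word $w \in A^L$ one can build a configuration $x \in B^G$ whose active symbols along this walk encode a belt whose edge letters read as $w$; $\tilde\varphi$ then acts in the interior of the belt the same way $\varphi$ acts on $w$. Hence if $\varphi \neq \varphi'$ differ on some finite input, the corresponding configurations witness $\tilde\varphi \neq \widetilde{\varphi'}$, and the homomorphism is injective, yielding the embedding $\Aut(A^{\ZZ}) \hookrightarrow \Aut(B^G)$.
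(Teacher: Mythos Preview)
Your proposal has the right skeleton (pointers in $S^2$, a tape alphabet carried by the $A^2$ factor, act only where the pointer structure is coherent), but it is missing the key idea that makes the conveyor belt work and that is precisely why the alphabet carries a factor of $|A|^2$ rather than $|A|$. In the paper's proof the two $A$-coordinates are not ``edge letters on the two incident edges'' but a \emph{top track} and a \emph{bottom track}. One then defines a single bijection $f_x$ of $G\times\{\top,\bot\}$: on the top track you follow the forward pointer, on the bottom track you follow the backward pointer, and whenever the relevant pointer is inconsistent you \emph{turn around} by flipping tracks. The orbit of any pair $(g,t)$ under $f_x$ is therefore a genuine $\ZZ$-orbit with no boundary, and reading the track letters along it yields an honest element $c[x,g,t]\in A^{\ZZ}$. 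The induced map $\psi(\varphi)$ simply applies $\varphi$ (the automorphism, not a fixed local rule) to each such configuration, so $\psi(\varphi_1\circ\varphi_2)=\psi(\varphi_1)\circ\psi(\varphi_2)$ is immediate and there is nothing to patch at boundaries.

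By contrast, your scheme applies a \emph{fixed} local rule $\Phi$ at sites that are ``consistent of order $R$'' and the identity elsewhere. This creates exactly the boundary problem you identify, and neither of your proposed fixes actually closes it: enlarging $R$ only moves the boundary, it does not remove it, and the ``synchronization bit'' idea is not specified and would in any case have to be chosen in a way that does not depend on $\varphi$ if the map is to be a homomorphism. More seriously, your assertion that $\varphi\mapsto\tilde\varphi$ is ``evidently'' a group homomorphism is not justified: your $R$ is chosen in terms of the window sizes $N,M$ of $\varphi$ and $\varphi^{-1}$, so it varies with $\varphi$, and there is no reason that $\tilde\varphi_1\circ\tilde\varphi_2$ should agree with $\widetilde{\varphi_1\circ\varphi_2}$ near the (different) boundaries of their respective consistency zones. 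The turnaround trick eliminates all of these issues at once; without it the argument is incomplete.
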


\begin{proof}
    Choose $B_0\subset B$ with $|B_0| = |A|^2|S|^2$ and fix a bijection $\Phi\colon B_0 \to S^2 \times A^2$. For $c \in B_0$ with $\Phi(c) = (s,t,a,b)$, we write $\mathfrak{b}(c)=s$, $\mathfrak{f}(c)=t$, $\top(c)=a$, $\bot(c)=b$. The first two coordinates represent pointers backward and forward respectively, while the last two coordinates represent symbols at a top and bottom ``track'' respectively.
    
    Given $x\in X$ and $g \in G$ we say that $x$ at $g$ is
    \begin{enumerate}
        \item \define{forward consistent}, if $x(g) \in B_0$, $x(g\mathfrak{f}(x(g)) \in B_0$ and furthermore,
        \[\mathfrak{f}(x(g))^{-1} = \mathfrak{b}( x( g\mathfrak{f}(x(g)) ) .\]
        \item \define{backward consistent}, if $x(g) \in B_0$, $x(g\mathfrak{b}(x(g)) \in B_0$ and furthermore,
        \[\mathfrak{b}(x(g))^{-1} = \mathfrak{f}( x( g\mathfrak{b}(x(g)) ) .\]
    \end{enumerate}

    If the consistency conditions are satisfied, then following a pointer forward, and then backward in the new element (or vice-versa), amounts to no movement at all. We illustrate this in~\Cref{fig:well-formed}.

\begin{figure}[ht!]
\centering
\begin{tikzpicture}
    \begin{scope}[shift={(0,0)}] 
    \node at (0.5, 1) {$(s,t,\cdot,\cdot)$};
    \draw[fill=white] (0.5,-1) circle (0.4);
    \draw[color=gray, thick, dashed, ->] (0.5,0.5) to (0.5,-0.5);
    \node at (0.5,-1) {$g$};
    \draw[->, thick, bend right, shorten >=0.5cm,shorten <=0.5cm] (0.5,-1) to (2.5,-1);
    \draw[->, thick, bend right, shorten >=0.5cm,shorten <=0.5cm] (2.5,-1) to (0.5,-1);
    \node at (1.5,-1.5) {$t$};
    \node at (1.5,-0.5) {$t^{\text{-}1}$};
    \end{scope}
\begin{scope}[shift={(2,0)}] 
    \node at (0.5, 1) {$ (t^{\text{-}1},\cdot,\cdot,\cdot)$};
    \draw[fill=white] (0.5,-1) circle (0.4);
    \draw[color=gray, thick, dashed, ->] (0.5,0.5) to (0.5,-0.5);
    \node at (0.5,-1) {$gt$};
    \end{scope}
\begin{scope}[shift={(-2,0)}] 
    \node at (0.5, 1) {$(\cdot,s^{\text{-}1},\cdot,\cdot)$};
    \draw[fill=white] (0.5,-1) circle (0.4);
    \draw[color=gray, thick, dashed, ->] (0.5,0.5) to (0.5,-0.5);
    \node at (0.5,-1) {$gs$};
    \draw[->, thick, bend right, shorten >=0.5cm,shorten <=0.5cm] (2.5,-1) to (0.5,-1);
    \draw[->, thick, bend right, shorten >=0.5cm,shorten <=0.5cm] (0.5,-1) to (2.5,-1);
        \node at (1.5,-1.5) {$s^{\text{-}1}$};
        \node at (1.5,-0.5) {$s$};
\end{scope}

\end{tikzpicture}

\caption{A portion of a configuration that is forward and backward consistent at $g$. The bottom row represents elements $g, gt$ and $gs$ in $G$ and the top row the symbols at those positions.}
\label{fig:well-formed}
\end{figure}

  For $x \in X$, we define the conveyor belt map $f_x \in \operatorname{Sym}(G \times \{\top,\bot\})$ as follows. Namely, for $g \in G$ we let 
    \begin{align*}
        f_x(g,\top) & = \begin{cases}
            (g\mathfrak{f}(x(g)),\top) & \mbox{if } x \mbox{ at } g \mbox{ is forward consistent}.\\
            (g,\bot) & \mbox{otherwise}.
        \end{cases}\\
        f_x(g,\bot) & = \begin{cases}
            (g\mathfrak{b}(x(g)),\bot) & \mbox{if } x \mbox{ at } g \mbox{ is backward consistent}.\\
            (g,\top) & \mbox{otherwise}.
        \end{cases}
    \end{align*}

    We remark that $f_x$ is indeed bijective, and its inverse is given by 

    \begin{align*}
        f_x^{-1}(g,\top) &= \begin{cases}
            (g\mathfrak{b}(x(g)),\top) & \mbox{if } x \mbox{ at } g \mbox{ is backward consistent}.\\
            (g,\bot) & \mbox{otherwise}.
        \end{cases}\\
         f_x^{-1}(g,\bot) &= \begin{cases}
            (g\mathfrak{f}(x(g)),\bot) & \mbox{if } x \mbox{ at } g \mbox{ is forward consistent}.\\
            (g,\top) & \mbox{otherwise}.
        \end{cases}
    \end{align*}

Intuitively, if we think of $G\times \{\top,\bot\}$ as a ``top'' and ``bottom'' track of $G$, then the $\ZZ$-action induced by $f_x$ partitions $G\times \{\top,\bot\}$ into cycles or bi-infinite paths. These possibilities are illustrated in~\Cref{fig:pathtypes}.

\begin{figure}[h!]
\centering
\begin{tikzpicture}[decoration=border]
    \begin{scope}[shift={(0,0)}] 
        \foreach \i in {-2,...,2}{
        \draw[thick, ->] (\i,0) -- (\i+1,0);
        \draw[thick, ->] (\i+1,-0.5) -- (\i,-0.5);
        }
        \draw[thick, ->] (-2,-0.5) -- (-2,0);
        \node at (3.5,0) {$\cdots$};
        \node at (3.5,-0.5) {$\cdots$};
        \node at (1,-1) {(1) backward inconsistent element};
    \end{scope}
    \begin{scope}[shift={(8,0)}] 
        \foreach \i in {-2,...,2}{
        \draw[thick, ->] (\i,0) -- (\i+1,0);
        \draw[thick, ->] (\i+1,-0.5) -- (\i,-0.5);
        }
        \draw[thick, ->] (3,0) -- (3,-0.5);
        \node at (-2.5,0) {$\cdots$};
        \node at (-2.5,-0.5) {$\cdots$};
        \node at (0.8,-1) {(2) forward inconsistent element};
    \end{scope}
    \begin{scope}[shift={(0,-2)}] 
        \foreach \i in {-2,...,2}{
        \draw[thick, ->] (\i,0) -- (\i+1,0);
        \draw[thick, ->] (\i+1,-0.5) -- (\i,-0.5);
        }
        \node at (3.5,0) {$\cdots$};
        \node at (3.5,-0.5) {$\cdots$};
        \node at (-2.5,0) {$\cdots$};
        \node at (-2.5,-0.5) {$\cdots$};
        \node at (0.5,-1) {(3) no inconsistent elements};
    \end{scope}
    \begin{scope}[shift={(8,-2)}] 
        \foreach \i in {-2,...,2}{
        \draw[thick, ->] (\i,0) -- (\i+1,0);
        \draw[thick, ->] (\i+1,-0.5) -- (\i,-0.5);
        }
        \draw[thick, ->] (-2,-0.5) -- (-2,0);
        \draw[thick, ->] (3,0) -- (3,-0.5);
        \node at (0.5,-1) {(4) two inconsistent elements};
    \end{scope}
    \begin{scope}[shift={(0,0)}] 

\end{scope}
\begin{scope}[shift={(0,0)}] 

\end{scope}

\end{tikzpicture}
\caption{Path types according to $f_x$. }
\label{fig:pathtypes}
\end{figure}

Next we use the action $\ZZ \overset{f_x}{\curvearrowright} G\times \{\top,\bot\}$ to induce configurations in $A^{\ZZ}$ by reading the top and bottom symbols respectively following $f_x$. namely, given $x \in A^G$ and $g \in G$ such that $x(g) \in B_0$, we define $c[x,g,\top],c[x,g,\bot] \in A^{\ZZ}$ which are given at $k \in \ZZ$ by \[ c[x,g,\top](k) = \begin{cases}
            \top(x(h)) & \mbox{ if } f_x^k(g,\top) = (h,\top)\\
            \bot(x(h)) & \mbox{ if } f_x^k(g,\top) = (h,\bot),\\
        \end{cases}   \]
        \[c[x,g,\bot](k) = \begin{cases}
            \top(x(h)) & \mbox{ if } f_x^k(g,\bot) = (h,\top)\\
            \bot(x(h)) & \mbox{ if } f_x^k(g,\bot) = (h,\bot).\\
        \end{cases}   \]

        We remark that for every $g,h \in G$, $x \in X$ and $t \in \{\top,\bot\}$ such that $x(h^{-1}g) \in B_0$, we have the relation \[c[hx,g,t]=c[x,h^{-1}g,t].\]

        Finally, we define $\psi\colon \Aut(A^{\ZZ}) \to \Aut(B^{G})$. For $\varphi 
        \in \Aut(A^{\ZZ})$, $x \in B^G$ and $g \in G$ we let \[\bigl(\psi(\varphi)(x)\bigr)(g) = \begin{cases}
            \Phi^{-1}\bigl(\mathfrak{b}(x(g)),\mathfrak{f}(x(g)), \varphi(c[x,g,\top])(0), \varphi(c[x,g,\bot])(0)     \bigr) & \mbox{if } x(g) \in B_0,\\
            x(g) & \mbox{otherwise.}
        \end{cases}\]

        In simpler words, $\psi(\varphi)$ is the map on $B^G$ which leaves symbols in $B\smallsetminus B_0$ unchanged, and for symbols in $B_0$ it leaves the two pointer coordinates unchanged, and updates the symbol coordinates as if it applied $\varphi$ in the copies of $\ZZ$ induced by the pointers.

        It follows from the definition that for every $\varphi\in \Aut(A^{\ZZ})$, $x \in B^G$, $g \in G$ and $t \in \{\top,\bot\}$ such that $x(g) \in B_0$, then $\varphi(c[x,g,t]) = c[\psi(\varphi)(x),g,t]$, from where it follows that for every $\varphi_1,\varphi_2 \in \Aut(A^{\ZZ})$, we have $\psi(\varphi_1 \circ \varphi_2) = \psi(\varphi_1)\circ \psi(\varphi_2)$.

        The fact that $\psi(\varphi)$ is continuous is clear as $\varphi$ is continuous. The $G$-equivariance of $\psi(\varphi)$ follows directly from the relation $c[hx,g,t]=c[x,h^{-1}g,t]$ and
        thus $\psi(\varphi)\in \operatorname{End}(B^G)$. As $\psi(\varphi^{-1}) = (\psi(\varphi))^{-1}$, we deduce that $\psi(\varphi)\in \Aut(B^G)$. Hence $\psi\colon \Aut(A^{\ZZ}) \to \Aut(B^{G})$ is a homomorphism.
        
        It only remains to show that $\psi$ is an injective map. Indeed, as the subgroup generated by $S$ is infinite, there exists a bi-infinite sequence $(g_i)_{i \in \ZZ}$ in $G$ of distinct elements, with the property that $g_i^{-1}g_{i+1}\in S$ for every $i \in \ZZ$ (this follows from K\"onig's infinity lemma applied to the Cayley graph of $\langle S\rangle $ with generating set $S$).
        
        Let $\varphi_1,\varphi_2$ be distinct automorphisms on $\Aut(A^{\ZZ})$, thus there exists $z \in A^{\ZZ}$ such that $\varphi_1(z)(0) \neq \varphi_2(z)(0)$. Fix some $a_0 \in A, b_0 \in B$ and let $\hat{x} \in B^{G}$ be the configuration given by \[ \hat{x}(g) = \begin{cases}
           \Phi^{-1}(  g_i^{-1}g_{i-1} , g_i^{-1}g_{i+1}, z(i), a_0) &\mbox{ if } g = g_i \mbox{ for some } i \in \ZZ\\
            b_0 &\mbox{ otherwise. } 
        \end{cases}  \]

        It is clear from the definition that $c[\hat{x},g_0,\top]=z$. From the definition of $\psi$ it follows that $\bigl(\psi(\varphi_1)(x)\bigr)(g_0)\neq \bigl(\psi(\varphi_2)(x)\bigr)(g_0)$ and thus $\psi(\varphi_1)\neq \psi(\varphi_2)$
        .\end{proof}

\subsection{Proof of~\Cref{thm:EmbeddingZ}}

For $k \geq 1$, we denote by $F_k$ the free group on generators $\{a_1,\dots,a_k\}$. Before proving~\Cref{thm:EmbeddingZ} we show an elementary lemma.

\begin{lemma}\label{lem:fat_free_group}
    Let $G$ be a group and suppose that $F_k$ embeds into $G$ for some $k \geq 1$. For every $T\Subset G$ there exists $\gamma_1,\dots,\gamma_k \in G$ with $\langle \gamma_1,\dots,\gamma_k\rangle \cong F_k$ and such that the collection $\{wT\}_{w \in \langle \gamma_1,\dots,\gamma_k\rangle}$ is pairwise disjoint.
\end{lemma}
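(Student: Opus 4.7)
The plan is to fix an embedding $\iota\colon F_k\hookrightarrow G$, write $\alpha_i=\iota(a_i)$ for the images of the free generators $a_1,\dots,a_k$ of $F_k$, and then take $\gamma_i=\alpha_i^N$ for $N$ large enough. Before choosing $N$, I will recast the desired disjointness condition algebraically: writing $H=\langle \gamma_1,\dots,\gamma_k\rangle$, the collection $\{wT\}_{w\in H}$ is pairwise disjoint if and only if $w_1T\cap w_2T=\varnothing$ for all distinct $w_1,w_2\in H$, which is in turn equivalent to $H\cap (TT^{-1})=\{\Id_G\}$. This reformulation is the only conceptual step; everything after it becomes a routine argument about free groups.

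For the freeness of $H$ I will use the standard observation that for any $N\geq 1$ the elements $a_1^N,\dots,a_k^N$ freely generate a rank $k$ subgroup of $F_k$. Indeed, any reduced word $a_{i_1}^{Nm_1}a_{i_2}^{Nm_2}\cdots a_{i_r}^{Nm_r}$ with $i_j\ne i_{j+1}$ and $m_j\ne 0$ has length $N\sum_j|m_j|\geq N$ in the generators $a_1,\dots,a_k$, so it is nontrivial. Applying $\iota$ yields that $\gamma_i=\alpha_i^N$ freely generate a subgroup $H\leqslant \iota(F_k)$ isomorphic to $F_k$, regardless of the value of $N$.

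To finish, I will observe that $S=(TT^{-1})\cap \iota(F_k)$ is finite, hence so is $\iota^{-1}(S)\subset F_k$; I let $L$ be the maximum length of an element of $\iota^{-1}(S)$ with respect to $a_1,\dots,a_k$ and pick any $N>L$. By the computation in the previous paragraph, every nontrivial element of $H$ pulls back under $\iota$ to a word of length at least $N>L$, so it lies outside $S\smallsetminus\{\Id_G\}$. Since $H\subset\iota(F_k)$, this gives $H\cap TT^{-1}=H\cap S=\{\Id_G\}$, which by the initial reformulation proves the lemma. I do not anticipate any real obstacle: the only point requiring thought is the reformulation of disjointness as an algebraic intersection condition, after which the choice $\gamma_i=\alpha_i^N$ with $N$ exceeding the word lengths of elements of $\iota^{-1}(TT^{-1}\cap\iota(F_k))$ works directly.
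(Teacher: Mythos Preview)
Your proof is correct and essentially identical to the paper's: both fix an embedding $\iota\colon F_k\hookrightarrow G$, note that the set $\{u\in F_k:\iota(u)\in TT^{-1}\}$ is finite, and take $\gamma_i=\iota(a_i)^{N}$ with $N$ exceeding the maximal word length in that set. Your explicit reformulation of pairwise disjointness as $H\cap TT^{-1}=\{\Id_G\}$ is the only cosmetic difference; the paper phrases the same thing as ``$w^{-1}w'T\cap T\neq\varnothing$ forces $w=w'$''.
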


\begin{proof}
    Fix $T\Subset G$ and let $\psi\colon F_k \to G$ be an injective homomorphism. Since $\psi$ is injective, there are at most $|TT^{-1}|$ elements $u$ of $F_k$ such that $T \cap \psi(u)T \neq \varnothing$. 
    
    Consider the word metric $|\cdot|$ on $F_k$ with respect to the canonical generating set $\{a_1,\dots,a_k\}$ and let $n_0 = 1+\max \{|u| : u \in F_k \mbox{ and } T \cap \psi(u)T \neq \varnothing\}$. For $i \in \{1,\dots,k\}$, take $\gamma_i = \psi(a_i^{n_0})$. Clearly $\gamma_1,\dots,\gamma_k$ generate an isomorphic copy of $F_k$ in $G$.

    Let $w,w' \in \langle \gamma_1,\dots,\gamma_k\rangle$ and suppose $wT \cap w'T \neq \varnothing$. It follows that $w^{-1}w'T \cap T \neq \varnothing$. Letting $u \in \langle a_1^{n_0},\dots, a_k^{n_0}\rangle$ such that $\psi(u) = w^{-1}w'$ we obtain that $|u| < n_0$, which can only occur if $u=\Id_{F_k}$, that is, if $w=w'$.
\end{proof}

Let $G$ be a group. Notice that $G$ admits a torsion-free element if and only if $F_1\cong \ZZ$ embeds into $G$. Also, if $F_2$ embeds into $G$ then $F_{k}$ embeds into $G$ for every $k \geq 1$. Therefore in order to prove both parts of~\Cref{thm:EmbeddingZ} it will suffice to prove that if $F_k$ embeds into $G$ for some $k \geq 1$, then $\Aut(A^{F_k})$ embeds into $\Aut(X)$.

\begin{theorem}\label{thm:embedding_Z_countable}(\Cref{thm:EmbeddingZ})
    Let $G$ be a group, $k \geq 1$ and $X$ be a non-trivial strongly irreducible $G$-subshift. If $F_k$ embeds into $G$, then $\Aut(A^{F_k})$ embeds into $\Aut(X)$ for every finite alphabet $A$.
\end{theorem}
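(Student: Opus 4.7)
The plan is to generalize the conveyor-belt argument of~\Cref{prop:embed_Z_easy} in two complementary ways: to replace the large ambient alphabet by egg markers of a general nontrivial strongly irreducible subshift (supplied by~\Cref{lem:existence_egg_markers_general}), and to upgrade the two-track conveyor belt adapted to $\ZZ$ to a $2k$-track conveyor-tree construction adapted to the Cayley graph of $F_k$.

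First, using the embedding $F_k\hookrightarrow G$ and~\Cref{lem:fat_free_group}, I would pick group elements $\gamma_1,\dots,\gamma_k\in G$ generating a free subgroup of rank $k$ with good disjointness properties on translates of a finite set to be chosen below, and set $T=\{\gamma_1^{\pm 1},\dots,\gamma_k^{\pm 1}\}$. Using~\Cref{prop:lenguaje-de-un-subshit-SI-tiene-muchos-elementos} I would then choose $F\Subset G$ large enough that $|L_F(X)|\geq |A|^{2k}|T|^{2k}$, and invoke~\Cref{lem:existence_egg_markers_general} to obtain a collection $\mathcal{E}$ of $(Y,W)$-egg markers for $X$ realizing $L_F(X)$. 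Fixing a subset $\mathcal{E}_0\subset \mathcal{E}$ together with a bijection $\Phi\colon \mathcal{E}_0 \to A^{2k}\times T^{2k}$ lets me read off each marker $q\in \mathcal{E}_0$ a $2k$-tuple of data symbols $(\top_j(q))_{j=1}^{2k}\in A^{2k}$ and a $2k$-tuple of pointers $(\mathfrak{p}_j(q))_{j=1}^{2k}\in T^{2k}$, in direct analogy with the $(\mathfrak{b},\mathfrak{f},\top,\bot)$ encoding used in~\Cref{prop:embed_Z_easy}.

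Next, for each $x\in X$ I would set $\Delta_x=\{g\in G : (g^{-1}x)|_W\in \mathcal{E}_0\}$ and build a conveyor-tree dynamical system on $\Delta_x\times \{1,\dots,2k\}$: at $(g,j)$ one interprets $\mathfrak{p}_j(x(g))$ as a step in the $j$-th $F_k$-direction, checks the reciprocal-pointer consistency condition (the matching-direction pointer at the target position should lead back to $g$), and either moves to the target on the same track or bounces to a prescribed neighboring track. Generalizing the cycle-or-bi-infinite-path dichotomy of~\Cref{prop:embed_Z_easy}, each orbit should carry a free $F_k$-action, and reading off data symbols along the orbit produces elements $c[x,g,j]\in A^{F_k}$. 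I would then define $\psi\colon \Aut(A^{F_k})\to \Aut(X)$ by sending $\varphi$ to the lift (via~\Cref{lem:lema_maestro_de_los_reemplzos}) of the egg automorphism of $\eta_{\mathcal{E}}(X)$ which, at each $g\in \Delta_x$, leaves the pointers of the marker at $g$ unchanged and updates each data symbol $\top_j$ to $\varphi(c[x,g,j])(\Id_{F_k})$; functoriality of the orbit-wise action makes $\psi$ a homomorphism, while injectivity follows by engineering, for any witness $z\in A^{F_k}$ of $\varphi_1\neq\varphi_2$, a configuration $x\in X$ whose conveyor tree through a chosen position realizes $z$, using strong irreducibility together with the disjointness from Step 1.

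The hard part will be the $F_k$ analog of the two-track bouncing trick: ensuring that the pointer-consistency rules produce orbits genuinely admitting a free $F_k$-action requires carefully synchronizing $2k$ tracks along the branching Cayley tree, which is combinatorially more intricate than the linear case of~\Cref{prop:embed_Z_easy}, and it is conceivable that the cleanest implementation introduces auxiliary structure on the tracks (e.g.\ orientations or doublings per generator) rather than attempting a direct bounce to a fixed partner track.
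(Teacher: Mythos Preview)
Your overall architecture---egg markers via~\Cref{lem:existence_egg_markers_general}, an egg-model factor, conveyor belts, and the injectivity witness built from a disjoint family of translates supplied by~\Cref{lem:fat_free_group}---matches the paper's. Two points deserve comment.

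First, the pointer layer $T^{2k}$ you encode in the markers is unnecessary here. Because $F_k$ genuinely embeds in $G$, the paper simply fixes once and for all the elements $\gamma_1,\dots,\gamma_k$ from~\Cref{lem:fat_free_group} and declares ``forward $i$-consistent at $g$'' to mean that both $g$ and $g\gamma_i$ carry an egg marker. No direction data is stored in the markers; the alphabet of the egg model is just $A^{\mathfrak T}$ for a track set $\mathfrak T$. Your pointer approach is not wrong---it is essentially what the paper does later for~\Cref{thm:EmbeddingF2} when $F_k$ does \emph{not} embed---but it adds a layer of bookkeeping that the hypothesis here lets you drop.

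Second, and more substantively, your track set $\{1,\dots,2k\}$ and the vague ``bounce to a prescribed neighbouring track'' do not give a well-defined $F_k$-action. With $2k$ tracks you must say what $a_i$ does on the tracks not associated with generator $i$, and there is no canonical answer that makes the resulting maps $f_{1,x},\dots,f_{k,x}$ simultaneously invertible and rich enough to read off arbitrary $z\in A^{F_k}$. The paper resolves this with $\mathfrak T=\{\top,\bot\}^k$: the $i$-th conveyor map looks only at the $i$-th coordinate of the track, moves by $\gamma_i$ if that coordinate is $\top$ and by $\gamma_i^{-1}$ if it is $\bot$, and on inconsistency flips \emph{only} the $i$-th coordinate. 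This makes each $f_{i,x}$ an honest permutation of $G\times\mathfrak T$, so the $F_k$-action is automatic. Your parenthetical ``doublings per generator'' is exactly this fix; commit to it and the argument goes through. Note finally that you do not need the orbits to carry a \emph{free} $F_k$-action: all that is required is that the action be well-defined and that for one carefully built $x$ (the one in the injectivity step) the induced $c[x,g,t]$ equals any prescribed $z\in A^{F_k}$.
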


\begin{proof}

    Fix $k \geq 1$ and consider the \define{set of tracks} $\mathfrak{T} = \{\top,\bot\}^k$. For $t = (t_i)_{1 \leq i \leq k} \in \mathfrak{T}$ and $j \in \{1,\dots,k\}$ we define the $j$-th flip $\rho_j(t)$ of $t$ by changing its $j$-th coordinate, namely \[ \bigl(\rho_j(t)\bigr)_i = \begin{cases} t_i & \mbox{ if } i \neq j\\
    \top  & \mbox{ if } i = j \mbox{ and } t_j = \bot\\ 
      \bot  & \mbox{ if } i = j \mbox{ and } t_j = \top.
    \end{cases}  \]

    Consider the alphabet $\mathcal{B} = A^{\mathfrak{T}}$ and notice that $|\mathcal{B}| = |A|^{2^k}$. Let $K\Subset G$ be a constant of strong irreducibility for $X$. As $X$ is strongly irreducible and non-trivial, there exists $F_0\Subset G$ such that $|L_{F_0}(X)|\geq |\mathcal{B}|$. By~\Cref{lem:existence_egg_markers_general}, we have that $X$ admits complete egg markers, thus there exists $Y, W\Subset G$ such that $Y$ is symmetric, contains the identity, $F_0\subset Y\subset W$ and there exists a collection $\mathcal{E}'$ of $(Y,W)$-egg markers which realizes $L_{F_0}(X)$. As $|L_{F_0}(X)|\geq |\mathcal{B}|$, it follows that $|\mathcal{E}'|\geq |\mathcal{B}|$. We fix a subset $\mathcal{E}\subset \mathcal{E}'$ with $|\mathcal{E}|=|\mathcal{B}|$. Let $A_{\mathcal{E}} = \mathcal{E}\cup \{\star\}$ and consider the map $\eta_{\mathcal{E}} \colon X \to (A_{\mathcal{E}})^G$ given by \[ \eta_{\mathcal{E}}(x)(g) = \begin{cases}
        g^{-1}x|_{W} & \mbox{if } g^{-1}x|_{W} \in \mathcal{E}\\
        \star & \mbox{otherwise.}
    \end{cases}  \]

    Thus $\eta_{\mathcal{E}}(X)\subset 
    (A_{\mathcal{E}})^G$ is the egg-model associated to $(X,\mathcal{E})$ (see~\Cref{sec:buebito}). By~\Cref{lem:lema_maestro_de_los_reemplzos}, we know that the group of egg automorphisms $\Aut_{\mathcal{E}}( \eta_{\mathcal{E}}(X))$ embeds into $\Aut(X)$, therefore to conclude it suffices to show that $\Aut(A^{F_k})$ embeds into $\Aut_{\mathcal{E}}( \eta_{\mathcal{E}}(X))$.

    Given $x \in X$ and $g \in G$, write $y_x = \eta_{\mathcal{E}}(x)$. Note that given $x \in X$, the non-$\star$ positions of $y_x$ are precisely those where a pattern from $\mathcal{E}$ occurs.

    By~\Cref{lem:fat_free_group}, there exist $\gamma_1,\dots,\gamma_k \in G$ such that $\langle \gamma_1,\dots,\gamma_k\rangle \cong F_k$ and the collection $\{uWK\}_{u \in \langle \gamma_1,\dots,\gamma_k\rangle}$ is pairwise disjoint. Given $x \in X$, $g \in G$ and $i \in \{1,\dots,k\}$, we will say that $x$ at $g$ is
    \begin{enumerate}
        \item \define{forward $i$-consistent}, if $\bigl(y_x\bigr)(g) \in \mathcal{E}$ and $\bigl(y_x\bigr)(g\gamma_i) \in \mathcal{E}$.
        \item \define{backward $i$-consistent}, if $\bigl(y_x\bigr)(g) \in \mathcal{E}$ and $\bigl(y_x\bigr)(g\gamma_i^{-1}) \in \mathcal{E}$.
    \end{enumerate}

    We remark that if $x$ is $i$-forward consistent at some $g \in G$, then $x$ is backward $i$-consistent at $g\gamma_i$. Similarly, if $x$ is $i$-backward consistent at $g$, then $x$ is forward $i$-consistent at $g\gamma_i^{-1}$. We use this property to construct a right action of $F_k$ on $G \times \mathfrak{T}$ which depends on $x$. More precisely, for $x \in X$ and $i \in \{1,\dots,k\}$ we let the $i$-th conveyor belt map $f_{i,x} \in \operatorname{Sym}(G\times \mathfrak{T})$ be given by

     \[f_{i,x}(g,t) = \begin{cases}
            (g\gamma_i,t) & \mbox{ if } t_i = \top \mbox{ and } p \mbox{ at } g \mbox{ is forward $i$-consistent}.\\
            (g\gamma_i^{-1},t) & \mbox{ if } t_i = \bot \mbox{ and }  p \mbox{ at } g \mbox{ is backward $i$-consistent}.\\
            (g,\rho_i(t)) & \mbox{ otherwise}.
        \end{cases}\]

    Note that $f_{i,x}$ is indeed invertible, its inverse is given by 
        \[f_{i,x}^{-1}(g,t) = \begin{cases}
            (g\gamma_i^{-1},t) & \mbox{ if } t_i = \top \mbox{ and } p \mbox{ at } g \mbox{ is backward $i$-consistent}.\\
            (g\gamma_i,t) & \mbox{ if } t_i = \bot \mbox{ and }  p \mbox{ at } g \mbox{ is forward $i$-consistent}.\\
            (g,\rho_i(t)) & \mbox{ otherwise}.
        \end{cases}\]

    The maps ${f_{1,x},\dots,f_{k,x}}$ induce a right action $\xi_x$ of $F_k$ on $G \times \mathfrak{T}$ by letting the generator $a_i$ act by $f_{i,x}$. For $u \in F_k$ and $(g,t) \in G \times \mathfrak{T}$, we write $\xi_x(g,t,u)$ for the element of $G \times \mathfrak{T}$ obtained by this action. We observe that if $y_x(g) \in \mathcal{E}$, then for any $t \in \mathfrak{T}$ the orbit of $(g,t)$ under this action will consist uniquely of pairs $(h,t')$ such that $y_x(h) \in \mathcal{E}$. Equivalently, for every $x \in X$ the set $\bigl(y_x\bigr)^{-1}(\mathcal{E}) \times \mathfrak{T}$ is invariant under the right action of $F_k$ induced by $x$. 
    
    Next we use the observation above to induce configurations of $A^{F_k}$. Fix a bijection $\Phi \colon \mathcal{E}\to \mathcal{B}$. For $x \in X$, $g\in G$ and $t \in \mathfrak{T}$ such that $y_x(g) \in \mathcal{E}$, we define $c[x,g,t] \in A^{F_k}$ by \[    c[x,g,t](u) = \Phi\Bigl(\bigl(y_x\bigr)(h)\Bigr)(t') \mbox{ for every } u \in F_k, \mbox{ where }(h,t') = \xi_x(g,t,u).\]

    We note that for $x \in X$, $g,h \in G$ and $t \in \mathfrak{T}$ we have $c[hx,g,t]=c[x,h^{-1}g,t]$.

    Next we define a map $\psi_{\mathcal{E}}\colon \Aut(A^{F_k}) \to \Aut_{\mathcal{E}}( \eta_{\mathcal{E}}(X))$. For $\varphi \in  \Aut(A^{F_k})$, $x \in X$ and $g \in G$ we let \[ \Bigl(\bigl(\psi_{\mathcal{E}}(\varphi)\bigr)(y_x)\Bigr)(g) = \begin{cases}
        \Phi^{-1}\Bigl(\bigl(\varphi(c[x,g,t])(\Id_{F_k})\bigr)_{t \in \mathfrak{T}}\Bigr) & \mbox{ if } y_x(g) \in \mathcal{E},\\
        \star & \mbox{ if } y_x(g) = \star.
    \end{cases}  \]
    
    Intuitively, $\psi_{\mathcal{E}}(\varphi)$ leaves the star positions unchanged, and everywhere else it reads the configuration induced by $x$ on each tape, applies $\varphi$, and updates the corresponding tape symbol accordingly.

    By~\Cref{lem:egg_exchanges_well_defined} it follows that for every $\varphi \in \Aut(A^{F_k})$ and $x \in X$, then $\bigl(\psi_{\mathcal{E}}(\varphi)\bigr)(y_x) \in \eta_{\mathcal{E}}(X)$. Since $\varphi$ is continuous, it is clear that $\psi_{\mathcal{E}}(\varphi)$ is continuous. Furthermore, the relation $c[hx,g,t]=c[x,h^{-1}g,t]$ implies that $\psi_{\mathcal{E}}(\varphi)$ is a $G$-equivariant map, thus $\psi_{\mathcal{E}}(\varphi) \in \operatorname{End}(\eta_{\mathcal{E}}(X))$. 
    
    A direct computation shows that the map $\psi_{\mathcal{E}}$ induces an homomorphism from $\Aut(A^{F_k})$ to $\operatorname{End}(\eta_{\mathcal{E}}(X))$, and thus in fact $\psi_{\mathcal{E}}(\varphi) \in \Aut(\eta_{\mathcal{E}}(X))$. Finally, as the position of stars is preserved, each $\psi_{\mathcal{E}}(\varphi)$ is in fact an egg automorphism and thus we conclude that $\psi_{\mathcal{E}}\colon \Aut(A^{F_k}) \to \Aut_{\mathcal{E}}( \eta_{\mathcal{E}}(X))$ is a homomorphism.

    Let us argue that $\psi_{\mathcal{E}}$ is injective. Let $\varphi_1,\varphi_2 \in \Aut(A^{F_k})$ be distinct and denote $\bar{t}= (\top,\dots,\top)\in \mathfrak{T}$. It follows that there is $z \in A^{F_k}$ such that $\varphi_1(z)(\Id_{F_k})\neq \varphi_2(z)(\Id_{F_k})$. Fix some $a \in A$, for $h \in \langle \gamma_1,\dots,\gamma_k\rangle$ we let $b_h \in \mathcal{B}$ be given by \[ b_h(t) = \begin{cases}
        z(h) & \mbox{ if } t = \bar{t},\\
        a & \mbox{ otherwise.}
    \end{cases}  \]
    Let $q_h = \Phi^{-1}(b_h) \in \mathcal{E}$. As the collection $\{hWK\}_{h \in \langle \gamma_1,\dots,\gamma_k\rangle}$ is pairwise disjoint and $X$ is strongly irreducible with constant $K$, it follows that there exists $\hat{x} \in X$ such that for every $h \in \langle \gamma_1,\dots,\gamma_k\rangle$ then \[(h^{-1}\hat{x})|_{W}=q_h.\]
    By our definitions above, it follows that $c[\hat{x},\Id_{G},\bar{t}] = z$ and thus that 
    \begin{align*}
        \Phi\Bigg(\Bigl(\bigl(\psi_{\mathcal{E}}(\varphi_1)\bigr)(y_x)\Bigr)(\Id_{G})\Bigg)(\bar{t}) & = \varphi_1(c[\hat{x},\Id_{G},\bar{t}])(\Id_{F_k}) = \varphi_1(z)(\Id_{F_k}).\\
        \Phi\Bigg(\Bigl(\bigl(\psi_{\mathcal{E}}(\varphi_2)\bigr)(y_x)\Bigr)(\Id_{G})\Bigg)(\bar{t}) & = \varphi_2(c[\hat{x},\Id_{G},\bar{t}])(\Id_{F_k}) = \varphi_2(z)(\Id_{F_k}).
    \end{align*}
    Which implies that $\psi_{\mathcal{E}}(\varphi_1)\neq \psi_{\mathcal{E}}(\varphi_2)$, thus $\psi_{\mathcal{E}}$ is injective.\end{proof}



\subsection{Proof of~\Cref{thm:EmbeddingF2}}

The structure of this proof is very similar to the proof of~\Cref{thm:EmbeddingZ} with the caveat that in this case there might be no embedded copy of $F_k$ in $G$. This is fixed by coding instead a series of geometric moves according to some large set of generators as in the proof of~\Cref{prop:embed_Z_easy} and showing that with these moves a geometric inflated copy of $F_k$ can be realized. The main difficulty is that now we need the set of egg markers to have enough internal space in the yolk to codify this set of moves, while at the same time the set of moves grows as we take egg markers with larger support. Hence the need for strong TMP and for explicit bounds in the marker lemma.

Before going into the proof of~\Cref{thm:EmbeddingF2}, we will need to prove a technical result (\Cref{lem:fat_nonamenable}) which states that nonamenable groups can be ``inflated'' arbitrarily preserving their local structure in a coarse geometric way. To do this, we will need to recall a few classical notions.


Recall that a group $G$ is called \define{nonamenable} if there exists $T\Subset G$ and $\delta > 0$ such that for every $F\Subset G$ then \[ |FT\smallsetminus F| > \delta|F|.  \]
An elementary computation shows that if $G$ is nonamenable, then one can find a set $T_0\Subset G$ such that $|FT_0| > 2|F|$ for every $F \Subset G$ (see~\cite[Theorem 4.9.2]{ceccherini-SilbersteinC09}).

Let $G$ be a finitely generated group, $S\Subset G$ a symmetric set of generators and $m$ a positive integer. We say that $\Delta \subset G$ is
\begin{itemize}
    \item \define{$m$-separated} if for every distinct $g,h \in \Delta$ we have $d_S(g,h)\geq m$.
    \item \define{$m$-covering} if for every $h \in G$ there is $g \in \Delta$ with $d_S(g,h) \leq m$.
\end{itemize}
We remark that every maximal $m$-separated set $\Delta\subset G$ is necessarily $m$-covering. 

A \define{bipartite graph} is an undirected graph $\Gamma$ whose vertex set is the union of two disjoint sets $U$ and $V$ and all its edges are between elements of $U$ and elements of $V$. We say that $\Gamma$ is \define{locally finite} if the degree of every vertex is finite. Given a vertex $u$ in $\Gamma$, we denote by $\mathcal{N}(u)$ the set of all vertices adjacent to $u$ and for a set $A$ of vertices, we write $\mathcal{N}(A) = \bigcup_{u \in A}\mathcal{N}(u)$.

 A perfect matching is a bijection $\varphi \colon U \to V$ with the property that $(u,\varphi(u))$ is an edge for every $u \in U$. We say that $\Gamma$ satisfies \define{Hall's conditions} if for every $A\Subset U$ and $B \Subset V$ we have $|\mathcal{N}(A)|\geq |A|$ and $|\mathcal{N}(B)|\geq |B|$. Hall's matching theorem~\cite{MHall1948} states that a locally finite bipartite graph admits a perfect matching if and only if it satisfies Hall's conditions.

\begin{lemma}\label{lem:fat_nonamenable}
    Let $G$ be a finitely generated nonamenable group and let $S\Subset G$ be a symmetric generating set. There exists a constant $C\geq 1$ such that for every positive integer $m$ and $m$-covering subset $\Delta$ of $G$, there exists a bijection $\varphi \colon G \to \Delta$ such that $
    d_S(g,\varphi(g))\leq Cm$ for every $g\in G$. 
\end{lemma}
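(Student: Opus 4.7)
The plan is to apply Hall's matching theorem to the locally finite bipartite graph $\Gamma$ with parts $G$ and $\Delta$, in which $(g,\delta)$ is an edge precisely when $d_S(g,\delta) \leq Cm$, for a constant $C$ that I will choose depending only on $G$ and $S$. A perfect matching in $\Gamma$ is exactly a bijection $\varphi \colon G \to \Delta$ with $d_S(g,\varphi(g))\leq Cm$, so it is enough to verify both Hall conditions. The condition on the $\Delta$-side is immediate: for $B \Subset \Delta$, $\mathcal{N}(B) = B B(Cm) \supset B$ since $\Id_G \in B(Cm)$, so $|\mathcal{N}(B)|\geq |B|$. The real work is to estimate $|\mathcal{N}(A)|$ for $A \Subset G$.

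First I would use the observation preceding the lemma to fix $T_0 \Subset G$ with $|F T_0|\geq 2 |F|$ for every $F \Subset G$; iterating gives $|F T_0^k| \geq 2^k |F|$ for every $k \geq 1$. Letting $d_0 = \max\{|t|_S : t \in T_0\}$, we have $T_0^k \subset B(k d_0)$. I would set $C = d_0(2\log_2|S|+1)+1$, which depends only on $G$ and $S$, not on $m$ or $\Delta$. For $A \Subset G$, the $m$-covering property of $\Delta$ guarantees that every $h \in A B((C-1)m)$ lies within distance $m$ of some $\delta \in \Delta$, and any such $\delta$ satisfies $\delta \in A B(Cm)$, hence $\delta \in \mathcal{N}(A) = \Delta \cap A B(Cm)$. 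In particular $h \in \delta B(m) \subset \mathcal{N}(A) B(m)$, so
\[
|\mathcal{N}(A)| \cdot |B(m)| \;\geq\; |\mathcal{N}(A) B(m)| \;\geq\; |A B((C-1)m)|.
\]

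To close the loop, I would pick $k = \lceil \log_2 |B(m)| \rceil$ so that $2^k \geq |B(m)|$. The standard bound $|B(m)| \leq |S|^{m+1}$ gives $k \leq (m+1)\log_2|S|+1$, and the choice of $C$ forces $kd_0 \leq (C-1)m$ for every $m \geq 1$, so $T_0^k \subset B((C-1)m)$. Combined with the preceding display this yields
\[
|\mathcal{N}(A)| \cdot |B(m)| \;\geq\; |A B((C-1)m)| \;\geq\; |AT_0^k| \;\geq\; 2^k |A| \;\geq\; |B(m)| \cdot |A|,
\]
so $|\mathcal{N}(A)| \geq |A|$. Hall's theorem then produces the desired bijection.

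The main difficulty will be ensuring $C$ is independent of $m$; this is essentially tight and forces the use of nonamenability. The ball $B(Cm)$ must contain enough elements of $\Delta$ to accommodate a bijection from $A$, and since each element of $\Delta$ can be covered by at most $|B(m)|$ points of $G$ via $B(m)$, we need an expansion factor of order $|B(m)|$ between $A$ and $A B((C-1)m)$. The exponential expansion $|FT_0^k| \geq 2^k |F|$ afforded by nonamenability matches the upper bound $|B(m)| \leq |S|^{m+1}$ almost exactly, letting us take $k$ linear in $m$ and so $Cm/m$ bounded; if $G$ were merely infinite, no such uniform $C$ would be available in general.
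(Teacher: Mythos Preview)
Your proof is correct and follows essentially the same approach as the paper: set up the bipartite graph between $G$ and $\Delta$ with edges given by $d_S\leq Cm$, verify Hall's conditions using the exponential expansion $|FT_0^k|\geq 2^k|F|$ from nonamenability to beat the ball growth $|B(m)|$, and conclude via Hall's matching theorem. Your handling of the covering step is in fact slightly more careful than the paper's, since you work with $AB((C-1)m)$ to ensure the covering element of $\Delta$ actually lands in $AB(Cm)=\mathcal{N}(A)$; the paper writes the corresponding inequality directly for $AB(Cm)$, which strictly speaking needs this extra margin.
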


\begin{proof}
    For $n \geq 0$, denote by $B(n)$ the ball of radius $n$ with respect to the word metric generated by $S$. As $G$ is nonamenable, there exists $T_0\Subset G$  such that for every $F\Subset G$, we have $|FT_0|>2|F|$. Let $t_0$ such that $T_0 \subset B(t_0)$ and notice that $|FB(t_0)| \geq |FT_0|>2|F|$. In particular we obtain that 
    \[|FB(m t_0 \lceil \log_2(|S|)\rceil)| \geq |S|^m|F|.\]
    As $|B(m)| \leq |S|^m$, it follows that if we take $C = t_0\lceil \log_2(|S|)\rceil $ then  \[|FB(Cm)| \geq |B(m)||F|.\]
    
    Now consider the locally finite bipartite graph $\Gamma$, where the vertices are given by the disjoint union of $U=G$ and $V=\Delta$ and $\{g,h\}$ with $g \in G$ and $h \in \Delta$ is an edge if and only if $d_S(g,h)\leq Cm$. A map $\varphi$ which satisfies the requirements of the lemma is then given by a perfect matching in $\Gamma$. Thus it suffices to verify that $\Gamma$ satisfies Hall's conditions.

    For every $B\Subset\Delta$, the set $\mathcal{N}(B)\subseteq G$ is at least as large as $B$ because $(h,h)$ is an edge for every $h\in\Delta$. Now let $A \Subset G$ and remark that
    \[\mathcal{N}(A) = \Delta \cap \bigcup_{g \in A} gB(Cm) = \Delta \cap A B(Cm).\]
    Since $\Delta$ is $m$-covering, we have
    \[|\mathcal{N}(A)| = |\Delta \cap A B(Cm)| \geq \frac{|AB(Cm)|}{|B(m)|}.\]
    As $|AB(Cm)| \geq |B(m)||A|$, we obtain that $|\mathcal{N}(A)| \geq |A|$. Thus Hall's conditions are satisfied.\end{proof}

    The last ingredient is the fact that every nonamenable group contains a ``inflated geometric'' copy of the free group $F_2$. For this we use the following result of Whyte~\cite{whyte_amenability_1999} on the existence of translation-like actions of $F_2$ on nonamenable groups. We recall that a group $\Gamma$ acts translation-like on a metric space $(X,d)$ if the action is free and bounded, that is, for every $\gamma \in \Gamma$ we have that $\sup_{x \in X}d(x,\gamma\cdot x)<\infty$.

    \begin{theorem}[Theorem 6.1 of~\cite{whyte_amenability_1999}]\label{thm:Whyte1999}
        Let $G$ be a finitely generated group equipped with a word metric. Then $G$ is nonamenable if and only if $G$ admits a translation-like action by $F_2$.
    \end{theorem}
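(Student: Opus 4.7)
The plan is to prove both implications separately, with the interesting content concentrated in the ``only if'' direction.

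For the easy direction, assume $G$ admits a translation-like action by $F_2$. One wants to deduce nonamenability via Tarski's classical characterization: a group is nonamenable if and only if it admits a paradoxical decomposition. Since $F_2$ itself has a paradoxical decomposition obtained from the ping-pong lemma (partitioning $F_2$ into four pieces according to the first letter of the reduced word), and since the $F_2$-action on $G$ is free, every orbit is $F_2$-equivariantly bijective with $F_2$. Choosing a set of orbit representatives and transferring the paradoxical pieces orbit-by-orbit yields a paradoxical decomposition of $G$. The bounded displacement hypothesis ensures that each of the finitely many $F_2$-elements used in the decomposition is implemented on $G$ by a bijection with bounded displacement, so the decomposition is witnessed by self-maps of $G$ and Tarski's theorem applies.

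For the hard direction, assume $G$ is nonamenable, finitely generated with word metric $d_S$. The goal is to produce two bijections $\alpha,\beta \colon G \to G$ with bounded displacement such that the subgroup $\langle\alpha,\beta\rangle \leqslant \operatorname{Sym}(G)$ is free of rank two and acts freely on $G$. The strategy is to show that $G$ is bi-Lipschitz equivalent to a disjoint union of copies of the $4$-regular tree $T_4$ (the Cayley graph of $F_2$), and then pull back the tautological free $F_2$-action on $T_4$ through this equivalence. The natural action of $F_2$ on itself by left multiplication moves each point by $1$ in the word metric, so a bi-Lipschitz equivalence transports this to a free action on $G$ whose generators have bounded displacement, which is exactly a translation-like action by $F_2$.

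The main obstacle is to produce the bi-Lipschitz equivalence $G \to G \times F_2$ (equivalently, a tiling of $G$ by uniformly bounded ``$F_2$-tiles''). The tool is Hall's marriage theorem applied in the same spirit as in the proof of~\Cref{lem:fat_nonamenable}: nonamenability gives an exponential expansion inequality $|FB(Cm)| \geq |B(m)|\cdot |F|$ for every $F \Subset G$, which is exactly the condition needed to verify Hall's criterion for a bipartite graph linking $G$ to $G \times \{1,\dots,k\}$ with edges permitted only between vertices at bounded distance. Applying Hall's theorem repeatedly with $k$ growing to capture the exponential branching of $F_2$, and taking care at each step that matchings remain uniformly bounded in displacement and are mutually compatible, yields the desired bi-Lipschitz equivalence. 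The delicate technical point is organizing these matchings coherently so that one obtains a group action and not merely a pair of loosely related bijections; this is achieved in Whyte's original argument by working level-by-level along balls in the Cayley graph of $F_2$ and extending matchings compatibly as one moves outward.
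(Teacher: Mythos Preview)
The paper does not prove this statement: \Cref{thm:Whyte1999} is quoted verbatim from Whyte~\cite{whyte_amenability_1999} and used as a black box in the proof of \Cref{lem:fat_whyte}. There is therefore no ``paper's own proof'' to compare against; you have written a sketch of an argument where the authors simply cite the literature.

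As for the sketch itself, the easy direction is fine in spirit, though invoking ``Tarski's theorem'' is slightly off: Tarski's criterion concerns paradoxical decompositions by left translations of $G$, whereas what you obtain from a translation-like $F_2$-action is a paradoxical decomposition by bounded-displacement bijections (wobbling bijections). That still suffices to contradict amenability, but via the F{\o}lner characterization or the metric/coarse version of the paradox, not Tarski's statement as usually phrased.

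The hard direction is where your outline drifts. The sentence ``$G$ is bi-Lipschitz equivalent to $G \times F_2$'' cannot be what you mean; you presumably intend that $G$ admits a partition into $F_2$-orbits with uniformly bounded edge lengths, i.e.\ a spanning $4$-regular forest in some power of the Cayley graph. Whyte's actual route is: use nonamenability and a Hall-type argument to produce a bounded $2$-to-$1$ map $G\to G$, which yields a $4$-regular graph on $G$ with bounded edges; then show that any $4$-regular graph with positive Cheeger constant admits a spanning subgraph each of whose components is a $4$-regular tree; finally orient and label edges to get a free $F_2$-action. Your description of ``applying Hall's theorem repeatedly with $k$ growing'' and ``working level-by-level along balls in the Cayley graph of $F_2$'' does not match this and, as stated, does not produce a coherent group action---the step you flag as ``delicate'' is in fact the entire content, and you have not indicated how to carry it out.
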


    In the next result, we use $d_{F_k}$ to denote the canonical word metric in $F_k$.

    \begin{lemma}\label{lem:fat_whyte}
        Let $G$ be a finitely generated nonamenable group,  $S\Subset G$ be a symmetric generating set, and $k\geq 1$. There exists a constant $N\geq 1$ such that for every positive integer $m$, there exists a sequence $(\gamma_u)_{u \in F_k}$ of elements of $G$ which satisfies the following properties:
        \begin{enumerate}
            \item The collection $(\gamma_u B_S(m))_{u \in F_k}$ is pairwise disjoint.
            \item $d_S(\gamma_{u},\gamma_v) \leq (4m+2+d_{F_k}(u,v))N$ for every $u,v \in F_k$
        \end{enumerate}
    \end{lemma}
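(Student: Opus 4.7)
The plan is to combine Whyte's translation-like action theorem (\Cref{thm:Whyte1999}) with the inflation lemma (\Cref{lem:fat_nonamenable}): the former furnishes a coarsely embedded copy of $F_k$ in $G$ with the correct linear distance bound in $d_{F_k}$, while the latter lets us push those points to a maximally separated subset of $G$ so that the surrounding balls of radius $m$ are automatically disjoint.

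First, \Cref{thm:Whyte1999} yields a translation-like action of $F_2$ on $(G, d_S)$. Since $F_k$ embeds as a subgroup of $F_2$ for every $k \geq 1$, restricting this action gives a translation-like action $F_k \curvearrowright G$. Boundedness on the finite generating set $\{a_1, \ldots, a_k\}$ of $F_k$ together with the triangle inequality produces a constant $N_0$, depending only on $G$ and on the chosen action but not on $m$, such that $d_S(g, u \cdot g) \leq N_0 \cdot |u|_{F_k}$ for every $g \in G$ and every $u \in F_k$. Fixing a basepoint $g_0 \in G$, the freeness of the action implies that the points $\{u \cdot g_0 : u \in F_k\}$ are pairwise distinct, and the above estimate yields $d_S(u \cdot g_0, v \cdot g_0) \leq N_0 \cdot d_{F_k}(u,v)$.

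Next, to secure the disjointness of the balls, I fix a maximal $(2m+1)$-separated subset $\Delta \subset G$ (which is automatically $(2m+1)$-covering) and apply \Cref{lem:fat_nonamenable} with parameter $2m+1$ in place of $m$. This produces a bijection $\varphi \colon G \to \Delta$ and an absolute constant $C \geq 1$, independent of $m$, satisfying $d_S(h, \varphi(h)) \leq C(2m+1)$ for every $h \in G$. I then define $\gamma_u = \varphi(u \cdot g_0)$ for each $u \in F_k$.

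The verification is then routine. Since $\varphi$ is a bijection and the points $u \cdot g_0$ are pairwise distinct, the $\gamma_u$ are pairwise distinct elements of $\Delta$; as $\Delta$ is $(2m+1)$-separated, $d_S(\gamma_u, \gamma_v) \geq 2m+1$ whenever $u \neq v$, giving $\gamma_u B_S(m) \cap \gamma_v B_S(m) = \varnothing$, which is condition (1). For condition (2), a triangle inequality through the basepoint orbit yields
\[
d_S(\gamma_u, \gamma_v) \leq d_S(\gamma_u, u\cdot g_0) + d_S(u\cdot g_0, v\cdot g_0) + d_S(v\cdot g_0, \gamma_v) \leq 2C(2m+1) + N_0 \cdot d_{F_k}(u,v),
\]
so setting $N = \max(C, N_0, 1)$ gives the required inequality $d_S(\gamma_u, \gamma_v) \leq N(4m+2+d_{F_k}(u,v))$. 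The only subtle point, which is really the heart of the argument, is to observe that both $C$ (coming out of \Cref{lem:fat_nonamenable}) and $N_0$ (coming from the translation-like action) are absolute constants of $G$, uniform in $m$; this is what makes the final constant $N$ independent of the scale, and it is precisely the reason we must separate the roles of Whyte's action (which controls distances in $d_{F_k}$) and the inflation lemma (which handles the disjointness of balls).
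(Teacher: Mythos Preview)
Your proof is correct and follows essentially the same approach as the paper: invoke Whyte's theorem to get a translation-like $F_k$-action with Lipschitz constant $N_0$ (the paper calls it $L$), push the orbit of a basepoint to a maximal $(2m+1)$-separated set via \Cref{lem:fat_nonamenable}, and combine the two bounds by the triangle inequality with $N=\max(C,N_0)$. The only cosmetic differences are your choice of an arbitrary basepoint $g_0$ rather than $\Id_G$ and the explicit inclusion of $1$ in the maximum defining $N$.
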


    \begin{proof}
        By~\Cref{thm:Whyte1999}, $G$ admits translation-like action by $F_2$. As $F_k$ embeds into $F_2$ for every $k \geq 1$, it follows that $G$ also admits a translation-like action by $F_k$. For $u \in F_k$ denote $g_u = u \cdot \Id_{G}$ the element obtained by acting by $u$ on the identity of $G$, thus $\{g_u\}_{u \in F_k}$ denotes the orbit of the identity.

        As the above action is free, it follows that $\{g_u\}_{u \in F_k}$ does not repeat elements. Furthermore, as the action is bounded, there exists a constant $L\geq 1$ such that \[ d_S(g_u,g_v) \leq Ld_{F_k}(u,v).  \]

        Fix $m$ and let $\Delta$ be a maximal and $(2m+1)$-separated subset of $G$. An elementary argument yields that that $\Delta$ is also $(2m+1)$-covering and thus by~\Cref{lem:fat_nonamenable} there exists $C\geq 1$ and a bijection $\varphi\colon G \to \Delta$ such that $d_S(g,\varphi(g))\leq C(2m+1)$ for every $g\in G$. 

        For $u \in F_k$, let $\gamma_u = \varphi(g_u)$ and take $N = \max(C,L)$. As $\{g_u\}_{u \in F_k}$ does not repeat elements and $\varphi$ is a bijection, it follows that $\{ \gamma_u\}_{u \in F_k} $ does not repeat elements either. Furthermore, as each $\gamma_u \in \Delta$ and $\Delta$ is $(2m+1)$-separated, it follows that $\{\gamma_u B_S(m)\}_{u \in F_k}$ is pairwise disjoint. 
        
        Finally, the triangular inequality yields that for $u,v \in F_k$,
    \begin{align*}
        d_S(\gamma_u,\gamma_v) & \leq d_S(g_u,\gamma_u) + d_S(g_u,g_v) + d_S(g_v,\gamma_v)\\
        & \leq C(2m+1) + L d_{F_k}(u,v) + C(2m+1)\\
        & \leq (4m+2+d_{F_k}(u,v))N. \qedhere
    \end{align*}
    \end{proof}

    \begin{theorem}\label{thm:embeddingF2_finitely_gen}
    Let $G$ be a finitely generated group and $X$ be a non-trivial strongly irreducible $G$-subshift which satisfies the strong TMP. For every finite alphabet $A$: \begin{enumerate}[(1)]
        \item If $G$ is infinite, then $\Aut(A^{\ZZ})$ embeds into $\Aut(X)$.
        \item If $G$ is nonamenable, then $\Aut(A^{F_k})$ embeds into $\Aut(X)$ for every $k \geq 1$.
    \end{enumerate}
\end{theorem}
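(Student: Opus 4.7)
The plan is to adapt the conveyor-belt construction from the proof of Theorem~\ref{thm:embedding_Z_countable}, treating both parts uniformly by setting $k=1$ and identifying $\ZZ = F_1$ in part~(1). The algebraic copy of $F_k$ inside $G$ used there (which may no longer exist) is replaced by a geometric copy of $F_k$, and the egg markers used are those produced by Lemma~\ref{lem:existence-of-linear-egg-markers} under the strong TMP hypothesis. The key new feature is that the ``pointers'' between neighbouring eggs are no longer fixed elements of $G$ but variable data encoded inside each egg; the linear bound $74r$ on the egg marker support from Lemma~\ref{lem:existence-of-linear-egg-markers} is what makes this encoding possible.

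For part~(2), apply Lemma~\ref{lem:fat_whyte} to obtain, for every large $m$, a family $(\gamma_u)_{u\in F_k}$ in $G$ with $(\gamma_u B(m))_{u\in F_k}$ pairwise disjoint and $d_S(\gamma_u,\gamma_v)\leq (4m+2+d_{F_k}(u,v))N$ for a constant $N$ depending only on $G$. For part~(1) the corresponding family $(\gamma_n)_{n\in\ZZ}$ is produced directly from a bi-infinite geodesic in the Cayley graph of $G$ (which exists by a K\"onig-type argument since $G$ is infinite and the Cayley graph is locally finite) by sampling it every $m$ steps. Taking $m$ slightly larger than $74r$ makes $(\gamma_u B(74r))_{u\in F_k}$ a $K$-disjoint family for $r$ large, and the displacements $\gamma_u^{-1}\gamma_{u a_i^{\pm1}}$ lie in a ball $B(D)$ with $D=D(r)$ linear in $r$.

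Now let $\mathfrak{T}=\{\top,\bot\}^k$ and consider the pointer-tape alphabet $\mathcal{B} = B(D)^{2k}\times A^{\mathfrak{T}}$. By Proposition~\ref{prop:lenguaje-de-un-subshit-SI-tiene-muchos-elementos}, $|L_{B(r)}(X)|\geq 2^{|B(r)|/(2|K|)}$, which dominates $|\mathcal{B}|$ for $r$ large in every regime: doubly exponential versus singly exponential when $G$ has exponential growth, and singly exponential versus polynomial when $G$ has polynomial growth (and similarly in intermediate growth). Lemma~\ref{lem:existence-of-linear-egg-markers} therefore yields, for some large $r$, a collection $\mathcal{E}$ of $(B(2r),B(74r))$-egg markers realizing $L_{B(r)}(X)$ into which we fix an injection $\Phi\colon \mathcal{B}\hookrightarrow\mathcal{E}$. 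From here the argument follows the proof of Theorem~\ref{thm:embedding_Z_countable} essentially verbatim: each egg at position $g$ carries, through $\Phi^{-1}$, pointers $\mathfrak{b}_i(x(g)),\mathfrak{f}_i(x(g))\in B(D)$ and $2^k$ track symbols; forward/backward $i$-consistency at $g$ requires an egg at $g\cdot\mathfrak{f}_i(x(g))$ (respectively at $g\cdot\mathfrak{b}_i(x(g))$) together with the matching inverse-pointer condition; the permutations $f_{i,x}$ induce a right action of $F_k$ on $G\times\mathfrak{T}$; and reading symbols along orbits yields tape words $c[x,g,t]\in A^{F_k}$. The map $\psi_{\mathcal{E}}\colon \Aut(A^{F_k})\to \Aut_{\mathcal{E}}(\eta_{\mathcal{E}}(X))$ that acts inside each egg by running the given automorphism on its $2^k$ tapes, composed with Lemma~\ref{lem:lema_maestro_de_los_reemplzos}, yields the desired embedding into $\Aut(X)$. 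Injectivity is witnessed by a configuration built via Lemma~\ref{lem:K-irreducible-mainlemma} with eggs planted at each $\gamma_u$, pointers set to $\gamma_u^{-1}\gamma_{u a_i^{\pm1}}$, and one distinguished track encoding an arbitrary $z\in A^{F_k}$. The main obstacle is precisely this counting step: it is the linear bound on the egg marker support from Theorem~\ref{thm:markers}, combined with the strong-irreducibility language lower bound from Proposition~\ref{prop:lenguaje-de-un-subshit-SI-tiene-muchos-elementos}, that leaves enough room inside $\mathcal{E}$ for the variable-length pointer alphabet to fit.
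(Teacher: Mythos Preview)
Your proposal is correct and follows essentially the same approach as the paper: encode variable pointers and $2^k$ tape symbols inside the linearly-sized egg markers of Lemma~\ref{lem:existence-of-linear-egg-markers}, run the conveyor-belt construction of Theorem~\ref{thm:embedding_Z_countable} with these data-carrying pointers, and witness injectivity via the family $(\gamma_u)$ from Lemma~\ref{lem:fat_whyte} in the nonamenable case or a sampled bi-infinite geodesic when $k=1$. One minor difference worth noting is the counting step: the paper first disposes of the virtually-$\ZZ$ case by deferring to Theorem~\ref{thm:EmbeddingZ} and then uses the crude bound $\log|B(D)|\leq D\log|S|$ under a superlinear-growth assumption, whereas your direct comparison of $2^{|B(r)|/(2|K|)}$ against $|B(D(r))|^{2k}$ works uniformly across all growth types (via submultiplicativity $|B(cr)|\leq|B(r)|^{\lceil c\rceil}$, so one only needs $f(r)\gg\log f(r)$) and thereby avoids that case split.
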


    \begin{proof}
        Let $K$ be a strong irreducibility constant for $X$. Fix a symmetric set of generators $S\Subset G$ with the property that $K \subset S$. For $n \geq 0$, let $B(n)$ denote the ball of radius $n$ with respect to the word metric generated by $S$ and note that $K\subset B(1)$. 
        
        In what follows, we shall briefly split the proof in two cases and then unify both branches.

        \textbf{Case 1}: if $G$ is infinite and amenable, we fix $k = 1$ and show that $\Aut(A^{F_k})$ embeds into $\Aut(X)$. Note that if the map $n \mapsto |B(n)|$ has linear growth, then $G$ is virtually $\ZZ$ and thus the conclusion follows from~\Cref{thm:EmbeddingZ}. Therefore, we may assume that the map $n \mapsto |B(n)|$ is superlinear. We choose $N=1$. 

        \textbf{Case 2}: if $G$ is nonamenable, we let $k \geq 1$ be an arbitrary positive integer and show that $\Aut(A^{F_k})$ embeds into $\Aut(X)$. In this case, the map $n \mapsto |B(n)|$ has exponential growth. We choose $N\geq 1$ as the constant from~\Cref{lem:fat_whyte} associated to $G$ and $S$.

        In both cases we can choose a positive integer $r$ such that:

        \begin{enumerate}[(a)]
        \item There is a set of $(B(2r),B(74r))$-egg markers for $X$ which realizes $L_{B(r)}(X)$.
        \item $\log(|L_{B(r)}(X)|) \geq 2^k\log\left(|A|\right) + 2k\log\left(|B((296r+7)N )|\right)$.
    \end{enumerate}

    By~\Cref{lem:existence-of-linear-egg-markers}, condition (a) holds for all $r$ large enough. For condition (b), notice that \[2^k\log\left(|A|\right) + 2k\log\left(|B((296r+7)N )|\right) \leq 2^k \bigl( \log( |A|) + (296r+7)N\log(|S|)\bigr).\] By~\Cref{prop:lenguaje-de-un-subshit-SI-tiene-muchos-elementos} and the fact that $X$ is non-trivial, we have that \[   
        \log(|L_{B(r)}(X)|) \geq \frac{|B(r)|\log(|L_{\{\Id_G\}}(X)|)}{2|K|} \geq \frac{|B(r)|\log(2)}{2|K|}.
    \]
    Thus it suffices to argue that for $r$ large enough then \[ |B(r)| \geq \frac{2^{k+1}|K| }{\log(2)} \bigl( \log( |A|) + (296r+7)N\log(|S|)\bigr). \]
    Notice that every constant is fixed except $r$. As the right hand side grows linearly with $r$, and in both cases the map $r\mapsto |B(r)|$ is superlinear, it follows that (b) holds for large enough $r$.

    Now we consider both cases at the same time. Consider the \define{set of tracks} $\mathfrak{T} = \{\top,\bot\}^k$. For $t = (t_i)_{1 \leq i \leq k} \in \mathfrak{T}$ and $j \in \{1,\dots,k\}$ we define the $j$-th flip $\rho_j(t)$ of $t$ by changing its $j$-th coordinate, namely \[ \bigl(\rho_j(t)\bigr)_i = \begin{cases} t_i & \mbox{ if } i \neq j\\
    \top  & \mbox{ if } i = j \mbox{ and } t_j = \bot\\ 
      \bot  & \mbox{ if } i = j \mbox{ and } t_j = \top.
    \end{cases}  \]

    Let $D = B((296r+7)N)$ be the set of \define{directions} and let $\mathcal{B}= D^{2k} \times A^{\mathfrak{T}}$. By condition (a) there exists a set $\mathcal{E}'$ of $(B(2r),B(74r))$-egg markers for $X$ which realizes $L_{B(r)}(X)$ (thus $|\mathcal{E}'| \geq |L_{B(r)}(X)|$), and by condition (b) we have that \[|\mathcal{E}'| \geq |L_{B(r)}(X)| \geq |A|^{2^k}|D|^{2k} = |\mathcal{B}|.\]
     Thus we may extract a subset of egg markers $\mathcal{E}\subset \mathcal{E}'$ with $|\mathcal{E}|=|\mathcal{B}|$ and fix a bijection $\Phi \colon \mathcal{E} \to \mathcal{B}$. 

     Let $A_{\mathcal{E}} = \mathcal{E}\cup \{\star\}$ and consider the map $\eta_{\mathcal{E}} \colon X \to (A_{\mathcal{E}})^G$ given by \[ \eta_{\mathcal{E}}(x)(g) = \begin{cases}
        g^{-1}x|_{W} & \mbox{if } g^{-1}x|_{W} \in \mathcal{E}\\
        \star & \mbox{otherwise.}
    \end{cases}  \]
     
    Thus $\eta_{\mathcal{E}}(X)$ is the egg-model associated to $(X,\mathcal{E})$. Since by~\Cref{lem:lema_maestro_de_los_reemplzos} the group of egg automorphisms $\Aut_{\mathcal{E}}(\eta_{\mathcal{E}}(X))$ embeds into $\Aut(X)$, it suffices to show that $\Aut(A^{F_k})$ embeds into $\Aut_{\mathcal{E}}(\eta_{\mathcal{E}}(X))$

    Given $d \in D^{2k}$, we write $d= (d_1^{-1},d_1^{+1},\dots,d_k^{-1},d_k^{+1})$. Furthermore, for $q \in \mathcal{E}$ such that $\Phi(q) = (d,w) \in D^{2k} \times A^{\mathfrak{T}}$, $i \in \{1,\dots,k\}$ and $t \in \mathcal{T}$ we write 
     \[ \delta(q) = d, \quad \mathfrak{b}_i(q) = d_i^{-1}, \quad \mathfrak{f}_i(q) = d_i^{+1}, \mbox{ and } t(q) = w(t).\]

    In order to ease the notation, given $x \in X$ and $g \in G$, we will write $y_x$ to denote $\eta_{\mathcal{E}}(x)$. We will say that $x$ at $g$ is
    
    \begin{enumerate}
        \item \define{forward $i$-consistent}, if $y_x(g) \in \mathcal{E}$, $y_x(g \mathfrak{f}_i\bigl( y_x(g)\bigr)   ) \in \mathcal{E}$ and furthermore, \[    \bigl( \mathfrak{f}_i( y_x(g)) \bigr)^{-1} = \mathfrak{b}_i(y_x(g \mathfrak{f}_i( y_x(g)))   ).  \]
        \item \define{backward $i$-consistent}, if $y_x(g) \in \mathcal{E}$, $y_x(g \mathfrak{b}_i\bigl( y_x(g)\bigr)   ) \in \mathcal{E}$ and furthermore, \[    \bigl( \mathfrak{b}_i( y_x(g)) \bigr)^{-1} = \mathfrak{f}_i(y_x(g \mathfrak{b}_i( y_x(g)))   ).  \]
    \end{enumerate}

    Now for $x \in X$ and $i \in \{1,\dots,k\}$ we let the $i$-th conveyor belt map $f_{i,x} \in \operatorname{Sym}(G\times \mathfrak{T})$ be given on $(g,t) \in G\times \mathfrak{T}$ by

     \[f_{i,x}(g,t) = \begin{cases}
            (g\mathfrak{f}_i(x(g)),t) & \mbox{ if } t_i = \top \mbox{ and } p \mbox{ at } g \mbox{ is forward $i$-consistent},\\
            (g\mathfrak{b}_i(x(g)),t) & \mbox{ if } t_i = \bot \mbox{ and }  p \mbox{ at } g \mbox{ is backward $i$-consistent},\\
            (g,\rho_i(t)) & \mbox{ otherwise}.
        \end{cases}
    \]

    Similarly as in the proof of~\Cref{thm:EmbeddingZ}, the maps $f_{i,x}$ are invertible and thus are truly elements of $\operatorname{Sym}(G\times \mathfrak{T})$, thus they induce a right $F_k$ action on $G\times \mathfrak{T}$ by letting the generator $a_i$ act by $f_{i,x}$. For $u \in F_k$ and $(g,t) \in G \times \mathfrak{T}$,  write $\xi_x(g,t,u)$ for the element of $G \times \mathfrak{T}$ obtained by this action. 
    
    For $x \in X$, $g\in G$ and $t \in \mathfrak{T}$ such that $y_x(g) \in \mathcal{E}$, we define $c[x,g,t]\in A^{F_k}$ by \[c[x,g,t](u) = \Phi\bigl(y_x(h)\bigr)(t') \mbox{ for every } u \in F_k, \mbox{ where }(h,t') = \xi_x(g,t,u).\]

    A straightforward computation shows that for $x \in X$, $g,h \in G$ and $t \in \mathfrak{T}$ we have $c[hx,g,t]=c[x,h^{-1}g,t]$.

    Next we define a map $\psi_{\mathcal{E}}\colon \Aut(A^{F_k}) \to \Aut_{\mathcal{E}}( \eta_{\mathcal{E}}(X))$. For $\varphi \in  \Aut(A^{F_k})$, $x \in X$ and $g \in G$ we let \[ \Bigl(\bigl(\psi_{\mathcal{E}}(\varphi)\bigr)(y_x)\Bigr)(g) = \begin{cases}
        \Phi^{-1}\Bigl( \delta(y_x(g)) ,\bigl(\varphi(c[x,g,t]])(\Id_{F_k})\bigr)_{t \in \mathfrak{T}}\Bigr) & \mbox{ if } y_x(g) \in \mathcal{E},\\
        \star & \mbox{ if } y_x(g) = \star.
    \end{cases}  \]

    By~\Cref{lem:egg_exchanges_well_defined} it follows that for every $\varphi \in \Aut(A^{F_k})$ and $x \in X$, then $\bigl(\psi_{\mathcal{E}}(\varphi)\bigr)(y_x) \in \eta_{\mathcal{E}}(X)$. It is clear by definition that $\psi_{\mathcal{E}}(\varphi)$ is continuous, and its $G$-equivariance follows from the fact that $c[hx,g,t]=c[x,h^{-1}g,t]$, thus $\psi_{\mathcal{E}}(\varphi) \in \operatorname{End}(\eta_{\mathcal{E}}(X))$. Furthermore, $\psi_{\mathcal{E}}$ induces an homomorphism from $\Aut(A^{F_k})$ to $\operatorname{End}(\eta_{\mathcal{E}}(X))$, and thus in fact $\psi_{\mathcal{E}}(\varphi) \in \Aut(\eta_{\mathcal{E}}(X))$. Finally, as the position of stars is preserved, each $\psi_{\mathcal{E}}(\varphi)$ is in fact an egg automorphism and thus we conclude that $\psi_{\mathcal{E}}\colon \Aut(A^{F_k}) \to \Aut_{\mathcal{E}}( \eta_{\mathcal{E}}(X))$ is a homomorphism. 
    
    Let us show that $\psi_{\mathcal{E}}$ it is injective. Let $\varphi_1,\varphi_2$ be distinct automorphisms on $\Aut(A^{F_k})$ and $z \in A^{F_k}$ such that $\varphi_1(z)(\Id_{F_k}) \neq \varphi_2(z)(\Id_{F_k})$. For the next part we split the analysis again into the two cases.
    
    In case (1), as $G$ is infinite, there exists a bi-infinite geodesic $(g_i)_{i \in \ZZ}$ (that is, such that for $i,j\in \ZZ$, $|g_i^{-1}g_j|_S=|i-j|$. For $i \in \ZZ$, choose $\gamma_i = g_{(148r+2)i}$ and notice that the collection $ \{\gamma_i B(74r)K \}_{i \in \ZZ}$ is pairwise disjoint as $B(74r)K \subset B(74r+1)$, and moreover, $|\gamma_{i}^{-1}\gamma_{i-1}|=|\gamma_{i}^{-1}\gamma_{i+1}| = 148r+2 \leq (296r+6)N$, thus both $\gamma_{i}^{-1}\gamma_{i-1}$ and $\gamma_{i}^{-1}\gamma_{i+1}$ are in $D$. Through the canonical identification $\ZZ \cong F_1$, we rewrite the collection as $ \{\gamma_u \}_{u \in F_1}$. 

    In case (2), we use~\Cref{lem:fat_whyte} with $m = 74r+1$ and obtain a collection $\{\gamma_u\}_{u \in F_k}$ of elements of $G$ which satisfies that $(\gamma_u B(74r+1))_{u \in F_k}$ is pairwise disjoint, and $|\gamma_{u}^{-1}\gamma_v| \leq (296r+6+d_{F_k}(u,v))N$ for every $u,v \in F_k$. In particular, it follows that $(\gamma_u B(74r)K)_{u \in F_k}$ is pairwise disjoint and that for any $i \in \{1,\dots,k\}$ and generator $a_i$ of $\mathcal{B}$, we have \[|\gamma_{u}^{-1}\gamma_{ua_i}| \leq (296r+7)N \mbox{ and } |\gamma_{u}^{-1}\gamma_{ua_i^{-1}}| \leq (296r+7)N.\]
    Therefore both $\gamma_{u}^{-1}\gamma_{ua_i^{-1}}$ and $\gamma_{u}^{-1}\gamma_{ua_i}$ are in $D$.

    Now we continue with both cases at the same time. Fix some $b \in A$. For $u \in F_k$ we let $(d_u,w_u) \in D^{2k}\times A^{\mathfrak{T}}$ be given by
    \begin{align*}
        d_u & = \bigl( \gamma_{u}^{-1}\gamma_{ua_1^{-1}}, \gamma_{u}^{-1}\gamma_{ua_1}, \dots, \gamma_{u}^{-1}\gamma_{ua_k^{-1}} , \gamma_{u}^{-1}\gamma_{ua_k}\bigr),\\
        w_u(t) & = \begin{cases}
           z(u) & \mbox{if } t = (\top,\dots,\top).\\
           b & \mbox{otherwise.}
        \end{cases}   
    \end{align*}
    
    For $u \in F_k$, set $q_u = \Phi^{-1}(d_u,w_u)$. As $X$ is strongly irreducible and $\{\gamma_u B(74r)K \}_{u \in F_k}$ is pairwise disjoint, there exists $x \in X$ such that $(\gamma_u^{-1}x)|_{B(74r)} = q_u$ for every $u$. By construction, it follows that $c[x,\gamma_{\Id_{F_k}},(\top,\dots,\top)] = z$ and thus \[  \Bigl(\bigl(\psi_{\mathcal{E}}(\varphi_1)\bigr)(y_x)\Bigr)(\gamma_{\Id_{F_k}}) \neq \Bigl(\bigl(\psi_{\mathcal{E}}(\varphi_2)\bigr)(y_x)\Bigr)(\gamma_{\Id_{F_k}}).\]
    Therefore $\psi_{\mathcal{E}}(\varphi_1)\neq \psi_{\mathcal{E}}(\varphi_2)$ which shows that $\psi_{\mathcal{E}}$ is injective. We conclude that $\Aut(A^{F_k})$ embeds into the group of egg automorphism $\Aut_{\mathcal{E}}( \eta_{\mathcal{E}}(X))$ and thus into $\Aut(X)$.\end{proof} 

   \begin{proof}[Proof of~\Cref{thm:EmbeddingF2}]
   Let $K$ be the strong irreducibility constant of $X$. If $G$ is not locally finite, we let $H\leqslant G$ be an infinite finitely generated subgroup which contains $K$. If $G$ is furthermore nonamenable, we take $H\leqslant G$ as a nonamenable finitely generated subgroup which contains $K$ (note that locally amenable groups are amenable \cite[Corollary 4.5.11]{ceccherini-SilbersteinC09}). 

   Let $X|_H = \{ x|_H : x \in X\}$ and notice that it is also non-trivial. By~\Cref{prop:basic-things-SI} we have that $K$ is also a strong irreduciblity constant for $X|_H$, that $X|_H$ has the strong TMP and that $\Aut(X|_H)$ embeds into $\Aut(X)$. The result follows by applying~\Cref{thm:embeddingF2_finitely_gen} to $X|_H$.\end{proof}

\section{Questions and perspectives}\label{sec:questions}

As witnessed by the results of Hochman~\cite{Hochman2010}, our version of Ryan's theorem does not cover all cases where the conclusion holds. 

\begin{question}
Can the hypothesis of strong irreducibility in~\Cref{thm:Ryan} be significantly weakened?
\end{question}

A more specific question is whether the same conclusion holds for transitive SFTs with positive entropy on amenable groups (cf.~\cite[Theorem 1.2]{Hochman2010}). 

There is also in the literature a finitary version of Ryan's theorem by Salo~\cite{salo2019finitaryryan} (see also~\cite{Kopra2020_finitaryRyan}) that states that there exists a finite set $F$ of automorphisms of $\{0,1,2,3\}^{\ZZ}$ such that if $\varphi \in \{0,1,2,3\}^{\ZZ}$ commutes with every element of $F$, then $\varphi$ is a power of the shift. We do now know if similar results hold for arbitrary groups.

\begin{question}
Can a finitary version of Ryan's theorem be established for actions of infinite groups?
\end{question}

All of the results in~\Cref{sec:embedding} are about embedding the automorphism group of a full shift on a free group. A natural question is if this can be extended to strongly irreducible subshifts and to groups which are not free.

\begin{question}
Under which conditions do the automorphism groups of two strongly irreducible subshifts (on the same group) embed into each other?
\end{question}

In particular, it would be interesting to know when the automorphism group of a strongly irreducible $F_k$-subshift embeds into $\Aut(\{0,1\}^{F_k})$. A natural condition in the previous question (at least when the group is residually finite) would be to ask for dense periodic orbits and finite type conditions.

\begin{question}
Let $G$ be an infinite non-locally finite group. Do $\Aut(\{0,1\}^G)$ and $\Aut(\{0,1,2\}^G)$ embed into each other?
\end{question}

The main issue with this question is that the conveyor belt technique cannot be extended naturally beyond (virtually) free groups. We are not aware of alternative techniques to tackle this question on general groups.

Next we wonder whether the hypothesis of strong TMP is truly required for proving~\Cref{thm:EmbeddingF2}.

\begin{question}
Can the strong TMP condition in~\Cref{thm:EmbeddingF2} be removed or weakened?
\end{question}

We suspect that at least in the non-locally finite case the answer might be positive. Indeed, let us say that a countable group $G$ has property $(\heartsuit)$ if there exists $M \geq 1$ such that for every $F\Subset G$, there is $S\Subset G$ with $|S|\leq M$ and $(g_i)_{i \in \ZZ}$ such that the collection $\{g_iF\}_{i \in \ZZ}$ is pairwise disjoint and $g_i^{-1}g_{i+1} \in S$ for all $i \in \ZZ$.

From~\Cref{lem:fat_free_group} it follows that a group $G$ has a torsion free element if and only if it satisfies property $(\heartsuit)$ with $M=1$. It is not hard to see that a torsion group with property $(\heartsuit)$ would satisfy the embedding theorem  without the hypothesis of strong TMP (In the proof of~\Cref{thm:embeddingF2_finitely_gen}, one takes instead as a set of directions $\{1,\dots,M\}$ and interprets them accordingly depending on the marker). However, we do not know of any examples of torsion groups which satisfy property $(\heartsuit)$.

In Theorems~\ref{thm:EmbeddingZ} and~\ref{thm:EmbeddingF2} we spent some energy showing that the automorphism group of a full $F_2$-shift embeds into another automorphism group. However, our efforts would have been rather useless if in fact $\Aut(\{0,1\}^{F_2})$ embeds into $\Aut(\{0,1\}^{\ZZ})$. 

\begin{question}
Does $\Aut(\{0,1\}^{F_2})$ embed into $\Aut(\{0,1\}^{\ZZ})$?
\end{question}

The main problem when tackling this problem is that besides the well-known obstructions that subgroups of $\Aut(\{0,1\}^{\ZZ})$ must be countable, residually finite, and (if finitely generated) their word problem is in $\textbf{co-\texttt{NP}}$~\cite{salo2022conjugacyreversiblecellularautomata}, we don't have any other tools to rule out potential subgroups of $\Aut(\{0,1\}^{\ZZ})$. Clearly the first two conditions are not useful to this endeavor, and we do not know enough about subgroups of $\Aut(\{0,1\}^{F_2})$ to know if the third condition can be applied.

\Addresses
\bibliographystyle{abbrv}
\bibliography{ref}
\end{document}